\documentclass[a4paper,10pt,reqno]{amsart}

\usepackage[UKenglish]{babel}
\usepackage{amsmath}
\usepackage{amssymb}
\usepackage{amsthm} 
\usepackage{mathtools}
\usepackage{verbatim,comment}
\usepackage{amssymb}
\usepackage[header,title]{appendix}
\usepackage{cite}	
\usepackage{dsfont}

\usepackage[backref=page]{hyperref} 
\hypersetup{
    colorlinks=true,
    linkcolor=blue,
    citecolor=red,
    filecolor=magenta,      
    urlcolor=cyan,
    linktocpage=true,
}

\usepackage[utf8]{inputenc}

\usepackage{bbm}
\usepackage{verbatim, stackrel}

\usepackage[shortlabels]{enumitem}
\usepackage{marginnote}

\usepackage{tikz}
\usepackage{tikz-cd}

\newcommand{\bbC}{\mathbb{C}}

\newcommand{\bbH}{\mathbb{H}}

\newcommand{\bbN}{\mathbb{N}}

\newcommand{\bbP}{\mathbb{P}}

\newcommand{\bbR}{\mathbb{R}}

\newcommand{\bbZ}{\mathbb{Z}}

\DeclareMathOperator{\re}{Re} 
\DeclareMathOperator{\dist}{dist} 
\newcommand{\argument}{\mathord{\,\cdot\,}} 
\newcommand{\dx}{\;\mathrm{d}} 
\newcommand{\modulus}[1]{\left\lvert #1 \right\rvert} 
\newcommand{\duality}[2]{\left\langle#1\, ,\, #2\right\rangle} 
\newcommand{\dom}[1]{\operatorname{dom}\left(#1\right)} 
\DeclareMathOperator{\Ima}{Rg} 
\newcommand\restrict[1]{|_{#1}}

\newcommand{\tn}{\textnormal}
\newcommand{\inv}{^{-1}}
\newcommand{\dO}{\partial\Omega}
\newcommand{\R}{\bbR}
\newcommand{\C}{\bbC}
\newcommand{\N}{\bbN}

\newcommand{\Z}{\bbZ}
\newcommand{\di}{{\operatorname{div}}}  

\newcommand{\gr}{\nabla}
\newcommand{\Ae}{A_\tn{ext}}
\newcommand{\Ato}{A_{21}}
\newcommand{\sca}[2] {\left(#1\, ,\, #2\right)} 
\newcommand{\io}{\int_\Omega}
\newcommand{\iow}{\int_{\Omega_w}}
\newcommand{\ioh}{\int_{\Omega_h}}
\newcommand{\ig}{\int_\gamma}
\newcommand{\iG}{\int_\Gamma}
\newcommand{\supp}{\operatorname{supp}}
\newcommand{\tr}{{\operatorname{tr}}} 
\newcommand{\trGw}{\tr^{\Gamma_w}}
\newcommand{\trGh}{\tr^{\Gamma_h}}
\newcommand{\trg}{\tr^{\gamma}}
\newcommand{\trG}{\tr^{\Gamma}}
\newcommand{\hneh}{H^{-1/2}}
\newcommand{\hnehz}{H^{-1/2}_{0}}
\newcommand{\heh}{H^{1/2}}
\newcommand{\hehz}{H^{1/2}_{0}}

\newcommand{\bb}{\gg}

\newcommand{\TS}{T_S^\perp} 
\newcommand{\AS}{A_S^\perp} 
 
\DeclareMathOperator*{\essinf}{ess\,inf}
\newcommand{\domS}{L^\infty(\Omega_h,\C^{n\times n})} 

\makeatletter
\newcommand{\sectionnotoc}[1]{%
  \begingroup
  \let\@tocwrite\@gobbletwo
  \section*{#1}%
  \endgroup
}
\makeatother

\theoremstyle{definition}
\newtheorem{definition}{Definition}[section]
\newtheorem{remark}[definition]{Remark}

\newtheorem*{remark*}{Remark}
\newtheorem*{remarks*}{Remarks}

\theoremstyle{plain}
\newtheorem{proposition}[definition]{Proposition}
\newtheorem{lemma}[definition]{Lemma}
\newtheorem{theorem}[definition]{Theorem}
\newtheorem{corollary}[definition]{Corollary}

\numberwithin{equation}{section} 

\begin{document}

\title[Decay Rates of Coupled Wave-Heat Systems]{Decay Rates and Domain Dependence of a Coupled Wave-Heat System with Spatially Dependent Heat Coefficients}

\author{Pascal Heymoß}

\address[P. Heymoß]{University of Wuppertal, School of Mathematics and Natural Sciences, Gaußstraße 20, 42119 Wuppertal, Germany}
\email{heymoss@uni-wuppertal.de}
\subjclass[2010]{}
\keywords{Coupled wave-heat systems, non-uniform stability, decay rates, closure relations}
\date{\today}
\begin{abstract}
    We study of the long-term behavior of a coupled wave-heat system.   
    The system consists of a wave equation and a heat equation on two adjacent Lipschitz domains coupled by a common interface, with the heat equation being allowed to incorporate spatially dependent coefficients. 
    
    We first establish strong asymptotic stability independent of the domains and coefficients. 
    To this end, we employ the framework of closure relations, 
    which reduces the spectral analysis of the coupled system to that of a wave equation.
    
    Secondly, we analyze non-uniform decay rates for classical solutions to the coupled system. Using methods from the theory of $C_0$-semigroups and a non-orthogonal decomposition of the state space, we reduce the problem to a 
    residual estimate which is independent of the heat domain and heat coefficients, instead depending monotonically on the wave domain and the interface. 
    With this, we are able to extend the known non-uniform decay rates to spatially dependent heat coefficients, yielding logarithmic decay under no assumptions, as well as polynomial decay in one dimensions or under the Geometric Control Condition.
\end{abstract}

\maketitle

\vspace{-9mm}
\tableofcontents
\section{Introduction}
\label{sec:mainintro}

We consider a bounded Lipschitz domain $\Omega\subset\R^n$ with two subsets $\Omega_w,\Omega_h$, which are themselves disjoint non-empty Lipschitz domains with $\overline{\Omega} = \overline{\Omega_w\cup \Omega_h}.$ We denote by $\nu_w,\nu_h$ the respective unit outer normal vectors of $\Omega_w,\Omega_h$,  by $\Gamma_w \coloneq \dO\backslash \dO_h,\Gamma_h \coloneq \dO\backslash \dO_w$ the outer parts of the boundary, and by $\gamma \coloneq \dO_w\backslash\overline{\Gamma_w} = \dO_h\backslash \overline{\Gamma_h}$ the interface.    
The triple $(\Omega,\Omega_w,\Omega_h)$ is then called a \textit{Lipschitz interface triple.} See also Figure~\ref{fig:interfacetriple} and  Definition~\ref{def:lipschitz}.

We study the long-term behavior of the  coupled wave-heat system 
\begin{alignat}{2}
    w_{tt}(t, x) &= \Delta w(t, x)~&& \tn{for }x\in \Omega_w, t\geq 0, \nonumber \\ 
    h_t(t, x) &= \di (S\nabla h(t,\cdot))(x)~&&\tn{for }x\in \Omega_h, t\geq 0, \nonumber\\
    w_t(t, x) &= 0 ~&&\tn{for }x\in \Gamma_w, t\geq 0,\nonumber\\
    h(t, x) &= 0~&&\tn{for }x\in \Gamma_h, t\geq 0, \label{eq:1}\\
    w_t(t, x) &= h (t, x) ~&&\tn{for } x\in \gamma, t\geq 0, \nonumber\\
    -\nabla w(t, x)\cdot\nu_w(x) &= (S(x)\nabla h (t, x))\cdot\nu_h (x)~&&\tn{for }x\in \gamma, t\geq 0, \nonumber 
\end{alignat} 
where the heat coefficient $S\in \domS$ is accretive, i.e., where $S$ satisfies $\re(\overline{v}^\top S(x)v)\geq \kappa |v|^2$ for some $\kappa>0$, all $v\in \bbC^n$ and almost all $x\in \Omega_h$.

\vspace{-1mm}
\begin{figure}[htbp]
    \centering
\begin{tikzpicture}[thick, scale=1]
    \begin{scope}[shift={(0,0)}]
        \coordinate (A) at (0, 1.5);
        \coordinate (B) at (0.15, 0.9);
        \coordinate (C) at (0, 0.375);
        \coordinate (D) at (0.225, 0);

        \draw (A) .. controls (-0.75, 1.875) and (-1.5, 1.5) .. (-1.875, 0.75) 
                 .. controls (-2.25, 0) and (-1.5, -0.35) .. (-1.125, -0.1)
                 .. controls (-0.375, -0.375) .. (D);
        
        \draw[densely dashed] (A)node[below] {\hspace{-2mm}$\gamma$} -- (B) -- (C) -- (D) ;

        \draw (A) .. controls (0.75, 1.65) and (2.4, 1.35) .. (2.4, 0.75)
                        .. controls (2.4, 0) and (1.125, -0.375) .. (D);

        \node at (-0.9, 1) {$\Omega_w$};
        \node at (1, 1) {$\Omega_h$};
        \node at (-1, 0.5) {$w_{tt}=\Delta w$};
        \node at (1.2, 0.5) {$h_t = \di S \gr h$};
        \node at (-1.8, 1.35) {$\Gamma_w$};
        \node at (2.2, 1.4) {$\Gamma_h$};
      
        \node  at (4.7,0.6){
            $\begin{aligned}
                w_t|_{\Gamma_w} &= 0 = h|_{\Gamma_h} \\
                w_t|_{\gamma} &= h|_{\gamma}\\
                \gr  w|_{\gamma}\cdot \nu_w &= - S \gr h\restrict{\gamma}\cdot \nu_h
            \end{aligned}$
          };
    \end{scope}

\end{tikzpicture}

\caption{\textbf{Lipschitz interface triple.} An example of a Lipschitz interface triple in $\R^2$, on which a coupled wave-heat system is considered. We note that it is possible for one of the domains $\Omega_w,\Omega_h$ to completely surround by the other.}
    \label{fig:interfacetriple}
\end{figure}

\subsection*{Existing Literature}
The coupled wave-heat system~\eqref{eq:1} was first introduced with constant heat coefficients $S\equiv 1$ by Zhang and Zuazua in \cite{zuazua1d} as a linearized model for fluid-structure interaction.
Such models describe the interaction of elastic solids with fluids, where the fluid-structure coupling matches the normal surface velocity of the two interacting substances, see \cite{zuazualongtime} as well as the references therein. 
System~\eqref{eq:1} is often considered with the simpler transmission condition
$w(\cdot ,t)\restrict{\gamma} = h(\cdot, t)\restrict{\gamma}$ instead of transmission condition $w_t(\cdot ,t)\restrict{\gamma} = h(\cdot, t)\restrict{\gamma}$\cite{zuazua1dsimilar,zuazuapolynomial,DuyckaertsOptimal}.
The transmission condition we consider here is more natural in the sense that both 
$w_t$ and $h$ represent velocities. Indeed, $h$ represents the velocity of the fluid and $w$ represents the displacement of the elastic solid.

Fluid-structure interaction models have been of long-standing interest in the mathematical sciences with applications ranging from engineering (airflow along an aircraft, ocean engineering) to biology (deformation of heart valves) \cite{bookonfluidstructure,aerospaceappl,Oceanengineering,heartvalves}.
For a survey on more detailed models of fluid-structure interaction, see \cite{Avalossurvey}. 

Coupled wave-heat systems, and their asymptotic behavior especially, have received considerable mathematical attention recently, under multiple different geometric assumptions on the underlying domains 
\cite{1dimoptimalrate,rectangulardomains,sandwichdomains,infiniteheatpart}.
In \cite{zuazua1d,zuazua1dsimilar}, controllability and observability of such systems is discussed. 

Furthermore, several studies explore closely related variants of the wave-heat system~\eqref{eq:1}. 
In \cite{lassinontrivialwaveparameter}, a one-dimensional wave-heat system with non-constant density $\rho(x)$ and Young modulus $T(x)$ is considered, where the wave equation takes the form 
$$
    \rho(x)w_{tt}(t,x) = (T(x)w_x(t,x))_x,
$$ 
with Dirichlet and acoustic boundary conditions.
In \cite{colemangurtinlaw,colemangurtinoriginal}, the heat equation on $\Omega_h$ is replaced by a Coleman-Gurtin equation
$$h_t(t,x) =  h_{xx}(t,x) +\int_0^\infty g(s)h_{xx}(t-s,x)\dx s,$$ where $g$ is a heat conductivity kernel, which weights the influence of past temperature on the present state.
In \cite{avaloskleingordonoriginal,avaloskleingordonoptimal}, the wave equation is changed  to a Klein-Gordon equation
$$w_{tt}(t,x) = \Delta w(t,x)-w(t,x).$$
In \cite{degenerateSfourintervals,degenerateSoptimal,degenerateSoriginal}, degenerate (non-accretive) heat coefficients of the form
$$S(x)\approx \dist(x,\gamma)^\alpha,~~\alpha \in [0,1)$$ are considered.
In addition to \cite{degenerateSfourintervals,degenerateSoptimal,degenerateSoriginal}, we refer to the introduction of \cite{networkstarshaped} for an overview of how the stability of the resulting wave-heat system depends on $\alpha$, as well as on how such heat coefficients describe physical phenomena.

Moreover, in \cite{networksoptimalrates,networksplanar,networkstarshaped}, the authors investigate coupled wave-heat networks on graphs, where each edge hosts either a heat or wave equation, and the coupling enforces matching boundary traces at adjacent vertices.

For a stability analysis on abstract coupled systems we refer to \cite{lassiabstractcoupledsystems,lassinontrivialwaveparameter}.

\subsection*{Contributions}

Let us first state on of our main results in Theorem~\ref{thm:mildsolutions} before 
we describe all contributions of the paper along with its structure. 
The wave-heat system~\eqref{eq:1} can be equivalently rewritten as an abstract Cauchy problem 
\begin{align*}
    \partial_t
    \begin{pmatrix}
        w_t\\\gr w\\h
    \end{pmatrix} 
    &= A_S
    \begin{pmatrix}
        w_t\\\gr w\\h
    \end{pmatrix}
     \coloneq 
    \begin{pmatrix}
       0 & \di &0\\
       \gr & 0& 0\\
       0& 0&\di S\gr 
    \end{pmatrix}
    \begin{pmatrix}
        w_t\\\gr w\\h
    \end{pmatrix}
\end{align*} 
with 
\begin{align*}
    \dom{A_S} 
    &= 
    \left\{
        \begin{pmatrix}
            w_1\\w_2\\h_1
        \end{pmatrix}
        \in H^1_{\Gamma_w}(\Omega_w)\times H^\di   (\Omega_w)\times H^1_{\Gamma_h}(\Omega_h)~ \right|
        ~S\gr h_1 \in H^\di  (\Omega_h),  \\
        &\hspace{6ex}  \trg(w_1)=\trg(h_1), \; \trg_N(w_2)=-\trg_N(S \gr h_1)\left. \rule{0cm}{0.75cm}
    \right\}. 
\end{align*}  
For the definitions of the above Sobolev spaces and trace operators, see Definitions~\ref{def:ofh1/2},~\ref{def:restrictedtraceoperators}, and~\ref{def:allHspaces}.

We now formulate one of our main results which  shows dissipation of the wave-heat system~\eqref{eq:1}, and even that mild solutions to $\eqref{eq:1}$ converge for $t\to \infty$. The other main results are given in Theorem~\ref{thm:domdependence}, Theorem~\ref{thm:logdecay}, and Corollary~\ref{cor:GGCpolynomrate}, see also Figure~\ref{fig:implications}.

\begin{theorem} \label{thm:mildsolutions}
    For each accretive $S\in \domS $,
    the operator $A_S$ generates a contractive $C_0$-semigroup $T_S = (T_S(t))_{t \ge 0}$ on $ L^2(\Omega_w)\times L^2(\Omega_w)^n \times L^2(\Omega_h)$ with fixed space 
    $$
        \ker(A_S) 
        = 
        \left\{
            \begin{pmatrix}
                0\\w_2\\0
            \end{pmatrix}
            ~\middle|~ 
            w_2 \in H^\di(\Omega_w), \; \di w_2 = 0, \; \trg_N(w_2)=0
        \right\}
    $$ 
    and $T_S(t)$ converges strongly as $t \to \infty$. 
    Moreover, 
    \begin{align*}
        \ker(A_S)^\perp = L^2(\Omega_w)\times \gr H^1_{\Gamma_w}(\Omega_w)\times L^2(\Omega_h)
    \end{align*} 
    is invariant under $T_S$, and its restriction $\TS \coloneq T_S\restrict{\ker(A_S)\perp}$ is generated by $\AS  \coloneq  A_S\restrict{\ker(A_S)\perp}$ and converges strongly to $0$ as $t \to \infty$.
\end{theorem}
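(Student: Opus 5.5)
We plan to show that $A_S$ is maximal dissipative, identify its kernel, and then obtain strong convergence from the Arendt--Batty--Lyubich--V\~u theorem applied to $\AS$.

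\emph{Dissipativity.} For $x=(w_1,w_2,h_1)\in\dom{A_S}$ we compute $\re\langle A_Sx,x\rangle$ with Green's formulas on $\Omega_w$ and $\Omega_h$, using the duality between $\heh$ and $\hneh$ on $\partial\Omega_w$ and $\partial\Omega_h$. The terms $\io_w \di w_2\,\overline{w_1}+\nabla w_1\cdot\overline{w_2}$ reduce to $\langle \trg_N w_2,\trg w_1\rangle$, since $w_1$ vanishes on $\Gamma_w$. The heat part gives $-\ioh S\gr h_1\cdot\overline{\gr h_1}+\langle\trg_N(S\gr h_1),\trg h_1\rangle$. The transmission conditions make the boundary terms cancel in real part, so
\[
\re\langle A_Sx,x\rangle=-\re\ioh S\gr h_1\cdot\overline{\gr h_1}\le-\kappa\|\gr h_1\|^2\le 0 .
\]

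\emph{Maximality.} We show that $I-A_S$ is surjective. Given $(f_1,f_2,g)$, eliminate $w_2=\gr w_1-f_2$ and reduce to a variational problem for $u=(w_1,h_1)$ in
\[
V=\{(u_w,u_h)\in H^1_{\Gamma_w}(\Omega_w)\times H^1_{\Gamma_h}(\Omega_h) : \trg u_w=\trg u_h\}\cong H^1_0(\Omega).
\]
The form is $a(u,v)=\iow(u_w\bar v_w+\gr u_w\cdot\overline{\gr v_w})+\ioh(u_h\bar v_h+S\gr u_h\cdot\overline{\gr v_h})$, which is bounded and coercive because $S$ is accretive. Lax--Milgram then gives $u$. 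Recovering $w_2\in H^\di$, $S\gr h_1\in H^\di$ and the Neumann transmission condition from the weak formulation is routine. Lumer--Phillips then yields a contractive $C_0$-semigroup.

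\emph{Kernel and complement.} If $A_Sx=0$, dissipativity gives $\gr h_1=0$, hence $h_1=0$ because $\Gamma_h$ has positive measure. Then $\trg w_1=0$, and $\gr w_1=0$ together with $w_1=0$ on $\Gamma_w\cup\gamma$ forces $w_1=0$. What remains is $\di w_2=0$ and $\trg_N w_2=0$. The orthogonal complement follows from the Helmholtz-type decomposition
\[
L^2(\Omega_w)^n=\gr H^1_{\Gamma_w}(\Omega_w)\oplus\{w_2\in H^\di(\Omega_w):\di w_2=0,\ \trg_N w_2|_{\gamma}=0\}.
\]
Here $\gr H^1_{\Gamma_w}$ is closed by Poincaré, and its orthogonal complement is characterised by Green's formula as the divergence-free fields with vanishing normal trace on $\gamma$. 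Invariance of $\ker(A_S)^\perp$ holds because $\ker(A_S)$ is invariant and $\ker(A_S)=\ker(A_S^*)$. The last equality can be checked directly: $A_S^*$ has the same structure with $S^*$ and sign changes. Alternatively, for a contraction semigroup, fixed points of $T$ and $T^*$ coincide, so the complement of the fixed space is invariant. The restriction $\AS$ generates $\TS$.

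\emph{Strong stability.} Since $\dom{A_S}$ embeds compactly into the state space is false in general (because of the $H^\di$ component), we instead apply the ABLV theorem to $\TS$. This needs $\sigma(\AS)\cap i\R$ to be countable and $\sigma_p(\AS^*)\cap i\R=\emptyset$.

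For $0$: $\AS$ is injective by construction. It is surjective by the same Lax--Milgram argument without the zero-order terms, since coercivity on $V$ holds by Poincaré, and the $w_2$-component is adjusted inside $\gr H^1_{\Gamma_w}$. Hence $0\in\rho(\AS)$.

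For $i\beta$ with $\beta\ne0$: an eigenvector satisfies $\gr h_1=0$, so $h_1=0$. Then $\trg w_1=0$ and $\trg_N w_2=0$, so $w_1$ solves $-\Delta w_1=\beta^2 w_1$ in $\Omega_w$ with both Dirichlet and Neumann data vanishing on $\gamma$. Extending by zero across $\gamma$ gives an $H^1$ solution on a larger connected domain vanishing on an open set, so unique continuation forces $w_1=0$. The same argument applies to $\AS^*$.

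\textbf{Main obstacle: the residual spectrum.} For $\beta\neq0$, surjectivity of $i\beta-\AS$ must be established. The plan is to reduce it to a Fredholm problem. Eliminating $w_2$ leaves the form on $V$ with zero-order terms $\beta^2$ (wave part) and $i\beta$ (heat part). This form is a compact perturbation of the coercive one, via Rellich on $H^1$. Injectivity, which we already have from unique continuation, then gives surjectivity by the Fredholm alternative. Consequently $\sigma(\AS)\cap i\R=\emptyset$, and ABLV yields $\TS(t)\to0$ strongly.

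Strong convergence of $T_S$ follows by splitting the state space as $\ker(A_S)\oplus\ker(A_S)^\perp$: on $\ker(A_S)$ the semigroup is the identity, and on the complement it tends to $0$.
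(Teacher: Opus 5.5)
Your route differs from the paper's and works in outline, but the step you flag as the main obstacle is not justified as written. For $(i\beta-A_S)x=(f_1,f_2,g)$ with $\beta\neq 0$ one has $w_2=(\gr w_1+f_2)/(i\beta)$. So the interface condition $\trg_N(w_2)=-\trg_N(S\gr h_1)$ becomes $\trg_N(\gr w_1+f_2)=-i\beta\,\trg_N(S\gr h_1)$, and the two fluxes differ by the factor $i\beta$. Suppose you add, with equal weights, the wave equation multiplied by $i\beta$ (which gives the zero-order term $-\beta^2$) and the heat equation (zero-order term $i\beta$). Then the interface terms do not cancel, and that form encodes the wrong flux condition. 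The form on $V$ whose interface terms do cancel is
\[
b_\beta(u,v)=\iow \gr u_w\cdot\overline{\gr v_w}\dx x+i\beta\ioh S\gr u_h\cdot\overline{\gr v_h}\dx x-\beta^2\io u\,\overline{v}\dx x .
\]
Neither $b_\beta$ nor any scalar multiple of it is a compact perturbation of your $\lambda=1$ form: the difference always contains a non-compact multiple of $\ioh S\gr u_h\cdot\overline{\gr v_h}\dx x$. For real symmetric $S$, $\re b_\beta(u,u)$ does not involve $\gr u_h$ at all. So no G\aa{}rding inequality holds for the real part, and the Fredholm alternative is not available in the form you state.

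The missing idea is a rotation. We have $\re(\overline{\xi}^\top S\xi)\ge\kappa|\xi|^2$ and $|\overline{\xi}^\top S\xi|\le\|S\|_\infty|\xi|^2$. Choose $\theta$ with the sign of $\beta$, $|\theta|<\pi/2$ and $\tan|\theta|>\|S\|_\infty/\kappa$. Then, for some $c>0$,
\[
\re\bigl(e^{-i\theta}b_\beta(u,u)\bigr)\ge c\,\|\gr u\|_{L^2(\Omega)}^2-\beta^2\|u\|_{L^2(\Omega)}^2,\qquad u\in V\cong H^1_0(\Omega).
\]
By Lax--Milgram and Rellich, the operator $V\to V'$ induced by $b_\beta$ is Fredholm of index $0$. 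Your unique-continuation argument gives injectivity. The regularity $w_2\in H^\di(\Omega_w)$, $S\gr h_1\in H^\di(\Omega_h)$ and the flux condition are then recovered as in your maximality step.

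Two smaller points. First, the step ``$h_1=0$ because $\Gamma_h$ has positive measure'' fails when $\Gamma_h=\varnothing$, which the paper explicitly allows. Use instead that $(w_1,h_1)$ patches to an element of $H^1_0(\Omega)$ with zero gradient. Second, at $\lambda=0$ you cannot eliminate $w_2$, so ``the same Lax--Milgram argument'' does not apply. The claim $0\in\rho(\AS)$ is still true: solve the triangular system ($w_1$ from $f_2$, then a Dirichlet problem for $h_1$, then a mixed problem for a potential of $w_2$, fixing the free constant by compatibility if $\Gamma_w=\varnothing$). For ABLV, however, injectivity of $\AS$ already suffices.

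For comparison, the paper never solves the transmission problem at $i\beta$. Generation comes from skew-adjointness of the extended operator $\Ae$ through the closure-relation framework. The points $i\lambda\neq0$ are excluded as follows: an approximate eigenvector of $A_S$ is turned into one of $\Ae$ with vanishing fourth component and projected onto $\overline{\Ima(\Ae)}$, where $\Ae$ has compact resolvent (Lemma~\ref{lem:Aextcompactresolvent}); analyticity for the Dirichlet wave operator on all of $\Omega$ then finishes. This sidesteps both the lack of compactness of $\dom{\AS}$ and any coercivity issue for complex $S$, but it only yields $\sigma(\AS)\cap i\R\subseteq\{0\}$. Your repaired route is more elementary and gives the stronger conclusion $\sigma(\AS)\cap i\R=\varnothing$. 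Your invariance argument via common fixed points of $T_S$ and $T_S^*$ amounts to the paper's use of the orthogonal mean ergodic projection.
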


The proof of Theorem~\ref{thm:mildsolutions} uses the closure relation framework developed in \cite{hansOGpaper,gorrec,gluck2024stability}.
The framework uses methods from the theory of port-Hamiltonian systems, and allows us to reduce the generator $A_S$ to two simpler operators $\Ae,D_D$ acting on  larger domains, which are defined in Subsection~\ref{sec:contractionsemigroup}.  
The proof of Theorem~\ref{thm:mildsolutions} then follows in Subsection~\ref{sec:strongstability}

For this, we require some information about whether two functions can be glued together without loss of their regularity. 
These results are also known as patching results and are presented in Proposition~\ref{prop:patching}. To this end, and to formalize the boundary conditions in system~\eqref{eq:1}, we discuss different types of trace operators and trace spaces in Section~\ref{sec:maintraces}.

In Section~\ref{sec:mainnonuniform}, we investigate non-uniform decay rates on the semigroup $\TS$, i.e.\ stability results of the form 
\begin{align*}
    \|\TS(t)x\|_{\bbH_1} \leq C_k d(t)^k \|x\|_{(\AS)^k} ~~ \forall t\geq 0, \;  x\in \dom{(\AS)^k}, \; k \in \N,
\end{align*} 
where $d:[0,\infty)\to[0,\infty)$ is a decreasing function with $\lim_{t\to \infty} d(t) = 0$, the $C_k$ are positive constants, and $\|\argument\|_A$ denotes the graph norm of an operator $A$. 
Such non-uniform decay rates are stability results for classical solutions to the wave-heat system \eqref{eq:1}.
 
Using the resolvent criteria  for non-uniform decay rates established in \cite{borichev2010optimal,batty2008non}, see also Theorem \ref{thm:abstractresolventbounds}, we
 infer equivalent characterizations of such non-uniform decay, allowing us to further analyze how the underlying Lipschitz interface triple $(\Omega,\Omega_w,\Omega_h)$ affects these. 
Specifically, we show that such non-uniform decay is (up to some small loss in the decay rate $d$) independent of $\Omega_h$ and $S$, instead depending only on $\Omega_w$ and the interface $\gamma$. For this, see Theorem~\ref{thm:domdependence}, where a monotonicity property with respect to $(\Omega_w,\gamma)$ is also established.

In Section~\ref{sec:mainspecificrates}, we demonstrate that $\TS$ possesses
logarithmic decay rates ($d(t)\approx 1/\log(t)$) for each accretive $S \in \domS$ on every Lipschitz interface triple $(\Omega,\Omega_w,\Omega_h)$, see Theorem~\ref{thm:logdecay},
providing a lower bound on the rate of decay.
We then consider the case where $n=1$ or where $\gamma$ geometrically controls $\Omega_w$, see Definition~\ref{def:GCC}, in which we infer that $\TS$ possesses
polynomially fast decay rates ($d(t)\approx t^{-1/2}$) for each accretive $S\in \domS$, see Corollary~\ref{cor:1dpolynomialdecay} and Corollary~\ref{cor:GGCpolynomrate}.
We note that exponentially fast, uniform stability of the semigroup $\TS$ was shown
to be impossible in Chapter~6 of \cite{zuazualongtime} via a construction of solutions which are highly localized around the trajectory of reflective rays inside the non-dissipative wave domain $\Omega_w$.

All our methods, as well as the results of Section~\ref{sec:mainnonuniform}, are new, while the results of Section~\ref{sec:mainstability} and Section~\ref{sec:mainspecificrates} were previously  known in the special case $S\equiv 1$.
Specifically under the assumption $S\equiv 1$, the strong stability Theorem~\ref{thm:mildsolutions} was first shown by Zhang and Zuazua in Theorem 4 of \cite{zuazualongtime}, the logarithmic decay was shown by Fathallah \cite{ines}, the polynomial decay under the assumption that $\gamma$ geometrically controls $\Omega_w$ was shown by Zhang and Zuazua in Chapter~7 of \cite{zuazualongtime} and later improved by Duyckaerts \cite{DuyckaertsOptimal}, and polynomial decay in one dimensions with rate $d(t)\approx t^{-2}$ was first established by Zhang and Zuazua \cite{zuazua1d} and then proven to be optimal by Batty, Paunonen, and Seifert \cite{1dimoptimalrate}.

A large part of this paper is thus focused on  extending the already existing results to more general settings with weaker assumptions, most importantly allowing for non-constant heat coefficients $S\in \domS$. In particular, we impose no smoothness on $S.$
We also note that the regularity assumptions imposed on $(\Omega,\Omega_w,\Omega_h)$ in \cite{ines,zuazualongtime} are stronger than our assumptions here, as we only assume Lipschitz regularity on the domains and the interface, see Definition~\ref{def:lipschitz}.

A major difficulty if $S \not= 1$ is that $\di S\gr$ is no longer just the Laplacian, instead $\dom{\di S\gr}$, and hence $\dom{A_S}$, depend intricately on $S$. We circumvent this difficulty by decomposing $\dom{\AS}$ non-orthogonally into a part that is independent of $\Omega_h$, and hence independent of $S$, and a part whose $L^2$-mass is supported mostly on $\Omega_h$, where strong dissipation occurs. This will be the main focus of Subsection~\ref{sec:reductiontowavedomain}, allowing us to extend the already existing results in \cite{zuazualongtime,zuazua1d,ines} to non-trivial heat coefficients $S\in \domS$.

\section{Trace Preliminaries} \label{sec:maintraces}
As mentioned in the introduction, this section is dedicated to the different types of trace operators and Sobolev spaces needed for the analysis of the wave-heat system~\eqref{eq:1}. Additionally, these traces are essential in determining when two functions defined on opposite parts of a Lipschitz interface triple can be glued together without loss of their regularity, see Proposition~\ref{prop:patching}.
First, however, we present the precise regularity assumptions on Lipschitz interface triples.

\begin{definition}[Lipschitz interface triples]\label{def:lipschitz}
\begin{enumerate}
    \item Let $\Omega\subset \R^n$ be a non-empty domain. We call $\Omega$ a (strongly) \textit{Lipschitz domain} if $n=1$ or if for every $x\in \dO$, there is an open neighborhood $U_x \subset \R^n$ around $x$, an orthogonal matrix $O_x \in \R^{n\times n}$ and a Lipschitz function 
    $\phi_x: \R^{n-1}\to \R$ with 
    \begin{align*}
       O_x (U_x\cap \Omega) = \{(\xi,y) \in \R^{n-1}\times \R~|~ y < \phi_x(\xi) \}\cap O_x(U_x).
    \end{align*}

    \item Let $\Omega\subset \R^n$ be a non-empty, bounded Lipschitz domain, and let $\Omega_w,\Omega_h\subset \Omega$ be two disjoint, non-empty Lipschitz domains. If 
    $\overline{\Omega}=\overline{\Omega_w\cup \Omega_h}$, we call $(\Omega,\Omega_w,\Omega_h)$ a \textit{Lipschitz interface triple} with interface $\gamma.$ 
    In this case, we set
    $$\Gamma_w  \coloneq  \dO\backslash\dO_h, ~~\Gamma_h \coloneq \dO\backslash\dO_w,~~ \gamma \coloneq  \dO_w\backslash\overline{\Gamma_h} = \dO_h\backslash\overline{\Gamma_w}.$$
    Furthermore, we denote by $\nu_w,\nu_h$ the unit outer normal vectors on $\Gamma_w,\Gamma_h$, which exist almost everywhere due to Rademacher's Theorem for $n\geq 2,$ and trivially everywhere in the case $n=1.$

Our assumptions on $(\Omega,\Omega_w,\Omega_h)$ are weaker than the assumptions imposed in  \cite{ines,zuazualongtime}.

    \item Let $\Omega\subset \R^n$ be a non-empty Lipschitz domain, and let $\Gamma\subset \dO$ be a non-empty, relatively open subset of the boundary.
    We call $(\Omega,\Gamma)$ an \textit{admissible} Lipschitz pair, if there exists some domain $\Tilde{\Omega}\subset \R^n$ such that 
    $$(\Omega\cup \Gamma\cup \Tilde{\Omega}, \Omega, \Tilde{\Omega})$$ is a Lipschitz interface triple with interface $\Gamma$.
\end{enumerate}
\end{definition}

\begin{definition} \label{def:ofh1/2}
    Let $\Gamma$ be a relatively open subset of the boundary $\partial \Omega$ of some bounded Lipschitz domain $\Omega \subset \R^n, n\geq 2$.
    We then define the space
    \begin{alignat}{2}
        H^{1/2}(\Gamma)& \coloneq  \left\{v\in L^2(\Gamma)~\middle|~ \|v\|_{H^{1/2}(\Gamma)}^2 <\infty \right\},  \tn{ where }\nonumber\\
        \|v\|_{H^{1/2}(\Gamma)}^2& \coloneq  \|v\|_{L^2(\Gamma)}^2\nonumber
        + \iG \iG \frac{|u(x)-u(y)|^2}{|x-y|^{n}}\dx\sigma(x,y).
    \end{alignat}   
    If $n=1$, we equip $\heh(\Gamma)  \coloneq  \C^{|\Gamma|}$ with the Euclidean norm.
    
    Additionally, we denote by $$\tr: H^1(\Omega) \to H^{1/2}(\partial \Omega)$$ the usual trace operator extending the restriction $C^1(\overline{\Omega})\to C^0(\partial \Omega), v\mapsto v\restrict{\partial\Omega}$, which is bounded and surjective, see Theorem 1.5.1.3 in \cite{grisvard2011elliptic}.

\end{definition}

\begin{definition}
Let $\Omega\subset \R^n$ be a bounded Lipschitz domain with unit outer normal $\nu.$
    We denote by $\tr_N$ the normal trace operator
  \begin{align*}
      \tr_N&:H^\di(\Omega) \to H^{-1/2}(\partial \Omega) = (H^{1/2}(\partial\Omega))',
  \end{align*}    
    which extends the normal component trace operator 
    $$H^1(\Omega)\to H^{1/2}(\dO), f\mapsto \tr (f) \cdot \nu.$$
    The normal trace operator $\tr_N$ is well-defined, bounded and surjective, see e.g. Lemma 20.2 in \cite{tartar2007introduction}.
 For $f\in C^1(\Omega)^n, \psi \in H^1(\Omega)$,  we note the identity 
 \begin{align} 
    \int_{\partial \Omega}  \tr(\psi) f\cdot \nu \dx \sigma &= \io \di(f\psi)\dx x \label{eq:motivationnormaltrace}
    \\&= \io f\cdot \gr \psi + \di(f)\psi \dx x,\nonumber
 \end{align} which extends by density to 
\begin{align}
      \duality{\tr_N(f)}{\tr (\psi)} &= \io f\cdot \gr \psi + \di(f) \psi \dx x \label{eq:normaltraceidentitybydensity}
  \end{align} for $f\in H^\di( \Omega),\psi \in H^{1}( \Omega).$
      Here and in the following, we understand that if $\Omega$ is a one-dimensional domain and $f$ is some function defined on $\dO$, then $$\int_{\dO} f\cdot \nu\dx\sigma = f\left(\sup_{x\in \Omega} x\right)-f\left(\inf_{x\in \Omega} x\right) ~\tn{ and }~\iG f\cdot \nu \dx\sigma = \int_{\dO} \mathds{1}_\Gamma f\cdot \nu \dx\sigma. $$
 \end{definition}
 So far, these traces are only  defined on the whole boundary of some bounded Lipschitz domain $\Omega\subset \R^n$. However, the boundary conditions in \eqref{eq:1} are restricted to the interface $\gamma$ or the outer boundary parts $\Gamma_w,\Gamma_h$ of the Lipschitz interface triple $(\Omega,\Omega_w,\Omega_h)$. We thus need to work with trace operators on  relatively open subsets $\Gamma$ of the boundary $\partial \Omega$.
 One of the main subtleties is that extensions of $H^{1/2}(\Gamma)$ functions by zero generally do not lie in $H^{1/2}(\dO)$. 
 
 \begin{definition}
 Let $\Gamma$ be a relatively open subset of the boundary $\partial \Omega$ of some bounded Lipschitz domain $\Omega \subset \R^n$.
 \label{def:restrictedtraceoperators}
 \begin{enumerate}
     \item  The restriction of $\tr(f)$ to $\Gamma$ for $f\in H^1(\Omega)$ induces the restricted trace operator 
    $$\trG:H^1(\Omega)\to H^{1/2}(\Gamma), f\mapsto \tr(f)\restrict{\Gamma},$$
    which is again bounded and surjective, see e.g.  Proposition 1.2.60 in \cite{Meinlschmidt}.
    \item We define the Lions-Magenes space $\hehz(\Gamma)$ as the subspace of functions $f\in \heh(\Gamma)$, for which the extension by zero $\Tilde{f}$ to $\dO$ is again an element of $\heh(\dO)$. If $n=1$, one clearly has $\hehz(\Gamma) = \heh(\Gamma)$. If $n\geq 2$, then $\hehz(\Gamma)$ is an intrinsic, i.e., independent of $\dO$, Hilbert space when equipped with the norm
    \begin{align*}
        \|f\|_{\hehz(\Gamma)}  \coloneq  \left(\|f\|_{\heh(\Gamma)} +\iG  \frac{|u(x)|^2}{\dist(x,\partial \Gamma)} \dx x\right)^{1/2},
    \end{align*} see e.g.\ Chapter~33 in \cite{tartar2007introduction}.
    This norm is equivalent to the $H^{1/2}(\dO)$-norm of the zero extension $\Tilde{f}$.
    Additionally, denote by $\hnehz(\Gamma)$ the dual space of $\hehz(\Gamma)$.
    \item We define the restricted normal trace operator
    $$\trG_N:H^\di(\Omega)\to \hnehz(\Gamma), f\mapsto \left( \hehz(\Gamma)\to \C, g\mapsto
    \duality{\tr_N(f)}{\Tilde{g}}\right),$$ where 
    $\Tilde{g}$ denotes the extension by zero to $\dO$ of an element $g\in \hehz(\Gamma)$.
    
    The restricted normal trace operator $\trG_N:H^\di(\Omega)\to \hnehz(\Gamma)$ is well-defined, bounded and surjective, see Lemma~\ref{lem:normaltracesurj}.
    \end{enumerate}

 \end{definition}
  
\begin{remark}\label{rem:smoothrestrictedtraces}
    \begin{enumerate}
    \item $\trG_N(f)$ is not an intrinsic element of $\hnehz(\Gamma)$, as the orientation of $\Gamma$ determines the direction of the outer normal vector $\nu$. For example, one has $\trG_N(f\restrict{\Omega}) = - \trG_N(f\restrict{\R^n\backslash \Omega})$ for all $f\in H^\di(\R^n).$
    \item As in equation~\eqref{eq:motivationnormaltrace}, we find,  for $f \in C^1(\overline{\Omega})$, that the restricted normal trace is simply given by $\trG_N(f) = (f \cdot \nu)\restrict{\Gamma},$ where $\nu$ denotes the unit outer normal vector on $\dO$.
    If $f$ is only an element of $H^1(\Omega)$, one still has $\trG_N(f) = \trG(f) \cdot \nu.$

    Furthermore, due to the surjectivity of $\trG,$ we find that 
    $\trG_N(f)$ is uniquely determined by the identity
    \begin{align}\label{eq:normaltraceidentity}
        \duality{\trG_N(f)}{\trG(g)} = \io  f\cdot \gr g + \di(f)g \dx  x
    \end{align} for all $g \in H^1(\Omega)$ with $\tr^{\dO \backslash\Gamma}(g) = 0$.
    \end{enumerate}
\end{remark}

\begin{definition}\label{def:allHspaces}

Let $\Gamma$ be a relatively open subset of the boundary $\partial \Omega$ of some bounded Lipschitz domain $\Omega \subset \R^n$. We define the following subspaces of $L^2(\Omega)$.
    \begin{align*}
    H^1_0(\Omega) & \coloneq  \{f\in H^1(\Omega)~|~ \tr(f) = 0\},\\
        H^1_\Gamma(\Omega) & \coloneq  \{f\in H^1(\Omega)~|~ \trG(f) = 0\},\\
        H^\di_0(\Omega)& \coloneq   \{f\in H^\di(\Omega)~|~ \tr_N(f) = 0\},\\
        H^\di_\Gamma(\Omega)& \coloneq   \{f\in H^\di(\Omega)~|~ \trG_N(f) = 0\},\\
        H^\Delta(\Omega)& \coloneq \{f\in H^1(\Omega)~|~ \gr f \in H^\di(\Omega)\},\\
         H^\Delta_0(\Omega)& \coloneq \{f\in H^1_0(\Omega)~|~ \gr f \in H^\di(\Omega)\},\\
          H^\Delta_\Gamma(\Omega)& \coloneq \{f\in H^1_\Gamma(\Omega)~|~ \gr f \in H^\di(\Omega)\}.
    \end{align*}
    The first two spaces are equipped with the usual $H^1$-norm, the third and fourth with the usual $H^\di$-norm,
    and the latter three spaces with the norm $$
        \|f\|_{H^\Delta(\Omega)}^2  \coloneq  \|f\|^2_{H^1(\Omega)}+ \|\Delta f\|^2_{L^2(\Omega)}
    $$
\end{definition}

 The following Proposition~\ref{prop:patching} determines whether two functions on opposite parts of a Lipschitz interface triple can be glued together without losing their regularity, depending on their boundary traces.

\begin{proposition}\label{prop:patching}
Let $(\Omega,\Omega_w,\Omega_h)$ be a Lipschitz interface triple with interface $\gamma$. Let $\pi\in L^2(\Omega)$ be a patching of $w,h$, i.e., $\pi\restrict{\Omega_w} = w, \pi\restrict{\Omega_h }=h$.
    \begin{enumerate}
        \item Assume $w\in H^1(\Omega_w), h\in H^1(\Omega_h)$, then $w,h$ have matching trace $$\trg(w) =\trg(h)$$ if and only if $\pi$ is an element of $ H^1(\Omega)$.
        \item Assume $w\in H^\di(\Omega_w), h\in H^\di(\Omega_h)$, then $w,h$ have matching normal trace $$\trg_N(w) = -\trg_N(h)$$ if and only if $\pi$ is an element of $ H^\di(\Omega)$.
        
        \item Assume $w\in H^\Delta(\Omega_w)$, $h\in H^\Delta(\Omega_h)$, then $w,h$ have matching boundary trace and normal derivative, i.e., $$\trg(w) =\trg(h)~~\tn{and} ~~ \trg_N(\gr w) = -\trg_N(\gr h),$$ if and only if $\pi$ is an element of $ H^\Delta(\Omega)$.
    \end{enumerate}
\end{proposition}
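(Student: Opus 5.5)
The approach is to characterize membership of the patching $\pi$ in $H^1(\Omega)$ and $H^\di(\Omega)$ through the distributional derivative. By definition, $\pi\in H^1(\Omega)$ if and only if there is $G\in L^2(\Omega)^n$ with $\io \pi\,\di\phi \dx x = -\io G\cdot\phi\dx x$ for all $\phi\in C_c^\infty(\Omega)^n$. Similarly, $\pi\in H^\di(\Omega)$ if and only if there is $g\in L^2(\Omega)$ with $\io \pi\cdot\gr\psi\dx x=-\io g\psi\dx x$ for all $\psi\in C_c^\infty(\Omega)$. In both cases the natural candidate is the piecewise object: $G=\gr w$ on $\Omega_w$ and $\gr h$ on $\Omega_h$ in (1), and $g=\di w$ on $\Omega_w$ and $\di h$ on $\Omega_h$ in (2).

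For (2), take $\psi\in C_c^\infty(\Omega)$. Its restrictions lie in $H^1(\Omega_w)$ and $H^1(\Omega_h)$, and their traces vanish on $\Gamma_w$ and $\Gamma_h$ respectively, since $\supp\psi$ is compact in $\Omega$. Applying identity~\eqref{eq:normaltraceidentity} on each subdomain with $\Gamma=\gamma$ and adding gives
\[
\io \pi\cdot\gr\psi + g\psi\dx x = \duality{\trg_N(w)}{\trg(\psi)}+\duality{\trg_N(h)}{\trg(\psi)}.
\]
If the normal traces match, the right-hand side vanishes and $\pi\in H^\di(\Omega)$ with $\di\pi=g$. Conversely, if $\pi\in H^\di(\Omega)$, then $\di\pi$ restricts to $\di w$ and $\di h$, so the right-hand side vanishes for all such $\psi$. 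It then remains to show that the traces $\trg(\psi)$, $\psi\in C_c^\infty(\Omega)$, are dense in $\hehz(\gamma)$. This is the main technical point. I would argue via $H^1_0(\Omega)$: every element of $\hehz(\gamma)$, extended by zero to $\dO_w$, is the trace of some function in $H^1_{\Gamma_w}(\Omega_w)$ (by surjectivity of $\tr$). That function can be extended to an element of $H^1_0(\Omega)$, for instance by reflecting across $\gamma$ in local Lipschitz charts, or by using the admissibility of $(\Omega_w,\gamma)$ together with an extension operator and a cutoff. The result is then approximated by $C_c^\infty(\Omega)$ functions in $H^1(\Omega)$, and continuity of $\trg$ finishes the density argument.

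For (1), fix $\phi\in C_c^\infty(\Omega)^n$ and use~\eqref{eq:normaltraceidentity} with the roles exchanged: $\phi\restrict{\Omega_w}\in H^1\subset H^\di$, with $\trg_N(\phi)=\trg(\phi)\cdot\nu_w$ by Remark~\ref{rem:smoothrestrictedtraces}. Pairing with $w$ and $h$ and adding gives
\[
\io \pi\,\di\phi + G\cdot\phi\dx x = \ig (\trg(w)-\trg(h))\,\phi\cdot\nu_w\dx\sigma,
\]
using $\nu_h=-\nu_w$ on $\gamma$. Matching traces make this vanish, so $\pi\in H^1(\Omega)$. For the converse, the functions $\phi\cdot\nu_w\restrict{\gamma}$ are rich enough to be dense in $L^2(\gamma)$: locally near a Lipschitz graph, take $\phi=\chi e_n$ with $e_n\cdot\nu_w$ bounded away from zero. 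Hence the traces agree almost everywhere. Alternatively, if $\pi\in H^1(\Omega)$, then $\tr$ from each side equals the trace of $\pi$ on $\gamma$, by approximating $\pi$ by $C^\infty(\overline\Omega)$ functions and using continuity of the traces on $\Omega_w$ and $\Omega_h$.

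Part (3) follows by combining the two: $\pi\in H^\Delta(\Omega)$ means $\pi\in H^1(\Omega)$ and $\gr\pi\in H^\di(\Omega)$. By (1), the first condition is equivalent to $\trg(w)=\trg(h)$, and then $\gr\pi$ is the patching of $\gr w$ and $\gr h$. By (2), the second condition is equivalent to $\trg_N(\gr w)=-\trg_N(\gr h)$. For $n=1$ all statements reduce to elementary facts about absolutely continuous functions on intervals glued at a point. The main obstacle I anticipate is the density and extension step in the converse of (2) for a general Lipschitz interface that meets $\dO$.
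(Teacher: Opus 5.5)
Your overall structure matches the paper's proof. You use the piecewise candidate for the weak derivative, the (normal) trace identities on $\Omega_w$ and $\Omega_h$ together with $\nu_h=-\nu_w$ on $\gamma$, the converse of (1) by approximating $\pi$ with $C^\infty(\overline{\Omega})$ functions, and (3) by combining (1) and (2).

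The step that is not yet an argument is the one you flag yourself in the converse of (2). For a given $v\in\hehz(\gamma)$ you need $\psi\in H^1_0(\Omega)$ with $\trg(\psi\restrict{\Omega_w})=\trg(\psi\restrict{\Omega_h})=v$. Neither of your suggestions delivers this at the junction $\overline{\gamma}\cap\dO$:
\begin{itemize}
\item Reflecting your lift across $\gamma$ would require, near such points, a bi-Lipschitz chart mapping $\Omega_h$ onto the mirror image of $\Omega_w$ and $\Gamma_h$ into $\Gamma_w$. The definition of a Lipschitz interface triple does not provide such a chart.
\item Multiplying an $H^1(\R^n)$-extension of your lift by a cutoff that vanishes on $\Gamma_h$ leaves the trace on $\gamma$ unchanged only if $v$ vanishes near $\overline{\Gamma_h}$. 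For general $v$ you would also need density of such functions in $\hehz(\gamma)$, which you neither state nor prove.
\end{itemize}

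The missing idea is how the paper settles this in one line (``the existence of $\psi$ follows from Statement (i)''). It uses that $\hehz(\gamma)$ is intrinsic (Definition~\ref{def:restrictedtraceoperators}(ii)): the zero extension of $v$ lies in $\heh(\dO_w)$ and equally in $\heh(\partial\Omega_h)$. Lift both, by surjectivity of $\tr$, to $\psi_w\in H^1(\Omega_w)$ and $\psi_h\in H^1(\Omega_h)$. These have matching traces on $\gamma$, so by the direction of (1) you have already proved, their patching $\psi$ lies in $H^1(\Omega)$. Its trace vanishes on $\Gamma_w\cup\Gamma_h$, so $\psi\in H^1_0(\Omega)$. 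You can then test with $\psi$ directly, since $\io \pi\cdot\gr\psi+\di(\pi)\psi\dx x=\duality{\tr_N(\pi)}{\tr(\psi)}=0$ by \eqref{eq:normaltraceidentitybydensity}. No density statement for traces of $C_c^\infty(\Omega)$ functions is needed.

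If you keep the density route, the convergence has to be in the $\hehz(\gamma)$-norm. That comes from continuity of the full trace $H^1(\Omega_w)\to\heh(\dO_w)$, not of $\trg$. Likewise, in the forward direction of (1) the identity to invoke is \eqref{eq:normaltraceidentitybydensity} rather than \eqref{eq:normaltraceidentity}, since it is $\phi$, not $w$, that vanishes on $\Gamma_w$.
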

We postpone the proof of this proposition to the beginning of the appendix, see Proposition~\ref{prop:patchinginappendix}.

\section{Wave-Heat Systems and Strong Stability}\label{sec:mainstability}
We are now ready to investigate the long-term behavior of the  coupled wave-heat system~\eqref{eq:1}, which we can be rewritten using the trace operators from Section~\ref{sec:maintraces}:
\begin{alignat}{2}
    w_{tt}(t,\cdot) &= \Delta w(t,\cdot)&&\tn{in }L^2(\Omega_w)\tn{ for }t\geq 0,\nonumber 
    \\ h_t(t,\cdot ) &=  \di (S\nabla h(t,\cdot))~&&\tn{in }L^2(\Omega_h)~\tn{for }t\geq 0,\nonumber \\
    \tr^{\Gamma_w}(w_t(t,\cdot)) &= 0 ~&&\tn{for }t\geq 0,\label{eq:2}\\
    \trGh(h(t,\cdot))&= 0~&&\tn{for }t\geq 0, \nonumber\\
     \trg(w_t(t,\cdot))&= \trg(h(t,\cdot))~&&\tn{for }t\geq 0, \nonumber\\
    -\trg_N(\gr w(t,\cdot)) &= \trg_N(S\gr h(t,\cdot))~&&\tn{for }t\geq 0.\nonumber 
\end{alignat} We recall that the heat coefficient $S\in \domS $ is accretive and thus satisfies $\re(\overline{v}^\top S(x)v)\geq \kappa |v|^2$ for some number $\kappa>0$, all $v\in \bbC^n$ and almost all $x\in \Omega_h$.

Under these assumptions, we define the space $H^{\di S\gr}(\Omega)$ as
    \begin{align}
         \left\{h\in H^1(\Omega)~\middle|~ \exists g\in L^2(\Omega): \io S\gr h \cdot \gr \phi \dx x= -\io g \phi \dx x ~\forall \phi \in H^1_0(\Omega)\right\} \label{eq:defofdivSgr}
    \end{align} and set $\di S\gr h = g$ for such $h\in H^{\di S\gr}(\Omega)$.
    We equip $H^{\di S\gr}(\Omega)$ with the norm $$\|h\|_{H^{\di S\gr}(\Omega)} \coloneq  \|h\|^2_{H^1(\Omega)} +  \|\di S \gr h \|^2_{L^2(\Omega)},$$ analogously to $H^\Delta(\Omega)$.
If $\Gamma \subset \partial\Omega$ is relatively open, we set $$H^{\di S \gr}_\Gamma(\Omega)  \coloneq  \{f\in H^{\di S \gr}(\Omega)~|~ \trG(f) = 0 \}.$$

\subsection{Closure Relation Framework}\label{sec:contractionsemigroup}

This section is concerned with the proof of Theorem~\ref{thm:mildsolutions}.
First, we construct an auxiliary operator $\Ae$ which generates unitary semigroup on a larger domain. Using the closure relation formalism developed in \cite{gluck2024stability,gorrec,hansOGpaper}, we extend spectral properties of $\Ae$ to the generator $A_S$ (introduced in Theorem~\ref{thm:mildsolutions}) of the wave-heat system, as was done, e.g., in Theorem 2.3 of \cite{gorrec}.

We begin by considering an auxiliary wave equation  on the whole domain $\Omega$.  \begin{alignat}{2}
    w_{tt}(x,t) &= \Delta w(x,t)~&& \tn{for }x\in \Omega,t\geq 0, \label{eq:auxiliarywave}
\\
\tr(w_t(\cdot,t)) &= 0 ~&&\tn{for }t\geq 0,\nonumber
\end{alignat}
The  closure relation formalism presents a method from port-Hamiltonian systems theory, motivating us to write the wave equation in a port-Hamiltonian formulation, i.e., in a first-order formulation.
By setting $x_1 \coloneq  w_t, x_2=\gr w,$ the wave equation can be written as the abstract Cauchy Problem
\begin{align*}
   \begin{pmatrix}
       \dot{x}_1\\\dot{x}_2
   \end{pmatrix} = \begin{pmatrix}
       0 & \di \\ 
       \gr & 0
   \end{pmatrix}\begin{pmatrix}
       x_1\\x_2
   \end{pmatrix}.
\end{align*}
The domain of the block operator
\begin{align}
    D_D \coloneq \begin{pmatrix}
       0 & \di \\ 
       \gr & 0
   \end{pmatrix}:\dom{D_D}\subset L^2(\Omega)\times L^2(\Omega)^n\to L^2(\Omega)\times L^2(\Omega)^n \label{eq:defofDD}
\end{align} is given by 
$\dom{D_D} =  H^1_0(\Omega)\times H^\di(\Omega)$, encoding the Dirichlet boundary conditions into the system.

We restrict the state variables $x_1,x_2$ to $\Omega_w,\Omega_h$ by setting $w_1 \coloneq x_1\restrict{\Omega_w},w_2 \coloneq x_2\restrict{\Omega_w},h_1 \coloneq x_1\restrict{\Omega_h},h_2 \coloneq x_2\restrict{\Omega_h}. $
The wave equation can then be rewritten in these variables as 
\begin{align}
    \begin{pmatrix}
        \dot{w}_1\\\dot{w}_2\\\dot{h}_1\\\dot{h}_2
    \end{pmatrix}=\underbrace{\begin{pmatrix}
        0 & \di & 0 & 0\\
        \gr & 0 & 0 & 0\\
        0& 0 &0 &\di\\0&0&\gr&0
    \end{pmatrix}}_{=:\,\Ae}\begin{pmatrix}
        w_1\\w_2\\h_1\\h_2
    \end{pmatrix}.\label{eq:defofD_D}
\end{align}
Let $\bbH_1 \coloneq L^2(\Omega_w)\times L^2(\Omega_w)^n\times L^2(\Omega_h), \bbH_2 \coloneq  L^2(\Omega_h)^n,$
the operator $\Ae:\dom{\Ae}\subset \bbH_1\times \bbH_2\to \bbH_1\times \bbH_2$ then has domain $$\dom{\Ae} \coloneq \left\{\begin{pmatrix}
       x_1\restrict{\Omega_w}\\x_2\restrict{\Omega_w}\\x_1\restrict{\Omega_h}\\x_2\restrict{\Omega_h}
    \end{pmatrix}\middle|~x_1\in H^1_0(\Omega), x_2\in H^\di(\Omega)\right\}$$ and, by construction, generates an equivalent abstract Cauchy problem as the Dirichlet block operator $D_D$. 
    
In analogy to Definition 2.1 in \cite{gluck2024stability}, we define 
$A_1:\dom{\Ae}\subset \bbH_1\times \bbH_2\to \bbH_1$ as $$A_1  \coloneq   \begin{pmatrix}
    0 & \di & 0 & 0\\
    \gr&0&0&0\\
    0&0&0&\di
\end{pmatrix}, ~~\dom{A_1} \coloneq \dom{\Ae},$$ and $\Ato:\dom{\Ato}\subset \bbH_1\to \bbH_2$ as \begin{align*}
    \Ato &= \begin{pmatrix}
        0&0&\gr
    \end{pmatrix},\\ \dom{\Ato} &= L^2(\Omega_w)\times L^2(\Omega_w)^n\times H^1(\Omega_h),
\end{align*}
such that 
$$\Ae x = \begin{pmatrix}
  \multicolumn{2}{c}{A_1} \\
  \Ato & 0
\end{pmatrix}x ~~\forall x\in \dom{\Ae}.$$
By following the construction in Definition 2.1 (b) of \cite{gluck2024stability}, 
we obtain an operator $A_S:\dom{A_S}\subset \bbH_1\to \bbH_1$ given by \begin{align*} 
    A_S\begin{pmatrix}
        w_1\\w_2\\h_1
    \end{pmatrix}  \coloneq  A_1\begin{pmatrix}
        w_1\\w_2\\h_1\\S \Ato \begin{pmatrix}
            w_1\\w_2\\h_1
        \end{pmatrix}
    \end{pmatrix} = \begin{pmatrix}
        0&\di & 0\\
        \gr &0 &0\\ 
        0 &0&\di S \gr
    \end{pmatrix}\begin{pmatrix}
        w_1\\w_2\\h_1
    \end{pmatrix}
\end{align*}
with \begin{align}
    \dom{A_S}& \coloneq \left\{\begin{pmatrix}
        w_1\\w_2\\h_1
    \end{pmatrix}\in \dom{\Ato}~\middle|\begin{pmatrix}
        w_1\\w_2\\h_1\\S\gr h_1
    \end{pmatrix}\in\dom{\Ae}\right\}
    \label{eq:defofAS}
    \\&= \left\{\begin{pmatrix}
        w_1\\w_2\\h_1
    \end{pmatrix}\middle|~ \begin{pmatrix}
        w_1\\w_2\\h_1\\S\gr h_1
    \end{pmatrix} = \begin{pmatrix}
       x_1\restrict{\Omega_w}\\x_2\restrict{\Omega_w}\\x_1\restrict{\Omega_h}\\x_2\restrict{\Omega_h}
    \end{pmatrix},~x_1\in H^1_0(\Omega), x_2\in H^\di(\Omega)\right\}.\nonumber
\end{align}
This operator is in fact equal to the operator $A_S$ in Theorem~\ref{thm:mildsolutions}. Indeed, using the patching results in Proposition~\ref{prop:patching}, we find 
\begin{align} \dom{A_S} &= \left\{
            \begin{pmatrix}
                w_1\\w_2\\h_1
            \end{pmatrix}
            \in H^1_{\Gamma_w}(\Omega_w)\times H^\di(\Omega_w)\times H^{\di S \gr}_{\Gamma_h}(\Omega_h)~ \right| \nonumber\\&\hspace{6ex}  \trg(w_1)=\trg(h_1), \trg_N(w_2)=-\trg_N(S \gr h_1)\left. \rule{0cm}{0.75cm}\right\}.\label{eq:domAS} 
\end{align}

This leads us to our first intermediate result regarding the semigroup generated by $A_S$, which we denote by $T_S$ as in Theorem~\ref{thm:mildsolutions}.

\begin{lemma}
    \label{lem:contractive} 
    The operator $\Ae:\dom{\Ae}\subset \bbH_1 \times \bbH_2\to \bbH_1\times \bbH_2$ is skew-adjoint and $A_S:\dom{A_S}\subset \bbH_1 \to \bbH_1$  generates a contractive $C_0$-semigroup.
\end{lemma}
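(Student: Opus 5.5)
The plan has two parts: show that $D_D$ is skew-adjoint, and then either invoke the closure-relation theorem of \cite{gluck2024stability} or verify dissipativity and maximality of $A_S$ by hand.

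\textbf{Skew-adjointness of $\Ae$.} The restriction map $(x_1,x_2)\mapsto (x_1\restrict{\Omega_w},x_2\restrict{\Omega_w},x_1\restrict{\Omega_h},x_2\restrict{\Omega_h})$ is a unitary map from $L^2(\Omega)\times L^2(\Omega)^n$ onto $\bbH_1\times\bbH_2$, because $\partial\Omega_w\cap\partial\Omega_h$ has Lebesgue measure zero. This map intertwines $D_D$ with $\Ae$. It therefore suffices to show that $D_D$ with $\dom{D_D}=H^1_0(\Omega)\times H^\di(\Omega)$ is skew-adjoint.

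For skew-symmetry, take $(x_1,x_2),(y_1,y_2)\in\dom{D_D}$. The identity \eqref{eq:normaltraceidentitybydensity} together with $\tr(x_1)=\tr(y_1)=0$ gives $\io \di(x_2)\overline{y_1}=-\io x_2\cdot\gr\overline{y_1}$, and likewise with the roles exchanged. Hence $\langle D_Dx,y\rangle=-\langle x,D_Dy\rangle$.

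For the reverse inclusion $D_D^*\subset -D_D$, let $(y_1,y_2)\in\dom{D_D^*}$ with $D_D^*y=(z_1,z_2)$.
\begin{itemize}
\item Testing with $(0,x_2)$, $x_2\in C_c^\infty(\Omega)^n$, gives $\gr y_1=-z_2$ distributionally, so $y_1\in H^1(\Omega)$.
\item Testing with $(x_1,0)$, $x_1\in C_c^\infty(\Omega)$, gives $\di y_2=-z_1$, so $y_2\in H^\di(\Omega)$.
\item Testing with $(0,x_2)$ for arbitrary $x_2\in H^1(\Omega)^n$ and using \eqref{eq:normaltraceidentitybydensity} (applied to $y_1\in H^1$, $x_2$ the field) gives $\int_{\dO}\tr(y_1)\, x_2\cdot\nu=0$. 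Since $\tr_N$ restricted to $H^1(\Omega)^n$ has dense range in $H^{-1/2}(\dO)$ (its range contains $\tr(H^1)\cdot\nu$-type fields, or one uses surjectivity of $\tr_N$ on $H^\di$ together with density of $H^1(\Omega)^n$ in $H^\di(\Omega)$), we get $\tr(y_1)=0$.
\end{itemize}
Thus $y\in\dom{D_D}$ and $D_D^*y=-D_Dy$, so $D_D$ is skew-adjoint.

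\textbf{Generation for $A_S$, via the closure relation.} $A_S$ arises from $\Ae$ through the closure relation $h_2=S\Ato(w_1,w_2,h_1)$ with a bounded accretive multiplication operator $S$ on $\bbH_2$. The abstract result Theorem 2.3 in \cite{gorrec} (equivalently Definition 2.1 and the main generation theorem of \cite{gluck2024stability}) states that if $\Ae$ is skew-adjoint and the closure operator is bounded and accretive, then the resulting $A_S$ generates a contraction semigroup. If citing this is acceptable, the proof ends here after checking the hypotheses, which are immediate.

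\textbf{Direct verification (Lumer--Phillips).} For $u=(w_1,w_2,h_1)\in\dom{A_S}$, extend it to $x=(w_1,w_2,h_1,S\gr h_1)\in\dom{\Ae}$ and apply skew-adjointness:
\[
0=\re\langle \Ae x,x\rangle=\re\langle A_Su,u\rangle_{\bbH_1}+\re\langle \gr h_1,S\gr h_1\rangle_{\bbH_2},
\]
so $\re\langle A_Su,u\rangle\le-\kappa\|\gr h_1\|^2\le0$. The same argument applies to the adjoint with $S^*$, or one proves the range condition directly.

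The main obstacle is maximality, namely that $I-A_S$ is surjective. Given $f=(f_1,f_2,f_3)\in\bbH_1$, first eliminate $w_2=\gr w_1-f_2$. The resolvent problem then reduces to finding $\pi\in H^1_0(\Omega)$, with $\pi\restrict{\Omega_w}=w_1$ and $\pi\restrict{\Omega_h}=h_1$, solving the weak problem
\[
\int_{\Omega_w}\left(\pi\bar\phi+\gr\pi\cdot\gr\bar\phi\right)+\int_{\Omega_h}\left(\pi\bar\phi+S\gr\pi\cdot\gr\bar\phi\right)=\text{(bounded antilinear functional of }\phi)
\]
for all $\phi\in H^1_0(\Omega)$. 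This sesquilinear form is bounded and coercive on $H^1_0(\Omega)$ because $S$ is accretive, so Lax--Milgram gives a solution. Proposition~\ref{prop:patching} then recovers the interface conditions, showing that the resulting $u$ lies in $\dom{A_S}$ as described in \eqref{eq:domAS}. Lumer--Phillips then yields the contractive $C_0$-semigroup.
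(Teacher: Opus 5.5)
Your proof is correct and follows the paper's route: you transfer $\Ae$ to $D_D$ on $\Omega$ via the restriction map, obtain skew-adjointness from the fact that $-\di$ on $H^\di(\Omega)$ is the adjoint of $\gr$ on $H^1_0(\Omega)$, and get generation for $A_S$ from the closure-relation theorem in \cite{gorrec} (the paper cites Theorem~2.2 there). Your optional Lumer--Phillips/Lax--Milgram check is also sound and just makes that citation self-contained; two small slips: the resolvent equation gives $w_2=\gr w_1+f_2$ rather than $\gr w_1-f_2$, and in the adjoint step you can test directly with $(0,x_2)$, $x_2\in H^\di(\Omega)$, and use surjectivity of $\tr_N$, which avoids the density detour through $H^1(\Omega)^n$.
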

\begin{proof}

To show that $A_S$ generates a contractive $C_0$-semigroup, it suffices to show that   $\Ae$ generates a contractive $C_0$-semigroup on $\bbH_1\times \bbH_2$, see Theorem 2.2 in \cite{gorrec}.
In fact, we show that $\Ae$ is skew-adjoint and generates a unitary semigroup. 

Since $\Ae$ was constructed to generate an equivalent abstract Cauchy problem as $D_D$, it suffices to show that $D_D$ is skew-adjoint on the Hilbert-space  $L^2(\Omega)\times L^2(\Omega)^n$.
This directly follows from the fact that $ - \di:H^\di(\Omega)\to L^2(\Omega)$ is the adjoint operator to $\gr:H^1_0(\Omega)\to L^2(\Omega)^n$ with respect to the $L^2(\Omega)$-inner product.
\end{proof}

\subsection{Strong Stability}\label{sec:strongstability}

As a second step, we deduce  some spectral properties of $A_S$. The closure relation framework allows us to reduce the problem to the simpler, Dirichlet wave equation generators $\Ae$ and $D_D$.
The upcoming proofs closely follow the strategy of Theorem 3.2 and Theorem 3.6 in \cite{gluck2024stability}.

\begin{proposition} \label{prop:kernelandspectrum}
The kernel of $A_S$ is given by $$\ker(A_S) = \left\{\begin{pmatrix}
        0\\w_2\\0
    \end{pmatrix}~\middle|~ w_2 \in H^\di(\Omega_w), \di w_2 = 0, \trg_N(w_2)=0\right\}$$ and the
    the spectrum of $A_S$ on the imaginary axis is given by $\sigma(A_S)\cap i\R = \{0\}$. 
\end{proposition}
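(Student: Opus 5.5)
The argument splits into two parts: an energy computation for the kernel and the point spectrum, and a compactness/Fredholm argument, via the closure-relation structure, to exclude approximate eigenvalues.

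\textbf{Energy identity.} For $x=(w_1,w_2,h_1)\in\dom{A_S}$, extend by $h_2 \coloneq S\gr h_1$ to a vector in $\dom{\Ae}$. Skew-adjointness of $\Ae$ (Lemma~\ref{lem:contractive}) then gives
\[
\re\sca{A_S x}{x}_{\bbH_1}=-\re\ioh S\gr h_1\cdot\overline{\gr h_1}\dx x\le -\kappa\|\gr h_1\|^2_{L^2(\Omega_h)^n}.
\]
This is a direct consequence of
\[
0=\re\sca{\Ae(x,h_2)}{(x,h_2)} = \re\sca{A_Sx}{x}+\re\sca{\gr h_1}{S\gr h_1}.
\]

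\textbf{Kernel and nonzero imaginary eigenvalues.} Suppose $A_Sx=i\beta x$ with $\beta\in\R$. The energy identity forces $\gr h_1=0$, so $h_1=0$ because $\trGh(h_1)=0$ and $\Gamma_h$ is nonempty. The transmission conditions then give $\trg(w_1)=0$ and $\trg_N(w_2)=-\trg_N(S\gr h_1)=0$.

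If $\beta=0$, we have $\gr w_1=0$ and $\di w_2=0$. Since $w_1\in H^1_{\Gamma_w}(\Omega_w)$ with $\trg(w_1)=0$, it follows that $w_1=0$, which yields the stated kernel. Conversely, any $(0,w_2,0)$ with $\di w_2=0$ and $\trg_N(w_2)=0$ lies in $\dom{A_S}$: the zero extension of $w_2$ is in $H^\di(\Omega)$ by Proposition~\ref{prop:patching}, and $h_1=0$.

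If $\beta\neq0$, then $w_2=\gr w_1/(i\beta)$, so $w_1\in H^1_0(\Omega_w)$ satisfies $\Delta w_1=-\beta^2w_1$ with vanishing Neumann trace on $\gamma$. Extending $w_1$ by zero to $\Omega$ gives, by Proposition~\ref{prop:patching}(3), a Dirichlet eigenfunction of $\Delta$ on $\Omega$ that vanishes on the open set $\Omega_h$. Unique continuation for the Laplacian on the connected domain $\Omega$ then gives $w_1=0$, hence $w_2=0$.

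\textbf{Excluding approximate eigenvalues.} We must also rule out approximate point spectrum and residual spectrum on $i\R\setminus\{0\}$. Residual spectrum is excluded because $A_S^*$ has the same structure, with $S^*$ in place of $S$ and reversed sign on the wave block, so the eigenvalue argument applies to it as well. For surjectivity of $i\beta-A_S$, I would follow Theorem 3.2 of \cite{gluck2024stability} and use the closure relation. The resolvent of $D_D$ is compact for $\beta$ outside the discrete Dirichlet spectrum, since $\dom{D_D}$ embeds compactly modulo the divergence-free part, which lies in the kernel and is harmless for $\beta\ne0$. Thus $i\beta-A_S$ is a compact perturbation of an invertible operator and hence Fredholm of index zero. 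Together with injectivity, this gives $i\beta\in\rho(A_S)$.

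The main obstacle is this Fredholm step: the compact perturbation has to be built concretely, for instance by solving the $h$-equation via Lax--Milgram with $S$ and reducing to a wave resolvent problem on $\Omega_w$ with an impedance-type boundary operator on $\gamma$. An alternative is a contradiction argument with a normalized approximate eigenvector sequence. Dissipation makes $\gr h_1^{(k)}\to0$, so $h_1^{(k)}\to0$ in $H^1$. The resolvent of the Dirichlet wave generator on $\Omega$, applied to the patched functions, gives compactness of the $w$-part away from the kernel. A limit would then be a genuine eigenvector, which was excluded above.
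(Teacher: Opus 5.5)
Your energy identity, the kernel computation and the exclusion of eigenvalues $i\beta$, $\beta\neq0$, are sound apart from one slip. That part ends where the paper ends: a Dirichlet eigenfunction on $\Omega$ vanishing on $\Omega_h$, excluded by unique continuation.

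The slip is that $\Gamma_h$ may be empty. The paper explicitly allows $\Omega_h$ to be surrounded by $\Omega_w$, so you cannot conclude $h_1=0$ from $\trGh(h_1)=0$. It is easily repaired:
\begin{itemize}
\item for $\beta\neq0$, $\gr h_1=0$ gives $i\beta h_1=\di S\gr h_1=0$;
\item for $\beta=0$, patch $w_1,h_1$ into $x_1\in H^1_0(\Omega)$ with $\gr x_1=0$ on all of $\Omega$, so $x_1=0$. This is exactly the paper's argument.
\end{itemize}

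\textbf{The gap.} The real gap is the step that carries the proposition: excluding spectrum on $i\R\setminus\{0\}$ that is not point spectrum. Your Fredholm paragraph does not establish this, for two reasons.
\begin{itemize}
\item $D_D$ has the infinite-dimensional kernel $\{0\}\times\{v\in H^\di(\Omega)\mid \di v=0\}$ and acts on $L^2(\Omega)^{1+n}$, not on $\bbH_1$. So you have not identified any invertible operator on $\bbH_1$ of which $i\beta-A_S$ is a compact perturbation.
\item The frequencies with $\beta^2$ a Dirichlet eigenvalue of $-\Delta$ on $\Omega$, where $i\beta-D_D$ is not invertible, are not covered at all.
\end{itemize}

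\textbf{How to close it.} Your fallback contradiction argument is the right route and is essentially the paper's proof. It needs three ingredients you only gesture at.

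First, $\sigma(A_S)$ lies in the closed left half-plane because $A_S$ generates a contraction semigroup. So each $i\beta\in\sigma(A_S)$ is a boundary point of the spectrum, hence an approximate eigenvalue. This also makes your unproved adjoint/residual-spectrum claim superfluous.

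Second, take a normalized approximate eigenvector $x_k$. Dissipation gives $\gr h_1^{(k)}\to0$, hence $S\gr h_1^{(k)}\to0$. The patched vectors $X_k=\bigl((w_1^{(k)},h_1^{(k)}),(w_2^{(k)},S\gr h_1^{(k)})\bigr)\in\dom{D_D}$ then satisfy $(i\beta-D_D)X_k\to0$ and $\|X_k\|\to1$. Their component in $\ker(D_D)$ has norm at most $|\beta|^{-1}\|(i\beta-D_D)X_k\|\to0$. You do not need $h_1^{(k)}\to0$ in $H^1$; that claim again silently used $\Gamma_h\neq\varnothing$.

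Third, $D_D$ restricted to $\overline{\Ima(D_D)}=L^2(\Omega)\times\gr H^1_0(\Omega)$ has compact resolvent. The paper proves this in Lemma~\ref{lem:Aextcompactresolvent} by writing $x_2=\gr p$ with $p\in H^\Delta_0(\Omega)$ and combining Rellich with $\|\gr p\|^2\le\|\Delta p\|\,\|p\|$.

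With these, a subsequence of $X_k$ converges to a unit eigenvector of $D_D$ for $i\beta$ whose second component vanishes on $\Omega_h$. Equivalently, by closedness, it converges to a unit eigenvector of $A_S$. Either way this contradicts your eigenvalue step.
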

\begin{proof}
      Theorem 3.2 in \cite{gluck2024stability} implies that the kernel of $A_S$ is given by $$ \{v~|~ (v,0)  \in \ker(\Ae)\}.$$ 
    Assume $v = (w_1,w_2,h_1) \in  L^2(\Omega_w)\times L^2(\Omega_w)^n\times L^2(\Omega_h)$ satisfies $A_s v=0$, by the construction of $\Ae$ from $D_D$ we find that there exists an  $x = (x_1,x_2)  \in \dom{D_D} = H^1_0(\Omega)\times H^\di(\Omega)$ with $$x_1\restrict{\Omega_w} = w_1,~x_1\restrict{\Omega_h} = h_1,~x_2\restrict{\Omega_w} = w_2,~x_2\restrict{\Omega_h} = 0$$ and $$\gr x_1\restrict{\Omega_w} = \gr w_1 = 0,~\gr x_1\restrict{\Omega_h} = \gr h_1 = 0,~ \di x_2\restrict{\Omega_w} = \di w_2 = 0.$$
    Together with $\tr(x_1) = 0$, this implies $x_1 = 0$.
    
    We also note that the patching of $w_2 \in H^\di(\Omega_w)$ and the zero function in $H^\di(\Omega_h)$, which is precisely $x_2$, is an element of $H^\di(\Omega)$. For this, Proposition~\ref{prop:patching}.(ii) necessitates that their normal traces match, i.e., $\trg_N(w_2) = 0.$ This proves the inclusion $$\ker(A_S) \subset \left\{\begin{pmatrix}
        0\\w_2\\0
    \end{pmatrix}~\middle|~ w_2 \in H^\di(\Omega_w), \di w_2 = 0, \trg_N(w_2)=0\right\}.$$
    The converse inclusion is clear.

  We continue with the proof of the second claim, for which we assume, by contradiction, that there exists an element $i\lambda \in \sigma(A_S)$ with $\lambda \in \R\backslash\{0\}$.
  Following the proof of Theorem 3.6 in \cite{gluck2024stability}, we observe, after rescaling, that there exists an approximate eigenfunction
  $$(w_k)_{k\in \N} = \begin{pmatrix}
      v_k \\ S\Ato v_k
  \end{pmatrix}_{k\in \N}\subset \dom{\Ae} $$ with $$
     \lim_{k\to \infty}\|(i\lambda-\Ae)w_k\|_{\bbH^1\times \bbH^2} = 0, \lim_{k\to \infty}\|S\Ato v_k\|_{ \bbH^2} = 0\tn{ and }   \|w_k\|_{\bbH^1} = 1.
  $$
 As $\Ae$ is skew-adjoint, we find that there exists a orthogonal projection $\bbP$ on $\bbH_1\times \bbH_2$ with $\ker(\bbP) = \ker(\Ae),$ $\Ima(\bbP) = \overline{\Ima(\Ae)}$, and $\Ae \bbP = \bbP\Ae$ on $\dom{\Ae}$. The sequence $((1-\bbP)w_k )_{k\in \N}\subset \ker(\Ae)$ then satisfies
  \begin{align*}
      \lim_{k\to \infty}\|(1-\bbP) w_k\|_{\bbH^1\times \bbH^2} &= \lim_{k\to \infty}\frac{1}{|\lambda|}\|(i\lambda-\Ae) (1-\bbP)w_k\|_{\bbH^1\times \bbH^2}\\&=\lim_{k\to \infty}\frac{1}{|\lambda|}\|(1-\bbP)(i\lambda-\Ae) w_k\|_{\bbH^1\times \bbH^2}\\&\leq\lim_{k\to \infty}\frac{1}{|\lambda|}\|(i\lambda-\Ae) w_k\|_{\bbH^1\times \bbH^2} = 0.
  \end{align*} 
  Hence, the sequence $(\bbP w_k )_{k\in \N}\subset \overline{\Ima(\Ae)}$ satisfies $\lim_{k\to \infty}\|\bbP w_k\|_{\bbH^1\times \bbH^2} =1$ and $$\lim_{k\to \infty}\|(i\lambda-\Ae) \bbP w_k\|_{\bbH^1\times \bbH^2} =\lim_{k\to \infty}\|\bbP(i\lambda-\Ae) w_k\|_{\bbH^1\times \bbH^2} = 0.$$
  This means that $\bbP w_k$ is an approximate eigenfunction of $$\Ae\restrict{\overline{\Ima(\Ae)}}:\dom{\Ae}\cap \overline{\Ima(\Ae)}\subset \overline{\Ima(\Ae)}\to \overline{\Ima(\Ae)}$$ to the approximate eigenvalue $i\lambda$.
We note that $(\bbP w_k)_{k\in \N}$ is bounded with respect to the graph norm of $\Ae$.

In the following, we use that $\Ae\restrict{\overline{\Ima(\Ae)}} $  has compact resolvent. This is valid, as is shown in Lemma~\ref{lem:Aextcompactresolvent}. Under this assumption, we find that there exists a subsequence $(\bbP w_{k_j})_{j\in \N}$ of $(\bbP w_{k})_{k\in \N}$ which converges to an element $u \in \overline{\Ima(\Ae)}$.
In particular, 
$$\Ae \bbP w_{k_j} = i\lambda-(i\lambda-\Ae)\bbP w_{k_j}$$ converges to $i\lambda u$ for $j\to \infty$, proving that $u$ is an eigenfunction of $\Ae$ with eigenvalue $i\lambda$ due to the closedness of $\Ae.$
Moreover, the rear component of 
$$\bbP w_k = \bbP \begin{pmatrix}
    v_k \\ S\Ato v_k
\end{pmatrix} $$ converges to $0$, since $S\Ato v_k$ converges to zero by assumption and as every component of $(1-\bbP)w_k$ converges to zero. Thus,  $u =(u_1,u_2,u_3,u_4)  \in \dom{\Ae}\cap \overline{\Ima(\Ae)}$ satisfies $u_4 = 0$.

By the construction of $\Ae$ from $D_D$, there hence exists an element $x = (x_1,x_2)\in \dom{D_D} = H^1_0(\Omega)\times H^\di(\Omega)$  
 with $$x_1\restrict{\Omega_w} = u_1,~x_1\restrict{\Omega_h} = u_3,~x_2\restrict{\Omega_w} = u_2,~x_2\restrict{\Omega_h} = 0$$ and $$D_Dx = \begin{pmatrix}
     \di x_2 \\\gr x_1
 \end{pmatrix} = i\lambda \begin{pmatrix}
     x_1\\x_2
 \end{pmatrix}.$$

We deduce $\Delta x_1 + \lambda^2 x_1 = 0$ and $\gr x_1 \restrict{\Omega_h} = 0$. The analyticity of eigenfunction of the Laplacian, see, e.g.,  Theorem 1.2 in Part 3 of \cite{friedmanpde}, then guarantees that $x_1$ is constant on the whole domain $\Omega$, which together with $\tr(x_1) = 0$ yields $x_1 = 0 = x_2$. However, this contradicts the fact that $u$ is non-zero and hence shows that our initial assumption $i\lambda \in \sigma(A_S)$ must be incorrect.
\end{proof}
\begin{lemma}\label{lem:Aextcompactresolvent}
  
As claimed in the previous proof, $$\Ae\restrict{\overline{\Ima(\Ae)}}:\dom{\Ae}\cap \overline{\Ima(\Ae)}\subset \overline{\Ima(\Ae)}\to \overline{\Ima(\Ae)}$$ has compact resolvent.
\end{lemma}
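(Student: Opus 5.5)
Since $\Ae$ is, up to the unitary identification $J:L^2(\Omega)\times L^2(\Omega)^n\to \bbH_1\times\bbH_2$ given by restriction to $\Omega_w,\Omega_h$, the same operator as $D_D$ (that is, $\Ae = J D_D J^{-1}$ with $\dom{\Ae}=J\dom{D_D}$), I would prove the statement for $D_D$ restricted to $\overline{\Ima(D_D)}$ and transport it back via $J$. Since $J$ maps $\ker(D_D)$ to $\ker(\Ae)$ and $\Ima(D_D)$ to $\Ima(\Ae)$, compactness of the resolvent is preserved. Skew-adjointness of $D_D$ (Lemma~\ref{lem:contractive}) gives the orthogonal decomposition
\[
L^2(\Omega)\times L^2(\Omega)^n=\ker(D_D)\oplus\overline{\Ima(D_D)}.
\]
Moreover, $D_D$ restricted to $\overline{\Ima(D_D)}$ is again skew-adjoint and injective. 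It therefore suffices to show that the embedding of $\dom{D_D}\cap\overline{\Ima(D_D)}$, equipped with the graph norm, into $L^2(\Omega)\times L^2(\Omega)^n$ is compact. Then $(1-D_D)^{-1}$ is compact on $\overline{\Ima(D_D)}$.

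The first step is to identify the two spaces. We have $\ker(D_D)=\{0\}\times\{x_2\in H^\di(\Omega)\mid \di x_2=0\}$, because $\gr x_1=0$ together with $x_1\in H^1_0(\Omega)$ forces $x_1=0$. Hence
\[
\overline{\Ima(D_D)}=L^2(\Omega)\times\{\di\text{-free fields}\}^\perp .
\]
The orthogonal complement of the divergence-free fields (with no boundary condition imposed) in $L^2(\Omega)^n$ is exactly $\gr H^1_0(\Omega)$. This follows from the Helmholtz decomposition $L^2(\Omega)^n=\gr H^1_0(\Omega)\oplus\{f\mid\di f=0\}$: for $f\in L^2$, solve $\Delta\phi=\di f$ with $\phi\in H^1_0(\Omega)$ by Lax--Milgram, so that $f-\gr\phi$ is divergence-free. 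Orthogonality holds because $\int f\cdot\gr\psi=-\int \di f\,\psi=0$ for $\psi\in H^1_0(\Omega)$, by \eqref{eq:normaltraceidentitybydensity} with vanishing trace. Consequently,
\[
\dom{D_D}\cap\overline{\Ima(D_D)}=\{(x_1,\gr\phi)\mid x_1\in H^1_0(\Omega),\ \phi\in H^1_0(\Omega),\ \Delta\phi\in L^2(\Omega)\}.
\]

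The second step is to take a sequence $(x_1^k,\gr\phi^k)$ that is bounded in the graph norm. Then $x_1^k$ is bounded in $H^1_0(\Omega)$, and by Rellich's theorem (valid on bounded Lipschitz domains) a subsequence converges in $L^2(\Omega)$. For the second component, $\phi^k$ is bounded in $H^1_0(\Omega)$ by Poincaré's inequality and $\Delta\phi^k$ is bounded in $L^2(\Omega)$. Passing to a subsequence, $\phi^k\to\phi$ in $L^2(\Omega)$ by Rellich, and $\Delta\phi^k\rightharpoonup g$ weakly. Then
\[
\|\gr(\phi^k-\phi^m)\|_{L^2}^2=-\io \Delta(\phi^k-\phi^m)\,\overline{(\phi^k-\phi^m)}\dx x\le \|\Delta(\phi^k-\phi^m)\|_{L^2}\,\|\phi^k-\phi^m\|_{L^2}\to 0 .
\]
Here the integration by parts is justified by \eqref{eq:normaltraceidentitybydensity}, since $\phi^k-\phi^m\in H^1_0(\Omega)$ has vanishing trace. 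Thus $\gr\phi^k$ is Cauchy in $L^2(\Omega)^n$, and this proves compactness.

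I expect the main point requiring care to be the correct identification of $\overline{\Ima(D_D)}$ in the second component: one must restrict to gradients, since the full $H^\di$ does not embed compactly because of the infinite-dimensional kernel. This identification also needs only Lipschitz regularity, which the argument above uses through Lax--Milgram and Rellich alone, without any $H^2$ regularity. Everything else is a routine transfer through $J$.
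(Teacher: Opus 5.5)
Your proof is correct and follows essentially the same route as the paper: transfer to $D_D$, identify $\overline{\Ima(D_D)}=L^2(\Omega)\times\gr H^1_0(\Omega)$ via the Helmholtz decomposition, then use Rellich for $x_1$ and potentials $\phi^k\in H^\Delta_0(\Omega)$ with the integration-by-parts bound $\|\gr\phi\|^2\le\|\Delta\phi\|\,\|\phi\|$ for the second component. Applying this bound to the differences $\phi^k-\phi^m$ to get a Cauchy sequence is the right form of that step; the paper applies it to $\phi^{k_j}$ alone and claims a limit of $0$, which does not follow unless the $L^2$-limit is zero.
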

 \begin{proof}
By the construction of $\Ae$ from $D_D$, we have  
 $$\dom{\Ae} \coloneq \left\{\begin{pmatrix}
       x_1\restrict{\Omega_w}\\x_2\restrict{\Omega_w}\\x_1\restrict{\Omega_h}\\x_2\restrict{\Omega_h}
    \end{pmatrix}~\middle|~x = \begin{pmatrix}
        x_1\\x_2
    \end{pmatrix}\in \dom{D_D} = H^1_0(\Omega)\times H^\di(\Omega)\right\}$$ and
    \begin{align*}
        \Ae \begin{pmatrix}
       x_1\restrict{\Omega_w}\\x_2\restrict{\Omega_w}\\x_1\restrict{\Omega_h}\\x_2\restrict{\Omega_h}
    \end{pmatrix} = \begin{pmatrix}
       (D_Dx)_1\restrict{\Omega_w}\\(D_Dx)_2\restrict{\Omega_w}\\(D_Dx)_1\restrict{\Omega_h}\\(D_Dx)_2\restrict{\Omega_h}
    \end{pmatrix}.
    \end{align*}
    In particular, $(  x_1\restrict{\Omega_w},x_2\restrict{\Omega_w},x_1\restrict{\Omega_h},x_2\restrict{\Omega_h})$ is an element of $\overline{\Ima(\Ae)} = \ker(\Ae)^\perp$
    if and only if $x = (x_1,x_2)$ is an element of $$\overline{\Ima(D_D)} = \ker(D_D)^\perp = ({0}\times \{v\in H^\di(\Omega)~|~\di v = 0\})^\perp = L^2(\Omega)\times \gr H^1_0(\Omega).$$ 
    This uses the orthogonal Helmholtz decomposition 
    $$L^2(\Omega) = \gr H^1_0(\Omega) \oplus  \{v\in H^\di(\Omega)~|~\di v = 0\}.$$

    Now assume that  $(  x_1^k\restrict{\Omega_w},x_2^k\restrict{\Omega_w},x_1^k\restrict{\Omega_h},x_2^k\restrict{\Omega_h})_{k \in \N}$ is an arbitrary, bounded (with respect to the graph norm) sequence in $$\dom{\Ae\restrict{\overline{\Ima(\Ae)}}},$$ i.e., $ (x_1^k,x_2^k)_{k\in \N}$ is a sequence in 
    $$\dom{D_D}\cap\overline{\Ima(D_D)} = H^1_0(\Omega)\times (\gr H^1_0(\Omega)\cap H^\di(\Omega))$$ with 
    $$\sup_{k\in \N} \left\|x_1^k \right\|_{H^1(\Omega)}+\left\|x_2^k\right\|_{H^\di(\Omega)}<\infty.$$

To show that $\Ae\restrict{\overline{\Ima(\Ae)}}$ has compact resolvent, it thus suffices to find a subsequence of $(x_1^k,x_2^k)_{k\in \N}$ which converges in $L^2(\Omega)\times L^2(\Omega)^n$. As $H^1(\Omega)$ embeds compactly into $L^2(\Omega)$, it actually suffices to find a subsequence of $(x_2^k)_{k\in \N}$ which converges in $ L^2(\Omega)^n$. 

We note that since $(x_2^k)_{k\in \N}\subset H^\di(\Omega)\cap \gr H^1_0(\Omega)$, there exists a sequence of potentials $(p^k)_{k\in \N}\subset H^\Delta_0(\Omega)$ with $\gr p^k = x_2^k, k\in \N$. The Poincaré inequality yields that $(p^k)_{k\in \N}$ is even a bounded sequence in $H^\Delta_0(\Omega)$. 
As $H^1_0(\Omega)$ embeds compactly into $L^2(\Omega)$ by the Rellich-Kondrachov Theorem, we find a subsequence $(p^{k_j})_{j\in \N}$ which converges in $L^2(\Omega)$ and therefore
\begin{align*}
    \lim_{j\to \infty}\left\|x^{k_j}_2\right\|_{L^2(\Omega)}  &= \lim_{j\to \infty} \io  |\gr p^{k_j}|^2\dx x 
    =  \lim_{j\to \infty} \io  |\Delta p^{k_j}||p^{k_j}| \dx x
    \\&\leq  \lim_{j\to \infty} \left\|\Delta p^{k_j}\right\|_{L^2(\Omega)}\left\|p^{k_j}\right\|_{L^2(\Omega)} \leq  \lim_{j\to \infty} \left\|p^{k_j}\right\|_{H^\Delta(\Omega)}\left\|p^{k_j}\right\|_{L^2(\Omega)} = 0,
\end{align*}
which completes the proof.
 \end{proof}

We are now in a position to present the proof of the first main result Theorem~\ref{thm:mildsolutions}. \\

\textit{Proof of Theorem~\ref{thm:mildsolutions}.}
The kernel of $A_S$ has already been determined in Proposition~\ref{prop:kernelandspectrum}. 
Additionally, the Helmholtz decomposition with mixed boundary conditions, see  Theorem 4.2 in \cite{hodgeonforms}, yields $$\{w\in H^\di(\Omega_w)~|~ \di w = 0, \trg_N(w)=0\}^\perp = \gr H^1_{\Gamma_w}(\Omega_w)$$ and thus \begin{align*}
    \ker(A_S)^\perp = L^2(\Omega_w)\times \gr H^1_{\Gamma_w}(\Omega_w) \times L^2(\Omega_h).
\end{align*}

We have also already verified that $A_S$ generates a contractive semigroup in Lemma~\ref{lem:contractive}, which we denote by $T_S(t), t\geq 0$ in the following. It remains to show that $T_S(t)$ is strongly convergent and that $\TS(t) \coloneq T_S(t)\restrict{\ker(A_S)^\perp}$ is well-defined and strongly stable. 

For this, we first have to verify that $\ker(A_S)^\perp$ is invariant under $T_S(t)$. The semigroup $T_S(t)$ is contractive and hence mean-ergodic, see, e.g.,  Corollary V.4.6 and Example V.4.7 in \cite{engel2000one}.
The contractivity also implies that the mean ergodic projection 
$$P:\bbH_1 \to \bbH_1, x\mapsto \lim_{t\to \infty } \frac{1}{t}\int_0^s T_S(t)x\dx s $$
 exists and has norm $\leq 1$, so $P$ is in fact an orthogonal projection.
Lemma~V.4.4 in \cite{engel2000one} then yields the equality $$\ker(A_S)^\perp = \overline{\Ima(P)}^\perp = \ker(P),$$ and $\ker(P)$ is an invariant subspace under the semigroup $T_S(t)$ since $P$ commutes with $T_S(t)$.
Thus, the restricted semigroup $\TS$ is well-defined, contractive and has generator $\AS \coloneq A_S\restrict{\ker(A_S)^\perp}$ on $\ker(A_S)^\perp$.

We find 
$\sigma(A_S^\perp)\cap i\R \subset \sigma(A_S)\cap i\R =\{0\},$ but $0$ is clearly not an eigenvalue of $A^\perp_S$. 
The ABLV-Theorem, see Theorem V.2.21 and Corollary V.2.22 in \cite{engel2000one}, therefore implies that $\TS(t)$ is strongly stable, which is what we wanted to show. 
Combining this with the fact that $T_S(t)\restrict{\ker(A_S)}$ is constant, we indeed find that $T_S(t)$ is strongly convergent. 
This completes the proof of Theorem~\ref{thm:mildsolutions}. \qed \\

\section{Characterizations of Non-Uniform Decay Rates}
\label{sec:mainnonuniform}

Having established strong asymptotic stability of the contractive semigroup $\TS$ generated by $\AS$ in Section~\ref{sec:mainstability}, we now focus on finding specific rates for the convergence of $\|\TS(t)x\|_{\bbH_1}$ to zero. 
A uniform decay rate, i.e., a stability result of the form 
$$\lim_{t\to \infty}\|\TS(t)\|_{\bbH_1\to \bbH_1 } = 0,$$ would imply exponentially fast decay by the semigroup property. Such a stability result on $\TS$ (in the simplest case $S\equiv 1$), however, was shown to not hold on any Lipschitz interface triple $(\Omega,\Omega_w,\Omega_h)$, under some technical smoothness assumptions on $\Gamma_w,\gamma$, see Chapter~6 in \cite{zuazualongtime}.

We thus restrict ourselves to investigating non-uniform decay rates.
As before, we denote by $\|\argument\|_A$ the graph norm of some operator $A$. 
A non-uniform decay rate on $\TS$ is an estimate of the form 
\begin{align*}
    \left\|\TS(t)x\right\|_{\bbH_1} \leq C_k d(t)^k \|x\|_{(\AS)^k} ~~ \forall t\geq0,  x\in \dom{(\AS)^k}, k \in \N,
\end{align*} where  $d:[0,\infty)\to[0,\infty)$ is some decreasing decay rate with $\lim_{t\to \infty} d(t) = 0 $ and $(C_k)_{k\in \N}$ are positive constants.

 Non-uniform decay rates are estimates on only classical solutions to the wave-heat system, as $\dom{\AS}$ is precisely the set of initial data $x$ for which $t\mapsto \TS(t)x$ is a classical solution.

\subsection{Resolvent Estimates} \label{sec:resolventestimates}

This subsection is concerned with finding equivalent conditions to the existence of non-uniform decay rates on $\TS.$

A general framework for finding (optimal) non-uniform decay rates by estimating the growth of the resolvent of the generator along the imaginary axis was already developed by Batty, Duyckaerts \cite{batty2008non} and Borichev, Tomilov \cite{borichev2010optimal}, building upon the work of Burq \cite{Burq} and Lebeau \cite{Lebeau}.
For convenience, we repeat their results here.
\begin{theorem}[Resolvent bounds and semigroup decay rates]
\label{thm:abstractresolventbounds}
Let $(T(t))_{t\geq 0}$ be a bounded semigroup with generator $A$ on a Hilbert space $H$  such that $\sigma(A)\cap i\R = \varnothing.$
\begin{enumerate}
    \item \tn{(Batty, Duyckaerts, Burq, Lebeau)}
    The semigroup $T$ possesses logarithmic decay with rate $1/d,d>0$, i.e., 
    \begin{align}\label{eq:logdecaydef}
        \forall k \in \N ~ \exists C_k>0: ~ \|T(t)x\|_{H} \leq \frac{C_k}{\log(t+2)^{\frac{k}{d}}} \|x\|_{A^k} ~~ \forall t\geq0,  x\in \dom{A^k},
    \end{align}
    if and only if the generator $A$ has exponential resolvent bounds with power $d>0$, i.e., there exists $C>0$ such that
    \begin{align}
        \label{eq:expresolventbound}
        \|(\lambda i +A)\inv \|_{H \to H} \leq C \exp{(C|\lambda|^d)} ~~\forall \lambda\in \R, |\lambda|\bb1.
    \end{align}
    \item   \tn{(Borichev, Tomilov)}
    The semigroup $T$ possesses polynomial decay with rate $1/d,$  $d>0$, i.e., 
    \begin{align}\label{eq:polynomdecaydef}
        \forall k \in \N ~ \exists C_k>0: ~ \|T(t)x\|_{H} \leq \frac{C_k}{(1+t)^{\frac{k}{d}}} \|x\|_{A^k} ~~ \forall t\geq0,  x\in \dom{A^k},
    \end{align}
    if and only if the generator $A$ has polynomial resolvent bounds with power $d>0$, i.e., there exists $C>0$ such that
    \begin{align}
        \label{eq:polynomresolventbound}
        \|(\lambda i +A)\inv \|_{H \to H} \leq C |\lambda|^d ~~\forall \lambda\in \R, |\lambda|\bb 1.
    \end{align}
\end{enumerate}
    
\end{theorem}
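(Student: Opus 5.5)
Since Theorem~\ref{thm:abstractresolventbounds} is quoted from the literature, my plan is to reduce both parts to the case $k=1$ and then reproduce the two standard arguments: a contour-integral Tauberian argument for the logarithmic case and a Plancherel argument for the polynomial case.

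\emph{Reduction to $k=1$.} Because $\sigma(A)\cap i\R=\varnothing$, we have $0\in\rho(A)$. Hence $\|x\|_{A^k}$ is equivalent to $\|A^kx\|_H$, and the estimates \eqref{eq:logdecaydef} and \eqref{eq:polynomdecaydef} are equivalent to operator-norm bounds on $T(t)A^{-k}$. The semigroup property gives
\begin{align*}
T(t)A^{-k}=\bigl(T(t/k)A^{-1}\bigr)^k .
\end{align*}
So the bound for $k=1$, with rate $m(t)$, implies the bound for general $k$ with rate $m(t/k)^k$. This is again of the form $C_k\log(t+2)^{-k/d}$, respectively $C_k(1+t)^{-k/d}$. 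It therefore suffices to prove, for $k=1$, the equivalence between $\|T(t)A^{-1}\|\to0$ at the stated rate and the stated resolvent bound.

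\emph{Decay $\Rightarrow$ resolvent bound.} Assume $\|T(t)A^{-1}\|\le m(t)$ with $m\downarrow0$. For $\re z>0$ I write $R(z,A)A^{-1}=\int_0^\infty e^{-zt}T(t)A^{-1}\dx t$. I split this integral at a time $\tau$ chosen so that $m(\tau)\le 1/(2(1+|z|))$. The tail beyond $\tau$ can then be absorbed by a Neumann-series argument via the identity $R(z,A)=zR(z,A)A^{-1}+A^{-1}$. The initial piece on $[0,\tau]$ contributes $O(\tau)$, and the estimate extends to $z=i\lambda$ by continuity. Inverting $m$ then gives the resolvent bound: $m(t)=\log(t+2)^{-1/d}$ yields $\tau\approx\exp(C|\lambda|^d)$, while $m(t)=(1+t)^{-1/d}$ yields $\tau\approx|\lambda|^d$.

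\emph{Resolvent bound $\Rightarrow$ decay.} This is the main step.

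In the logarithmic case I follow Batty--Duyckaerts, which refines Burq and Lebeau. A resolvent bound $M(\lambda)$ on $i\R$ extends by Neumann series to a region $\{\re z>-c/M(|\im z|)\}$, with a comparable bound there. I then represent $T(t)A^{-1}$, smoothed by a factor of the form $(1+z/R)^2$ in the spirit of Newman's Tauberian contour argument, as a Cauchy integral over a contour that bends slightly into the left half-plane for $|\im z|\le R$ and closes on a large circle. Optimizing over $R$ gives the rate $M_{\log}^{-1}(t)$, which equals $\log(t)^{-1/d}$ when $M(\lambda)=\exp(C|\lambda|^d)$.

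For the sharp polynomial rate this contour argument loses a logarithmic factor. There I instead use the Hilbert space structure, following Borichev--Tomilov. Applying Plancherel to $t\mapsto T(t)x$ and its Laplace transform $R(\cdot,A)x$, together with the identity $R(z,A)^{\alpha}$-type estimates, one shows that $\int_0^\infty\|T(t)(-A)^{-d}x\|^2\dx t\lesssim\|x\|^2$, and similarly for the adjoint semigroup. A duality and semigroup trick, writing $tT(t)=\int_0^tT(s)T(t-s)\dx s$, then yields $\|T(t)A^{-d}\|=O(1/t)$, hence $\|T(t)A^{-1}\|=O(t^{-1/d})$ by the moment inequality. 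The step I expect to be hardest is precisely this removal of the logarithmic loss. It depends essentially on Plancherel, which is why the Hilbert space assumption is needed in part (ii) but not in part (i).
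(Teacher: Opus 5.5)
The paper gives no proof of this theorem. It quotes Batty--Duyckaerts and Borichev--Tomilov and only remarks that $k=1$ suffices, which is exactly your reduction via $T(t)A^{-k}=(T(t/k)A^{-1})^k$. Your outline of part (i) is a reasonable sketch of Batty--Duyckaerts, modulo details: Newman's kernel is $1+z^2/R^2$, and the rate is $1/M_{\log}^{-1}(ct)$, not $M_{\log}^{-1}(t)$.

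\textbf{The gap: your Plancherel step in (ii) only gives rate $1/(2d)$.} The problem is in the step you yourself flag as hardest, resolvent bound $\Rightarrow$ decay in part (ii). Suppose $\int_0^\infty\|T(t)(-A)^{-d}x\|^2\dx t\le C\|x\|^2$ and the adjoint analogue hold. The identity
\[
t\langle T(t)(-A)^{-2d}x,y\rangle=\int_0^t\langle T(s)(-A)^{-d}x,\,T(t-s)^*(-A^*)^{-d}y\rangle\dx s
\]
then gives $\|T(t)(-A)^{-2d}\|=O(1/t)$, hence only $\|T(t)A^{-1}\|=O(t^{-1/(2d)})$. This is no better than what the $L^2$ bound yields on its own. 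With $B=(-A)^{-d}$ and $M:=\sup_{t\ge0}\|T(t)\|$, the estimate $t\|T(t)Bx\|^2\le M^2\int_0^t\|T(s)Bx\|^2\dx s$ already gives $\|T(t)(-A)^{-d}\|=O(t^{-1/2})$. To reach $(-A)^{-d}$ this way you would need $L^2$ bounds for $T(\cdot)(-A)^{-d/2}$ and its adjoint. The natural input for those, $\sup_{\re\lambda>0}\|R(\lambda,A)(-A)^{-d/2}\|$, is infinite in general (e.g.\ for normal $A$ with eigenvalues $-n^{-d}+in$).

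\textbf{The missing idea.} Place the uniform bound $K:=\sup_{\re\lambda>0}\|R(\lambda,A)(-A)^{-d}\|<\infty$ between two \emph{unweighted} Plancherel factors. Two facts make this work:
\[
\int_0^\infty e^{-\lambda t}\tfrac{t^2}{2}T(t)\dx t=R(\lambda,A)^3,
\qquad
|\langle R(\lambda,A)^3(-A)^{-d}x,y\rangle|\le K\|R(\lambda,A)x\|\,\|R(\lambda,A)^*y\|.
\]
Invert the Laplace transform on the line $\re\lambda=\epsilon$, and use $\int_\R\|R(\epsilon+is,A)x\|^2\dx s\le\pi M^2\epsilon^{-1}\|x\|^2$ (likewise for $A^*$). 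This gives $t^2|\langle T(t)(-A)^{-d}x,y\rangle|\lesssim e^{\epsilon t}\epsilon^{-1}\|x\|\|y\|$. Choosing $\epsilon=1/t$ yields $\|T(t)(-A)^{-d}\|=O(1/t)$, and the moment inequality then gives $\|T(t)A^{-1}\|=O(t^{-1/d})$.

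The bound $K<\infty$ comes from two ingredients. First, extend $\|R(is,A)\|\lesssim|s|^d$ to the closed right half-plane with the Neumann series. Second, for integer $d$, iterate $R(\lambda,A)A^{-1}=\lambda^{-1}(R(\lambda,A)+A^{-1})$. Non-integer $d$ needs an extra interpolation (moment inequality) step, which your sketch does not mention.

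\textbf{A smaller slip in the converse (decay $\Rightarrow$ resolvent).} The correct identity is $R(z,A)=zR(z,A)A^{-1}-A^{-1}$. If you split $R(z,A)A^{-1}$ and then multiply by $z$, you get $\|R(z,A)\|\lesssim|z|\tau$. In the polynomial case this is $|\lambda|^{d+1}$, not $|\lambda|^d$; the loss is harmless only in the logarithmic case. Split $R(z,A)$ itself instead:
\[
R(z,A)=\int_0^\tau e^{-zt}T(t)\dx t+e^{-z\tau}T(\tau)A^{-1}(zR(z,A)-I).
\]
This gives $\|R(z,A)\|\le 2(M\tau+m(\tau))$ once $|z|m(\tau)\le\tfrac12$.
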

We note that if one aims to show the resolvent bounds~\eqref{eq:expresolventbound},~\eqref{eq:polynomresolventbound}, it suffices to show the respective semigroup decay rates~\eqref{eq:logdecaydef},~\eqref{eq:polynomdecaydef} only for $k=1$. 
This follows, e.g., from observation~(1.5) in \cite{batty2008non}.

Our goal in the following sections is to find polynomial and logarithmic decay rates on $\TS$. As $\TS,\AS$ satisfy the assumptions of Theorem~\ref{thm:abstractresolventbounds}, see Lemma~\ref{lem:contractive} and Proposition~\ref{prop:kernelandspectrum}, we aim to find lower residual bounds of the form 
\begin{align}
        \label{eq:resolventestimatepolynomial2}
        \left\|\left(\lambda i +\AS  \right)f\right\|_{\bbH_1} \gtrsim p(\lambda) \|f\|_{\bbH_1}, ~~ |\lambda|\gg1,\lambda\in \R, f\in \dom{\AS},
    \end{align} where $p:\R \to [0,\infty) $ is some weight function that decays at most exponentially fast to zero for $|\lambda| \to \infty$.

Here and in the following, we use the notation $X\approx Y,X\lesssim Y, X\gtrsim Y$ for estimates to mean that there exists a
constant $C > 0$, independent of the variables in the estimate, such that $X/C \leq Y \leq CX,$ $X \leq CY, CX\geq Y$, respectively.

\subsection{Reduction to the wave domain}\label{sec:reductiontowavedomain}
On our way to finding residual bounds on $\AS$, we first introduce  the following three auxiliary results, the first of which follows the proof of Lemma 3.1 in \cite{gluck2024stability} and the closure relation framework.
\begin{lemma}
  for each accretive $S\in \domS$, we find the estimate $$\|\Ato f\|^2_{\bbH_2} \lesssim \|f\|_{\bbH_1}\|(i \lambda +A_S)f\|_{\bbH_1}$$ for all $f \in \dom{\AS} $, $\lambda \in \R.$\label{lem:A21h}
\end{lemma}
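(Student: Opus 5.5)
The plan is to compute the real part of $\langle (i\lambda + A_S) f, f\rangle_{\bbH_1}$ for $f=(w_1,w_2,h_1)\in\dom{\AS}\subset\dom{A_S}$ and show that it equals, up to sign, the dissipation $\re\ioh S\gr h_1\cdot\overline{\gr h_1}\dx x$. Accretivity of $S$ bounds this from below by $\kappa\|\gr h_1\|^2_{L^2(\Omega_h)} = \kappa\|\Ato f\|^2_{\bbH_2}$. Cauchy--Schwarz then gives
\[
\kappa\|\Ato f\|_{\bbH_2}^2 \le \bigl|\re\langle (i\lambda+A_S)f,f\rangle_{\bbH_1}\bigr| \le \|f\|_{\bbH_1}\|(i\lambda+A_S)f\|_{\bbH_1},
\]
which is the claim with implicit constant $1/\kappa$. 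Since $\re\langle i\lambda f,f\rangle=0$, it suffices to prove $\re\langle A_S f,f\rangle_{\bbH_1} = -\re\ioh S\gr h_1\cdot\overline{\gr h_1}\dx x$.

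To establish this identity I will use the closure-relation structure from Subsection~\ref{sec:contractionsemigroup}, as in Lemma 3.1 of \cite{gluck2024stability}. For $f\in\dom{A_S}$, set $g\coloneq (f, S\Ato f)\in\dom{\Ae}$. By definition $A_S f = A_1 g$, and the last component of $\Ae g$ is $\Ato f=\gr h_1$. Because $\Ae$ is skew-adjoint by Lemma~\ref{lem:contractive}, we have $\re\langle \Ae g,g\rangle_{\bbH_1\times\bbH_2}=0$. Expanding this gives
\[
0=\re\langle A_1 g, f\rangle_{\bbH_1} + \re\langle \gr h_1, S\gr h_1\rangle_{\bbH_2}.
\]
Since $\re\langle \gr h_1,S\gr h_1\rangle=\re\ioh \overline{\gr h_1}^\top S\gr h_1\dx x\ge\kappa\|\gr h_1\|^2$, this yields $\re\langle A_Sf,f\rangle_{\bbH_1}\le -\kappa\|\Ato f\|_{\bbH_2}^2$. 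The sign of the inner-product convention needs a short check, but taking real parts makes the order irrelevant.

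Alternatively, the identity can be verified directly. Take $x_1\in H^1_0(\Omega)$ as the patching of $w_1,h_1$ and $x_2\in H^\di(\Omega)$ as the patching of $w_2, S\gr h_1$, both of which exist by Proposition~\ref{prop:patching} and \eqref{eq:domAS}. Then apply \eqref{eq:normaltraceidentitybydensity} on $\Omega$ with $\tr(x_1)=0$ to get $\io \di x_2\,\overline{x_1} + x_2\cdot\overline{\gr x_1}\dx x = 0$, whose real part is exactly the identity above.

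I expect no real obstacle. The only delicate point is justifying the integration by parts across the interface for nonsmooth $S$ and Lipschitz domains, and this is exactly what the patching proposition and the skew-adjointness of $D_D$ already provide.
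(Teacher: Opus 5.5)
Your proposal is correct and follows essentially the same argument as the paper. The paper also forms $z=(f,S\Ato f)\in\dom{\Ae}$, uses $A_Sf=A_1z$ together with the skew-adjointness of $\Ae$ (in the form $\re\sca{z}{-\Ae z}=0$) to identify $\re\sca{S\Ato f}{\Ato f}_{\bbH_2}$ with $\re\sca{f}{-(i\lambda+A_S)f}_{\bbH_1}$, and then concludes by accretivity and Cauchy--Schwarz; your alternative patching/Green-identity verification is just the unpacked reason why $\Ae$ is skew-adjoint, so it is fine as well.
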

\begin{proof} 
    Define the vector 
    $$z = \begin{pmatrix}
        f\\ S\Ato f
    \end{pmatrix},$$
    which is an element of $\dom{A_1}$ and satisfies $A_S f = A_1 z$ by the construction \eqref{eq:defofAS} of $A_S$.
    
    Using the accretivity bound $\re (\overline{v}^\top S(x)v)\gtrsim |v|^2$ for all $v\in \C^n$ and almost all $x\in \Omega_h$, we continue with the estimate
    \begin{align*}
        \|\Ato f\|^2_{\bbH_2} &\lesssim \re\sca{S \Ato f}{ \Ato v}_{\bbH_2}\\ &= \re\sca{S \Ato f}{\Ato f}_{\bbH_2} + \re\sca{f}{-i \lambda f}_{\bbH_1} \\ &= \re\sca{S \Ato f}{\Ato f}_{\bbH_2} + \re\sca{f}{-i \lambda f}_{\bbH_1}+ \re\sca{z}{-\Ae z}_{\bbH_1\times \bbH_2}.
    \end{align*} The last equality uses the skew-adjointness of $\Ae$ demonstrated in Lemma~\ref{lem:contractive}. We also note that $S(x)$ is invertible almost everywhere by the assumed accretivity of $S.$
    We close the argument via the computation
    \begin{align*}
        &\hspace{3ex}\re\sca{S \Ato f}{S\inv S \Ato f}_{\bbH_2} + \re\sca{f}{-i \lambda f}_{\bbH_1}+ \re\sca{z}{-\Ae z}_{\bbH_1\times \bbH_2}
        \\&=\re\sca{z}{\left(\begin{pmatrix}
            -i\lambda & 0\\
            0 & S\inv
        \end{pmatrix}-\Ae\right)z}_{\bbH_1\times \bbH_2}
        \\&=\re\sca{z}{\begin{pmatrix}
            -i\lambda f\\
         \Ato f
        \end{pmatrix}-\begin{pmatrix}
            A_1 z\\\Ato f
        \end{pmatrix}}_{\bbH_1\times \bbH_2}
        \\&=\re\sca{f}{-i\lambda f - A_1 z}_{\bbH_1}
        \\&=\re\sca{f}{-(i\lambda  + A_S) f}_{\bbH_1} \leq \|f\|_{\bbH_1}\|(i \lambda +A_S)f\|_{\bbH_1}.\qedhere
    \end{align*}
    
\end{proof}

The second auxiliary result roughly corresponds to the existence of a bounded extension operator from $H^{\di S \gr}_{\Gamma_h}(\Omega_h)$ to $\dom{\AS}$.
\begin{lemma} \label{lem:smallextensionoperator}
     for each $(w_1,w_2,h_1)  \in \dom{\AS}= \dom{A_S} \cap \ker(A_S)^\perp$, or equivalently for each $h_1\in H^{\di S\gr}_{\Gamma_h}(\Omega_h)$, there exist two bounded extensions $$u_1 \in H^1_{\Gamma_w}(\Omega_w),~u_2 \in \gr H^\Delta_{\Gamma_w}(\Omega_w),$$ independent of $(w_1,w_2)$, satisfying 
        \begin{enumerate}
            \item $(u_1,u_2,h_1)  \in \dom{\AS}$,
            \item $\|u_1\|_{H^1(\Omega_w)} \lesssim\|h_1\|_{H^1(\Omega_h)},$
            \item $\|u_2\|_{L^2(\Omega_w)}\lesssim\|\gr h_1\|_{L^2(\Omega_h)}$, 
            \item $\|u_2\|_{H^\di(\Omega_w)} \lesssim\|S\gr h_1\|_{H^\di(\Omega_h)}.$
        \end{enumerate}
    
\end{lemma}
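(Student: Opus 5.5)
The plan is to build both extensions from two bounded linear ``transfer'' operators across the interface $\gamma$. The first is $E_h\colon H^1_{\Gamma_h}(\Omega_h)\to H^1_{\Gamma_w}(\Omega_w)$ with $\trg(E_h h)=\trg(h)$. The second is the mirror operator $E_w\colon H^1_{\Gamma_w}(\Omega_w)\to H^1_{\Gamma_h}(\Omega_h)$ with $\trg(E_w\varphi)=\trg(\varphi)$. For $E_w$ we additionally require boundedness $L^2(\Omega_w)\to L^2(\Omega_h)$, not only $H^1\to H^1$. We set $u_1\coloneq E_h h_1$ and $u_2\coloneq\gr p$ for a potential $p\in H^\Delta_{\Gamma_w}(\Omega_w)$ defined variationally via $E_w$. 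Both $u_1$ and $u_2$ then depend only on $h_1$, and the bound (iii) will come out of Lax--Milgram with a right-hand side involving only $S\gr h_1$.

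\emph{Step 1: $u_1$.} Since $\trGh(h_1)=0$, the trace $\tr(h_1)\in\heh(\dO_h)$ vanishes on $\Gamma_h$. Hence $\trg(h_1)\in\hehz(\gamma)$ with norm $\lesssim\|h_1\|_{H^1(\Omega_h)}$, because the $\hehz$-norm is equivalent to the $\heh(\dO_h)$-norm of the zero extension. Its zero extension to $\dO_w$ therefore lies in $\heh(\dO_w)$, and a bounded right inverse of $\tr\colon H^1(\Omega_w)\to\heh(\dO_w)$ gives $u_1\in H^1_{\Gamma_w}(\Omega_w)$ with $\trg(u_1)=\trg(h_1)$ and (ii).

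\emph{Step 2: the operator $E_w$.} I would like a linear map $E_w$ that is simultaneously $L^2\to L^2$ and $H^1\to H^1$ bounded, preserves the $\gamma$-trace, and vanishes on $\Gamma_h$. The construction I would try: extend $\varphi\in H^1_{\Gamma_w}(\Omega_w)$ by zero across $\Gamma_w$ to the exterior of $\Omega$, and apply a Stein-type (reflection) extension operator for the Lipschitz domain $\Omega_w$. Stein's operator is bounded on $L^2$ and on $H^1$ simultaneously. Then restrict to $\Omega_h$ and enforce the zero trace on $\Gamma_h$ by a locally reflected construction near $\overline{\gamma}\cap\overline{\Gamma_h}$, using the Lipschitz charts of the triple. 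I expect \textbf{this step to be the main obstacle}, namely keeping both $L^2$- and $H^1$-boundedness near the junction $\partial\gamma$. A plain cut-off would destroy the trace identity there. My first attempt would be to work in the bi-Lipschitz charts of Definition~\ref{def:lipschitz} and use the zero extension across $\Gamma_w$ together with a reflection across $\Gamma_h$.

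\emph{Step 3: $u_2$.} By Riesz representation there is a unique $F\in L^2(\Omega_w)$ with $\int_{\Omega_w}F\overline{\varphi}\dx x=-\int_{\Omega_h}\di(S\gr h_1)\overline{E_w\varphi}\dx x$ for all $\varphi$. Indeed, the right-hand side extends to a bounded functional on $L^2(\Omega_w)$ because $E_w$ is $L^2$-bounded, so $F=-E_w^*\di(S\gr h_1)$ and $\|F\|_{L^2}\lesssim\|\di S\gr h_1\|_{L^2}$. Next let $p\in H^1_{\Gamma_w}(\Omega_w)$ solve
\begin{align*}
\int_{\Omega_w}\gr p\cdot\overline{\gr\varphi}\dx x=-\int_{\Omega_h}S\gr h_1\cdot\overline{\gr E_w\varphi}\dx x\qquad\forall\varphi\in H^1_{\Gamma_w}(\Omega_w),
\end{align*}
modulo constants in case $\Gamma_w=\varnothing$. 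By Lax--Milgram and Poincaré this gives $\|\gr p\|_{L^2}\lesssim\|S\|_\infty\|\gr h_1\|_{L^2}$. Testing with $\varphi\in C_c^\infty(\Omega_w)$ gives $\Delta p=F$ in $\Omega_w$, because $\tr(E_w\varphi)$ then vanishes on $\gamma$ and the normal-trace identity in $\Omega_h$ applies. Thus $p\in H^\Delta_{\Gamma_w}(\Omega_w)$, and $\|u_2\|_{H^\di}\lesssim\|\gr h_1\|+\|F\|\lesssim\|S\gr h_1\|_{H^\di(\Omega_h)}$, which gives (iii) and (iv).

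\emph{Step 4: the interface condition and (i).} For general $\varphi$, the identities \eqref{eq:normaltraceidentity} on $\Omega_w$ and $\Omega_h$ give
\begin{align*}
\duality{\trg_N(\gr p)}{\trg\varphi}=\int_{\Omega_w}\gr p\cdot\overline{\gr\varphi}+F\overline{\varphi}\dx x=-\int_{\Omega_h}S\gr h_1\cdot\overline{\gr E_w\varphi}+\di(S\gr h_1)\overline{E_w\varphi}\dx x=-\duality{\trg_N(S\gr h_1)}{\trg\varphi}.
\end{align*}
Since $\trg$ is onto $\hehz(\gamma)$ on functions vanishing on $\Gamma_w$, this yields $\trg_N(u_2)=-\trg_N(S\gr h_1)$. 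Together with $\trg(u_1)=\trg(h_1)$ and \eqref{eq:domAS}, we get $(u_1,u_2,h_1)\in\dom{A_S}$. Finally, $u_2\in\gr H^1_{\Gamma_w}(\Omega_w)$, so by Theorem~\ref{thm:mildsolutions} the triple lies in $\ker(A_S)^\perp$, i.e.\ in $\dom{\AS}$.
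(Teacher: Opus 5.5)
Your route differs genuinely from the paper's and can be made to work. However, Step~2 is left unresolved, and Step~3 breaks down when $\Gamma_w=\varnothing$. Step~1 coincides with the paper's construction of $u_1$.

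\textbf{Comparison with the paper.} For $u_2$ the paper solves no elliptic problem. It extends the flux $S\gr h_1$ across $\gamma$ with the $L^2$-bounded $H^\di$-extension operator of Proposition~\ref{prop:hdivextensionop}, restricts to $\Omega_w$, and applies $1-\bbP_\gamma$, the mixed Helmholtz projection with kernel $\gr H^1_{\Gamma_w}(\Omega_w)$. Divergence and $\trg_N$ are untouched by the projection, so the interface condition follows from Proposition~\ref{prop:patching}(ii). Your construction is the dual one: a scalar extension $E_w$ in the opposite direction, Lax--Milgram for $p$, and $\Delta p$ identified by duality via $E_w^*$. The roles of the two bounds swap. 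In the paper, $L^2$-boundedness of the extension gives (iii). For you, $H^1$-boundedness of $E_w$ gives (iii) and $L^2$-boundedness gives (iv). Your route needs only classical scalar tools rather than the Piola-transform $H^\di$-extension the paper merely sketches. The price is the bookkeeping in Steps~3--4.

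\textbf{Step~2.} The plan you describe is the hard way: Stein's operator for $\Omega_w$, then local repairs near $\overline{\gamma}\cap\overline{\Gamma_h}$. In that plan the zero extension across $\Gamma_w$ plays no role, since Stein's operator for $\Omega_w$ never sees it. Instead:
\begin{itemize}
\item Let $\tilde\varphi$ be the zero extension of $\varphi$, and apply Stein's extension operator $\mathcal{E}$ of the open set $\R^n\setminus\overline{\Omega_h}$. Its boundary $\dO_h$ is compact and locally a Lipschitz graph, which is all Stein's theorem needs; connectedness is irrelevant.
\item Set $E_w\varphi\coloneq(\mathcal{E}\tilde\varphi)\restrict{\Omega_h}$.
\item Since $\trGw(\varphi)=0$, we have $\tilde\varphi\in H^1(\R^n\setminus\overline{\Omega_h})$ with the same $L^2$- and $H^1$-norms as $\varphi$. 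Moreover, $\mathcal{E}$ is bounded on $L^2$ and $H^1$ simultaneously.
\item Because $\mathcal{E}\tilde\varphi\in H^1(\R^n)$, its traces on $\dO_h$ from both sides coincide (cf.\ Proposition~\ref{prop:patching}(i)). This gives $\trg(\varphi)$ on $\gamma$ and $0$ on $\Gamma_h$.
\end{itemize}
There is no junction issue at all.

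\textbf{Step~3 when $\Gamma_w=\varnothing$.} Solving ``modulo constants'' requires the right-hand side to vanish at $\varphi=1$, i.e.\ $\int_{\Omega_h}S\gr h_1\cdot\overline{\gr E_w1}\dx x=0$. This fails in general, since $E_w1$ is non-constant (trace $1$ on $\gamma$, $0$ on $\Gamma_h$). Repair it as follows:
\begin{itemize}
\item Let $c\coloneq-|\Omega_w|^{-1}\int_{\Omega_h}S\gr h_1\cdot\overline{\gr E_w1}\dx x$, replace $F$ by $F+c$, and add $-c\int_{\Omega_w}\overline{\varphi}\dx x$ to the right-hand side of the variational problem.
\item The new right-hand side annihilates constants.
\item The $c$-term vanishes on mean-zero test functions, so the bound on $\|\gr p\|_{L^2}$ is unchanged.
\item Since $|c|\lesssim\|\gr h_1\|_{L^2(\Omega_h)}$, estimate (iv) is preserved.
\item Redoing your computations gives $\Delta p=F+c$ and the same interface identity as in Step~4.
\end{itemize}
The paper's route needs no such correction: its right-hand side $\int_{\Omega_w}v\cdot\overline{\gr\varphi}\dx x$ vanishes on constants automatically.
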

\begin{proof}  We recall $
    \ker(A_S)^\perp = L^2(\Omega_w)\times \gr H^1_{\Gamma_w}(\Omega_w) \times L^2(\Omega_h)$ and
    \begin{align*} \dom{A_S} &= \left\{
            \begin{pmatrix}
                w_1\\w_2\\h_1
            \end{pmatrix}
            \in H^1_{\Gamma_w}(\Omega_w)\times H^\di(\Omega_w)\times H^{\di S \gr}_{\Gamma_h}(\Omega_h)~ \right|
            \nonumber\\&\hspace{6ex}  \trg(w_1)=\trg(h_1), \trg_N(w_2)=-\trg_N(S \gr h_1)\left. \rule{0cm}{0.75cm}\right\},
\end{align*}
which combines to 
\begin{align} \dom{\AS} &= \left\{
            \begin{pmatrix}
                w_1\\w_2\\h_1
            \end{pmatrix}
            \in H^1_{\Gamma_w}(\Omega_w)\times \gr H^\Delta_{\Gamma_w}(\Omega_w)\times H^{\di S \gr}_{\Gamma_h}(\Omega_h)~ \right|
            \nonumber\\&\hspace{6ex}  \trg(w_1)=\trg(h_1), \trg_N(w_2)=-\trg_N(S \gr h_1)\left. \rule{0cm}{0.75cm}\right\}. \label{eq:characterizationofASperp}
\end{align}

    First, Proposition~\ref{prop:patching}.(i) and the zero boundary trace condition $\tr^{\Gamma_h}(h_1)=0$ guarantee that the extension of $h_1$ by zero to $\R^n\backslash \Omega_w$, which we denote by $h_1^0$, is an element of $H^1(\R^n\backslash \Omega_w)$. We then choose $$u_1 = E_{\Omega_w} (\tr_{\R^n\backslash \Omega_w} (h_1^0)),$$ where the extension operator $E_{\Omega_w}$ is the right inverse of the surjective trace operator $$\tr:H^1(\Omega_w)\to H^{1/2}(\dO_w)  $$ and whose existence follows from the open mapping theorem. By construction, $u_1$ satisfies 
    \begin{align*}
         \|u_1\|_{H^1(\Omega_1)} &\lesssim \left\|\tr_{\R^n\backslash \Omega_w} (h_1^0)\right\| _{H^{1/2}(\dO_w)} \lesssim  \|h_1\|_{H^1(\Omega_h)}, \\\trg(u_1) &= \trg (h_1^0) = \trg (h_1), \tn{ and }\\
         \trGw(u_1) &= \trGw (h_1^0) = 0,
    \end{align*}
     i.e., $u_1 \in H^1_{\Gamma_w}(\Omega_w)$ indeed satisfies the properties we require.

    For the construction of $u_2$, let $E^\di_{\Omega_h}:H^\di(\Omega_h)\to H^\di(\R^n)$ denote the $L^2$-bounded extension operator from Proposition~\ref{prop:hdivextensionop} and let $\bbP_\gamma$ again denote the Helmholtz projection in $L^2(\Omega_w)$ with range $$\{v \in H^\di(\Omega_w)~|~ \di v = 0, \trg_N(v)=0\}$$ and kernel $\gr H^1_{\Gamma_w}(\Omega_w)$, which exists by Theorem 4.2 in \cite{hodgeonforms}.
    
    We then consider the choice $$u_2  \coloneq  (1-\bbP_\gamma)\left((E^\di_{\Omega_h} S \gr h_1) \restrict{\Omega_w}\right).$$
    As $\di \circ \bbP_\gamma = 0$, we find that the function $u_2$
    is an element of 
    $$\gr H^1_{\Gamma_w}(\Omega_w) \cap H^\di(\Omega_w) = \gr H^\Delta_{\Gamma_w}(\Omega_w)$$ with  
    $$\|u_2\|_{L^2(\Omega_w)}\leq \left\|E^\di_{\Omega_h} S \gr h_1\right\|_{L^2(\Omega_w)}\lesssim\| S\gr h_1\|_{L^2(\Omega_h)}\lesssim \|\gr h_1\|_{L^2(\Omega_h)}$$  and 
    $$\| u_2\|_{H^\di(\Omega_w)} 
    \leq \left\| E^\di_{\Omega_h} S \gr h_1\right\|_{H^\di(\Omega_w)} \lesssim \| S \gr h_1\|_{H^\di(\Omega_w)}.$$
 Since $\trg_N\circ \bbP_\gamma$ is also identically zero, we even have
    $$\trg_N(u_2) = \trg_N\left((E^\di_{\Omega_h} S \gr h_1)\restrict{\Omega_w}\right) = -\trg_N(S \gr h_1).$$

    In total, this confirms $(u_1,u_2,h_1) \in \dom{\AS}$ as well as the claimed estimates
    \begin{align*}
        \|u_1\|_{H^1(\Omega_w)} &\lesssim\|\gr h_1\|_{L^2(\Omega_h)}, ~~
        \|u_2\|_{L^2(\Omega_w)}\lesssim \|\gr h_1\|_{L^2(\Omega_h)},\\
        \|u_2\|_{H^\di(\Omega_w)} &\lesssim \|S\gr h_1\|_{H^\di(\Omega_h)}.\qedhere
    \end{align*}
\end{proof}

The third auxiliary result is a version of Poincaré's inequality, where one replaces the boundary trace control with control on the second order residual  term $(\lambda+ \di S \gr) v$.

\begin{lemma}[Poincarè-type inequality]
\label{lem:poincare}
    Let $\Tilde{S}:L^2(\Omega)\to L^2(\Omega)$ be a bounded linear operator. We then have the estimate $$\|v\|_{L^2(\Omega)} \lesssim \|\gr v\|_{L^2(\Omega)}+\frac{1}{|\lambda|}\|(\lambda+ \di \Tilde{S} \gr) v\|_{L^2(\Omega)}$$ for all $v \in H^{ \di \Tilde{S} \gr}(\Omega)$ and all $\lambda \in \C$ with $|\lambda|\geq 1$.
\end{lemma}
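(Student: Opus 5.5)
The plan is to split $v$ into its mean and its oscillating part, control the oscillating part by the Poincaré--Wirtinger inequality, and control the mean by testing the residual against a fixed function. Since $\Omega$ is a bounded Lipschitz domain, in particular connected, Poincaré--Wirtinger applies. Write $\bar v \coloneq |\Omega|^{-1}\io v \dx x$ and $g \coloneq (\lambda + \di \Tilde{S}\gr)v \in L^2(\Omega)$. Then
\begin{align*}
\|v-\bar v\|_{L^2(\Omega)} \lesssim \|\gr v\|_{L^2(\Omega)}, \qquad \|v\|_{L^2(\Omega)} \le \|v-\bar v\|_{L^2(\Omega)} + |\Omega|^{1/2}|\bar v|,
\end{align*}
so it remains to bound the constant $|\bar v|$ by the right-hand side of the claim.

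To estimate $\bar v$, I will fix once and for all a real-valued $\phi \in C_c^\infty(\Omega)\subset H^1_0(\Omega)$ with $\io \phi \dx x = 1$. It depends only on $\Omega$, so $\|\phi\|_{L^2}$ and $\|\gr\phi\|_{L^2}$ count as constants. The definition \eqref{eq:defofdivSgr} of $H^{\di \Tilde{S}\gr}(\Omega)$ (with $S$ replaced by $\Tilde S$) allows testing against $\phi$:
\begin{align*}
\io g\,\phi \dx x = \lambda \io v\,\phi \dx x - \io \Tilde{S}\gr v\cdot \gr\phi \dx x .
\end{align*}
Solving for $\io v\phi \dx x$ and using the boundedness of $\Tilde S$ together with Cauchy--Schwarz gives
\begin{align*}
\left|\io v\,\phi \dx x\right| \le \frac{1}{|\lambda|}\left(\|g\|_{L^2}\|\phi\|_{L^2} + \|\Tilde S\|\,\|\gr v\|_{L^2}\|\gr \phi\|_{L^2}\right) \lesssim \frac{1}{|\lambda|}\|g\|_{L^2} + \|\gr v\|_{L^2},
\end{align*}
where the factor $1/|\lambda|$ in front of the gradient term is simply dropped because $|\lambda|\ge 1$.

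Finally, since $\io\phi \dx x = 1$, we have $\bar v = \io v\phi \dx x - \io (v-\bar v)\phi \dx x$. The second term is bounded by $\|\phi\|_{L^2}\|v-\bar v\|_{L^2}\lesssim \|\gr v\|_{L^2}$. Combining this with the two displays above yields the claimed estimate, with a constant depending only on $\Omega$ and $\|\Tilde S\|$, and uniform in $\lambda\in\C$ with $|\lambda|\ge1$.

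There is no genuine obstacle here. The only point to check is that the weak definition of $\di\Tilde S\gr$ really permits testing with $\phi\in H^1_0(\Omega)$ and produces the pairing $\io \Tilde S\gr v\cdot\gr\phi$ without boundary terms, which is exactly how \eqref{eq:defofdivSgr} is set up. Because $\phi$ is real, conjugation conventions in the pairing play no role. Note also that no boundary condition on $v$ is needed: the mean of $v$ is controlled by the residual term instead of by a trace.
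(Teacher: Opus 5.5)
Your proof is correct, and it follows a different, direct and quantitative route compared with the paper.

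\textbf{The paper's argument.} It argues by contradiction. It takes a normalized sequence $v_k$ with $\|\gr v_k\|_{L^2}+|\lambda_k|^{-1}\|(\lambda_k+\di\Tilde S\gr)v_k\|_{L^2}\le 1/k$. It uses Poincar\'e--Wirtinger, after a phase rotation making the means nonnegative, to show $v_k\to|\Omega|^{-1/2}$ in $L^2(\Omega)$. From this it deduces that $\di(\Tilde S\gr v_k/\lambda_k)\to-|\Omega|^{-1/2}$ while $\Tilde S\gr v_k/\lambda_k\to 0$. Closedness of $\di$ then gives the contradiction.

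\textbf{How your argument differs.} You replace the abstract closedness step by an explicit test: pairing with a fixed real $\phi\in C_c^\infty(\Omega)$ with $\io\phi\dx x=1$ is exactly what shows that a nonzero constant cannot be the divergence of an $L^2$-small field. This lets you bound the mean $\bar v$ directly rather than through a limiting argument.

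\textbf{What each approach buys.} Your version gives an explicit constant, built from the Poincar\'e--Wirtinger constant, $|\Omega|^{1/2}$, $\|\phi\|_{L^2}$, $\|\gr\phi\|_{L^2}$, and $\|\Tilde S\|$, and affine in $\|\Tilde S\|$. It therefore makes transparent both the uniformity in $\lambda\in\C$ with $|\lambda|\ge1$ and the fact that $\Tilde S$ enters only through its operator norm. The paper's contradiction argument needs no auxiliary test function and avoids the bookkeeping, but the constant it produces is non-explicit.

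Both proofs rest on the same two ingredients: Poincar\'e--Wirtinger on the connected domain, and the weak definition of $\di\Tilde S\gr$ tested against $H^1_0(\Omega)$.
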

We note that $H^{ \di \Tilde{S} \gr}(\Omega)$ is defined analogously to $H^{ \di S \gr}(\Omega)$ in \eqref{eq:defofdivSgr}.
\begin{proof}
    Assume otherwise, there then exists a sequence $(v_k,\lambda_k)_{k\in\N} \subset H^{\di \Tilde{S} \gr}(\Omega)\times \C$ with $\|v_k\|_{L^2(\Omega)} =1$, $|\lambda_k| \geq 1$, and 
    \begin{align}
        \|\gr v_k\|_{L^2(\Omega)}+\frac{1}{|\lambda_k|}\|(\lambda_k+\di \Tilde{S} \gr) v_k\|_{L^2(\Omega)}\leq 1/k.\label{eq:terminvolvingdivSgr}
    \end{align} By potentially replacing $v_k$ with $\exp(i\theta )v_k, \theta \in [0,2\pi)$, we can assume $v_k^0  \coloneq  \frac{1}{|\Omega|}\io v_k \dx x $ to be real and non-negative.
    The Poincaré-Wirtinger inequality now yields \begin{align*}
        \lim_{k\to \infty}\left|1-(v_k^0)|\Omega|^{1/2}\right|&= \lim_{k\to \infty}\left|\|v_k\|_{L^2(\Omega)}-\|v_k^0\|_{L^2(\Omega)} \right|
        \\&\leq \lim_{k\to \infty}\|v_k-v_k^0\|_{L^2(\Omega)}
        \\& \lesssim \lim_{k\to \infty}\|\gr v_k\|_{L^2(\Omega)} = 0,
    \end{align*}
    from which we infer $\lim_{k\to \infty} v_k^0 = {|\Omega|^{-1/2}}$ and even $\lim_{k\to \infty}\|v_k-{|\Omega|^{-1/2}}\|_{L^2(\Omega)}=0.$
    Therefore, the sequence $(v_k)_{k\in \N}$ converges in $L^2(\Omega)$ to the constant function $|\Omega|^{-1/2}.$

    Substituting this into assumption~\eqref{eq:terminvolvingdivSgr} leads to 
    \begin{align*}
        0 &= \lim_{k\to \infty} \frac{1}{|\lambda_k|}\|(\lambda_k+\di \Tilde{S} \gr) v_k\|_{L^2(\Omega)} 
        \\&=\lim_{k\to \infty} \left\|v_k+\di \left(\frac{\Tilde{S} \gr v_k}{\lambda_k}\right)\right\|_{L^2(\Omega)} 
        \\&=\lim_{k\to \infty} \left\||\Omega|^{-1/2}+\di \left(\frac{\Tilde{S} \gr v_k}{\lambda_k}\right)\right\|_{L^2(\Omega)}.
    \end{align*}
    The above shows that the divergence of the sequence $(\frac{\Tilde{S} \gr v_k}{\lambda_k})_{k\in \N}$ converges to a non-zero (constant) function. This, however, contradicts the closedness of the divergence operator on $L^2(\Omega)$ and the assumption \begin{align*}
    \lim_{k \to \infty } \left\|\frac{\Tilde{S} \gr v_k}{\lambda_k}\right\|_{L^2(\Omega)} \lesssim \lim_{k\to \infty} \left\| \frac{ \gr v_k}{\lambda_k}\right\|_{L^2(\Omega)} = 0. &\qedhere
    \end{align*}
    
\end{proof}

Having established these three auxiliary results, we are now in a position to demonstrate that residual bounds on $\AS $ can be reduced to the wave domain, i.e., to functions that vanish on $\Omega_h$, see also Remark~\ref{rem:reductiontoD}.

\begin{proposition}[Reduction to the case $h_1=0$]\label{prop:reductiontohzero}
    Let $(\Omega,\Omega_w,\Omega_h)$ be a Lipschitz interface triple and let $p:\R \to [0,\infty)$ be a weight function satisfying $p(\lambda)\lesssim |\lambda|$ for all $|\lambda|\bb 1.$ 
    If the generator $\AS$ of the coupled wave-heat system~\eqref{eq:1} satisfies the residual estimate
    \begin{align*}
        \|(\lambda i +\AS  )f\|_{\bbH_1} \gtrsim p(\lambda) \|f\|_{\bbH_1}, ~~ |\lambda|\gg1,\lambda\in \R
    \end{align*}
    for every $f = (w_1,w_2,h_1)  \in \dom{\AS}$ with $h_1=0$, then $\AS$ even satisfies the estimate
    \begin{align*}
        \|(\lambda i +\AS  )f\|_{\bbH_1} \gtrsim \left(\frac{p(\lambda)}{|\lambda|}\right)^2 \|f\|_{\bbH_1},~~ |\lambda|\gg1,\lambda\in \R
    \end{align*}
    for all $f \in \dom{\AS}$.
\end{proposition}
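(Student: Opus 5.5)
The plan is to decompose an arbitrary $f=(w_1,w_2,h_1)\in\dom{\AS}$ non-orthogonally as $f=g+e$, where $g$ has vanishing heat component and $e$ is built from $h_1$ alone. Throughout, write $F \coloneq \|f\|_{\bbH_1}$ and $R\coloneq\|(\lambda i+\AS)f\|_{\bbH_1}$.

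\emph{Construction of the splitting.} Let $u_1,u_2$ be the extensions of $h_1$ given by Lemma~\ref{lem:smallextensionoperator}. Set
$$e\coloneq(u_1,u_2,h_1)\in\dom{\AS}, \qquad g\coloneq f-e=(w_1-u_1,\,w_2-u_2,\,0)\in\dom{\AS}.$$
By hypothesis, $g$ then satisfies $p(\lambda)\|g\|_{\bbH_1}\lesssim\|(\lambda i+\AS)g\|_{\bbH_1}\le R+\|(\lambda i+\AS)e\|_{\bbH_1}$. It therefore remains to bound $\|e\|_{\bbH_1}$ and $\|(\lambda i+\AS)e\|_{\bbH_1}$ in terms of $F$ and $R$.

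\emph{Estimates for the heat part.}
\begin{enumerate}
\item Lemma~\ref{lem:A21h} gives $\|\gr h_1\|_{L^2(\Omega_h)}^2\lesssim FR$.
\item To control $\|h_1\|_{L^2(\Omega_h)}$, I use Lemma~\ref{lem:poincare} on $\Omega_h$ with $\Tilde S=S$ and spectral parameter $i\lambda$, rather than the classical Poincaré inequality. The reason is that $\Gamma_h$ may be empty, for instance when $\Omega_h$ is surrounded by $\Omega_w$. The third component of $(\lambda i+\AS)f$ is exactly $(i\lambda+\di S\gr)h_1$, so
$$\|h_1\|_{L^2(\Omega_h)}\lesssim \sqrt{FR}+R/|\lambda|.$$
\item Hence $\|h_1\|_{H^1(\Omega_h)}\lesssim\sqrt{FR}+R$ for $|\lambda|\ge1$.
\item Lemma~\ref{lem:smallextensionoperator}(ii),(iii) then yield $\|e\|_{\bbH_1}\lesssim \sqrt{FR}+R$.
\end{enumerate}

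\emph{Residual of $e$.} The components of $(\lambda i+\AS)e$ are $i\lambda u_1+\di u_2$, $i\lambda u_2+\gr u_1$ and $(i\lambda+\di S\gr)h_1$. The last is bounded by $R$. The terms $\lambda u_1$, $\lambda u_2$ and $\gr u_1$ are $\lesssim |\lambda|(\sqrt{FR}+R)$ by the previous step. For $\di u_2$, Lemma~\ref{lem:smallextensionoperator}(iv) gives
$$\|\di u_2\|\lesssim \|S\gr h_1\|_{H^\di(\Omega_h)}\lesssim \|\gr h_1\|+\|\di S\gr h_1\|\le \|\gr h_1\|+R+|\lambda|\,\|h_1\|.$$
Altogether, $\|(\lambda i+\AS)e\|_{\bbH_1}\lesssim|\lambda|\sqrt{FR}+|\lambda| R$.

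\emph{Conclusion.} Combining the above,
$$F\le\|g\|+\|e\|\lesssim \frac{|\lambda|\sqrt{FR}+|\lambda|R}{p(\lambda)}+\sqrt{FR}+R\lesssim\frac{|\lambda|}{p(\lambda)}\sqrt{FR}+\frac{|\lambda|}{p(\lambda)}R,$$
where the last step uses $p(\lambda)\lesssim|\lambda|$. Either the first term dominates, which gives $F\lesssim(|\lambda|/p)^2R$, or the second does, which gives $F\lesssim(|\lambda|/p)R\lesssim(|\lambda|/p)^2R$. In both cases $R\gtrsim (p(\lambda)/|\lambda|)^2F$, which is the claimed estimate.

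The main obstacle I expect is the bookkeeping in the residual of $e$. The extensions $u_1,u_2$ are independent of $\lambda$, so multiplying them by $i\lambda$ costs a full factor $|\lambda|$. It is exactly this loss, combined with the square-root bound from Lemma~\ref{lem:A21h}, that produces the squared ratio $(p/|\lambda|)^2$. The other delicate point is controlling $\|h_1\|_{L^2}$ without a boundary condition on $\Gamma_h$, which is why Lemma~\ref{lem:poincare} is used instead of the standard Poincaré inequality.
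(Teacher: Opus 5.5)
Your proposal is correct and follows essentially the same route as the paper. It uses the same splitting $f=(w_1-u_1,w_2-u_2,0)+(u_1,u_2,h_1)$ from Lemma~\ref{lem:smallextensionoperator}, and it controls $\|h_1\|_{H^1(\Omega_h)}$ through Lemma~\ref{lem:A21h} and Lemma~\ref{lem:poincare}; the latter is needed precisely because $\Gamma_h$ may be empty. The differences are cosmetic: you bound the residual of $e$ more crudely ($|\lambda|R$ instead of $R$, which is harmless since $p(\lambda)\lesssim|\lambda|$), and you close with a case distinction instead of Young's inequality. Before dividing by $p(\lambda)$, you should also note that the case $p(\lambda)=0$ is trivial.
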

\begin{proof}
    Let $\lambda\bb 1$ be fixed, let $f = (w_1,w_2,h_1)  \in \dom{\AS}$ be arbitrary and let $u_1 \in H^1_{\Gamma_w}(\Omega),~u_2 \in \gr H^\Delta_{\Gamma_w}(\Omega)$ be the extensions constructed in Lemma~\ref{lem:smallextensionoperator}, which satisfy \begin{enumerate}
            \item $(u_1,u_2,h_1)  \in \dom{\AS}$,
            \item $\|u_1\|_{H^1(\Omega_w)} \lesssim\|h_1\|_{H^1(\Omega_h)},$
            \item $\|u_2\|_{L^2(\Omega_w)}\lesssim\|\gr h_1\|_{L^2(\Omega_h)}$,
            \item $\|u_2\|_{H^\di(\Omega_w)} \lesssim\|S\gr h_1\|_{H^\di(\Omega_h)}.$
        \end{enumerate}
        We can then decompose $f$ into a part with $h_1=0$ and a part bounded by $h_1$, namely 
        $$
            f   = \begin{pmatrix}
                w_1-u_1\\w_2-u_2\\0
            \end{pmatrix} + \begin{pmatrix}
                u_1\\u_2\\h_1
            \end{pmatrix}.
        $$
        Using the assumption and Lemma~\ref{lem:smallextensionoperator} (ii), we estimate 
        \begin{align*}
            p(\lambda)\|f\|_{\bbH_1} &\leq p(\lambda)\left\|\begin{pmatrix}
                w_1-u_1\\w_2-u_2\\0
            \end{pmatrix}\right\|_{\bbH_1}+ p(\lambda)\left\|\begin{pmatrix}
                u_1\\u_2\\h_1
            \end{pmatrix}\right\|_{\bbH_1}\\
            &\lesssim  \left\|(\lambda i+\AS )\begin{pmatrix}
                w_1-u_1\\w_2-u_2\\0
            \end{pmatrix}\right\|_{\bbH_1}+p(\lambda) \|h_1\|_{H^1(\Omega_h)}\\
            &\leq  \left\|(\lambda i+\AS )f\right\|_{\bbH_1}+\left\|(\lambda i+\AS )\begin{pmatrix}
                u_1\\u_2\\h_1
            \end{pmatrix}\right\|_{\bbH_1}+ p(\lambda)\|h_1\|_{H^1(\Omega_h)}.
        \end{align*}
    By the definition of $\AS$ and the lower bound $|\lambda|\gg1$, the second term can be estimated by 
    \begin{align*}
        &\hspace{3ex}\left\|(\lambda i+\AS )\begin{pmatrix}
                u_1\\u_2\\h_1
            \end{pmatrix}\right\|_{\bbH_1} = \left\|\begin{pmatrix}
                \lambda i u_1 +\di u_2\\\lambda i u_2 +\gr u_1\\\lambda i h_1+\di S \gr h_1
            \end{pmatrix}\right\|_{\bbH_1} \\
        &\leq |\lambda|\Big(\|u_1\|_{H^1(\Omega_w)}+\|u_2\|_{L^2(\Omega_w)}+\|h_1\|_{L^2(\Omega_h)}\Big) + \|\di S \gr h_1\|_{L^2(\Omega_h)}+\|u_2\|_{H^\di(\Omega_w)}\\
        &\lesssim |\lambda| \|h_1\|_{H^1(\Omega_h)} + \|S\gr h_1\|_{H^\di(\Omega_h)} \\
        &\lesssim |\lambda| \|h_1\|_{H^1(\Omega_h)} + \|\di S\gr h_1\|_{L^2(\Omega_h)}
        \\ &\lesssim |\lambda| \|h_1\|_{H^1(\Omega_h)} + \|(\lambda i+\di S\gr) h_1\|_{L^2(\Omega_h)}
        \\ & \leq |\lambda| \|h_1\|_{H^1(\Omega_h)} + \left\|(\lambda i+\AS ) f\right\|_{\bbH_1}.
    \end{align*}
Combining this with the previous estimate, we find 
\begin{align*}
   p(\lambda)\|f\|_{\bbH_1}&\lesssim \left\|(\lambda i+\AS )f\right\|_{\bbH_1} + (p(\lambda)+|\lambda|)\|h_1\|_{H^1(\Omega_h)}
   \\&\lesssim \left\|(\lambda i+\AS )f\right\|_{\bbH_1} + |\lambda|\|h_1\|_{H^1(\Omega_h)}.
\end{align*}

We recall $\Ato f = \gr h_1$ from the definition of $\Ato$, so combining Lemma~\ref{lem:A21h} and Lemma~\ref{lem:poincare} leads to 
\begin{align*}
     \|h_1\|_{H^1(\Omega_h)} &\lesssim \|\gr h_1\|_{L^2(\Omega_h)} + |\lambda|^{-1} \|(\lambda i+\di S \gr ) h_1\|_{L^2(\Omega_h)} \\
     &= \|\Ato  f\|_{\bbH_2} + |\lambda|^{-1} \|(\lambda i+\di S \gr ) h_1\|_{L^2(\Omega_h)}
     \\
     &\lesssim \|f\|_{\bbH_1}^{1/2}\left\|(\lambda i+\AS )f\right\|_{\bbH_1}^{1/2} + |\lambda|^{-1} \left\|(\lambda i+\AS )f\right\|_{\bbH_1}
\end{align*} 
and hence
\begin{align*}
   p(\lambda)\|f\|_{\bbH_1}& \lesssim  \left\|(\lambda i+\AS )f\right\|_{\bbH_1} + |\lambda| \|h_1\|_{H^1(\Omega_h)} \\
   &\lesssim\left\|(\lambda i+\AS )f \right\|_{\bbH_1} + \Big(|\lambda|^{2}\left\|(\lambda i+\AS )f\right\|_{\bbH_1}\Big)^{1/2}\|f\|_{\bbH_1}^{1/2}
\end{align*}
We may assume $p(\lambda) >0$, otherwise the claimed estimate is trivial.
Dividing by $p(\lambda)$ leads to 
\begin{align*}
  \|f\|_{\bbH_1}& \lesssim \frac{1}{p(\lambda)}\left\|(\lambda i+\AS )f\right\|_{\bbH_1} + \left(\frac{|\lambda|^2}{p(\lambda)^2}\left\|(\lambda i+\AS )f\right\|_{\bbH_1}\right)^{1/2}\|f\|_{\bbH_1}^{1/2}
\end{align*}
An application of Young's inequality and the assumption $p(\lambda)\lesssim |\lambda|$ yields the claimed estimate
\begin{align*}
  \|f\|_{\bbH_1}& \lesssim\frac{1}{p(\lambda)}\left\|(\lambda i+\AS )f\right\|_{\bbH_1} + \frac{|\lambda|^2}{p(\lambda)^2}\left\|(\lambda i+\AS )f\right\|_{\bbH_1}
  \\ &\lesssim \frac{|\lambda|^2}{p(\lambda)^2}\left\|(\lambda i+\AS )f\right\|_{\bbH_1}. \qedhere
\end{align*}
\end{proof}
\begin{remark}\label{rem:afterhzeroreductiob}
We note that \begin{align*}
        \left\|(\lambda i+\AS )f\right\|_{\bbH_1} \gtrsim p(\lambda) \|f\|_{\bbH_1},~~ |\lambda|\gg1,\lambda\in \R
    \end{align*}
    for all $f \in \dom{\AS}$ clearly implies the same for all $f=(w_1,w_2,h_1)\in \dom{\AS}$ with $h_1=0.$
    \label{rem:reductiontoD}
    Recalling characterization~\eqref{eq:characterizationofASperp} of $\dom{\AS}$, this is further equivalent to 
    \begin{align} \label{eq:polynomialboundforblock}
        \left\| \left( \lambda i+\begin{pmatrix}
            0 & \di \\ \gr & 0
        \end{pmatrix} \right) \begin{pmatrix}
            w_1\\w_2
        \end{pmatrix}\right\|_{2} \gtrsim p(\lambda) \left\| \begin{pmatrix}
            w_1\\w_2
        \end{pmatrix} \right\|_2, ~~|\lambda|\gg1,\lambda\in \R
    \end{align} 
    for all $(w_1,w_2)  \in H^1_0(\Omega_w)\times \{w\in \gr H^\Delta_{\Gamma_w}(\Omega_w) ~|~ \trg_N(\gr w) = 0\}$, where $f$ corresponds to $(w_1,w_2,0)$.

    This statement is especially interesting due to it not only being equivalent  (up to some loss in the parameter $p(\lambda)$) to resolvent bounds on $\AS$, as Proposition~\ref{prop:reductiontohzero} shows, but also due to it being independent of the heat domain $\Omega_h$, and hence independent of the heat coefficient $S$. We can thus infer that logarithmic and polynomial decay of the wave-heat system is, in some sense, independent of both the heat domain and heat coefficient. For a precise statement, see Theorem~\ref{thm:domdependence} and Theorem~\ref{thm:logdecay}.
\end{remark}

\subsection{Domain-Invariance}\label{sec:domaininvariance}
In the following, we denote by 
$D_M$ the mixed boundary wave equation generator
\begin{align}
    \begin{pmatrix}
        0 & \di \\ \gr & 0
    \end{pmatrix} : \dom{D_M} \subset L^2(\Omega_w)\times \gr H^1_{\Gamma_w}(\Omega_w) \to L^2(\Omega_w)\times \gr H^1_{\Gamma_w}(\Omega_w) \label{eq:defofDM}
\end{align}
with domain
\begin{align*}
    \dom{D_M}&= H^1_{\Gamma_w}(\Omega_w) \times \{v\in \gr H^\Delta_{\Gamma_w}(\Omega_w) ~|~ \trg_N(v)=0\}.
\end{align*}
In this notation, the sought after residual estimate~\eqref{eq:polynomialboundforblock} can be written as 
\begin{align}\label{eq:polynomialboundforD}
    \|(\lambda i+D_M)w\|_2 \gtrsim p(\lambda)\|w\|_2, ~~|\lambda|\gg1,\lambda\in \R,
\end{align} for all $w=(w_1,w_2)  \in \dom{D_M}$ with $\trg(w_1)= 0.$

The domain- and coefficient-dependence of polynomial decay rates on $\TS$ is the main focus of this subsection. 
We restrict ourselves to the dependence of polynomial, not logarithmic, decay rates, as $\TS$ possesses slow logarithmic decay rates on every Lipschitz interface triple, see Theorem~\ref{thm:logdecay}.
Nonetheless, the methods presented here apply analogously to the study of domain- and coefficient-dependence of logarithmic decay rates on $\TS$.

\begin{definition}~\label{def:similar}
\begin{enumerate}
    \item We call $\Phi: \R^n \to \R^n$ a similarity transform with scaling factor $s\in \R\backslash\{0\}$ if there exists some orthogonal matrix $U \in \R^{n\times n}$ and some translation vector $\tau \in \R^n$ such that 
\begin{align*}
    \Phi(x) = s U x +\tau ~\forall x\in \R^n. 
\end{align*}

\item We call an admissible Lipschitz pair $(\Omega'_w, \gamma')$  $s$-similar to a  admissible Lipschitz pair $(\Omega_w, \gamma)$, $s\in \R\backslash\{0\}$, if there exists a similarity transform $\Phi$ with scaling factor $s$ such that $\Phi(\Omega_w) = \Omega_w'$ and $\Phi(\gamma) = \gamma'$. 

More generally, we call two admissible Lipschitz pairs similar if one of them is $s$-similar to the other for some $s\in \R\backslash\{0\}$.
It is easy to see that being similar is an equivalence relation.

\item 
Let $(\Omega_w,\gamma)$ be an admissible Lipschitz pair. We call $(\Omega'_w, \gamma')$ a \textit{Lipschitz interface extension} of $(\Omega_w,\gamma)$ if $\Omega'_w\backslash \overline{\Omega_w}$ is  a Lipschitz domain and $(\Omega'_w, \gamma')$ is itself an admissible Lipschitz pair satisfying $$\Omega_w\subset\Omega'_w ~\tn{ and }~\Gamma_w=  \dO_w\backslash \overline{\gamma} \subset \partial \Omega'_w \backslash \overline{ \gamma'} = \Gamma'_w.$$ 

\end{enumerate}
\end{definition}

Intuitively, a Lipschitz interface extension of $(\Omega_w,\gamma)$ can be created by extending $\Omega_w$ through $\gamma$, while keeping the rest of the original boundary $\Gamma_w $ intact. See also Figure~\ref{fig:interfacextension}.

\begin{figure}[htbp]
    \centering
\begin{tikzpicture}[thick, scale=1]

    \begin{scope}[shift={(0,0)}]
        \coordinate (A) at (0, 1.5);
        \coordinate (B) at (0.15, 0.9);
        \coordinate (C) at (0, 0.375);
        \coordinate (D) at (0.225, 0);

        \draw (A) .. controls (-0.75, 1.875) and (-1.5, 1.5) .. (-1.875, 0.75) 
                 .. controls (-2.25, 0) and (-1.5, -0.75) .. (-1.125, -0.6)
                 .. controls (-0.375, -0.375) .. (D);
        
        \draw[densely dashed] (A) -- (B) -- (C) -- (D) node[above] {$\gamma$};

        \draw[loosely dotted] (A) .. controls (0.75, 1.65) and (1.875, 1.35) .. (2.1, 0.75)
                        .. controls (2.25, 0) and (1.125, -0.375) .. (D);

        \node at (-0.9, 0.6) {$\Omega_w$};
        \node at (1., 0.6) {$\Omega_h$};
        \node at (-1.8, 1.35) {$\Gamma_w$};
    \end{scope}

    \draw[->, >=Stealth] (2.4, 1.1) to [out=30, in=150] (3.6, 1.1);

    \begin{scope}[shift={(6,0)}] 
        \coordinate (A2) at (0, 1.5);
        \coordinate (D2) at (0.225, 0);
        
        \coordinate (E) at (1.9, 1.2); 
        \coordinate (F) at (1.6, -0.2);

        \draw (A2) .. controls (-0.75, 1.875) and (-1.5, 1.5) .. (-1.875, 0.75) 
                  .. controls (-2.25, 0) and (-1.5, -0.75) .. (-1.125, -0.6)
                  .. controls (-0.375, -0.375) .. (D2);
        \draw (A2) .. controls (0.7, 1.6) and (1.2, 1.4) .. (E);
        \draw (F) .. controls (1.0, -0.3) and (0.6, -0.1) .. (D2);
         
        \draw[densely dashed] (E) -- (2.0, 0.5) -- (F) node[left, pos=0.2] {\hspace{3mm}$\gamma'$};
        
        \draw[loosely dotted] (E) .. controls (3.2, 1.8) and (4.2, 0.8) .. (4.0, 0.2)
                        .. controls (3.8, -0.8) and (2.5, -1.0) .. (F);

        \node at (-0.3, 0.6) {$\Omega'_w$};
        \node at (2.8, 0.3) {$\Omega'_h$};
        \node at (0.3, 1.8) {$\Gamma'_w$};
    \end{scope}

\end{tikzpicture}
\caption{\textbf{Lipschitz interface extension.} The left panel shows the initial admissible Lipschitz pair $(\Omega_w,\gamma)$. The right panel illustrates the interface-extended, admissible Lipschitz pair $(\Omega_w',\gamma')$ after extending $\Omega_w$ through $\gamma$. In both cases, a potential Lipschitz domain $\Omega_h,\Omega'_h$ meeting $\Omega_w,\Omega'_w$ at the respective interface is indicated.}
    \label{fig:interfacextension}
\end{figure}

The following theorem presents the second main result of this paper.
\begin{theorem}[Domain- and Coefficient-Dependence]\label{thm:domdependence}
    Consider  two Lipschitz interface triples $(\Omega,\Omega_w,\Omega_h),(\Omega',\Omega'_w,\Omega'_h)$  with interfaces $\gamma,\gamma'$, where $(\Omega_w,\gamma)$ is similar to a Lipschitz interface extension of $(\Omega'_w,\gamma')$.
    
    If there exists some accretive heat coefficient $S \in \domS $ such that $\TS$ possesses polynomial decay  on $(\Omega,\Omega_w,\Omega_h)$ with rate $1/d, d>0$, 
  then $\TS$ possesses polynomial decay on $(\Omega',\Omega'_w,\Omega'_h)$ with rate $1/(2d+2)$ for all accretive $S \in \domS .$  
\end{theorem}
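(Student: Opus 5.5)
The plan is to pass between resolvent bounds and residual estimates using Theorem~\ref{thm:abstractresolventbounds}, and to use Remark~\ref{rem:reductiontoD} together with Proposition~\ref{prop:reductiontohzero} to reduce everything to the heat-independent estimate \eqref{eq:polynomialboundforD} for $D_M$. Concretely, polynomial decay of $\TS$ on $(\Omega,\Omega_w,\Omega_h)$ with rate $1/d$ gives, by Theorem~\ref{thm:abstractresolventbounds}(ii), the residual bound $\|(\lambda i+\AS)f\|_{\bbH_1}\gtrsim |\lambda|^{-d}\|f\|_{\bbH_1}$ for $|\lambda|\gg1$. Remark~\ref{rem:reductiontoD} then yields \eqref{eq:polynomialboundforD} with $p(\lambda)=|\lambda|^{-d}$ for all $w=(w_1,w_2)\in\dom{D_M}$ on $(\Omega_w,\gamma)$ with $\trg(w_1)=0$. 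The goal is to transport this estimate to $(\Omega'_w,\gamma')$ with the same $p$. Once that is done, Remark~\ref{rem:reductiontoD} read backwards on the primed triple gives the hypothesis of Proposition~\ref{prop:reductiontohzero}, for any accretive $S$. Since $p(\lambda)\lesssim|\lambda|$, that proposition gives a lower bound with weight $(p(\lambda)/|\lambda|)^2=|\lambda|^{-2d-2}$. Theorem~\ref{thm:abstractresolventbounds}(ii) then gives decay rate $1/(2d+2)$.

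First I would dispose of the similarity. Let $\Phi(x)=sUx+\tau$. For $(w_1,w_2)$ on the image pair, set $\tilde w_1=w_1\circ\Phi$ and $\tilde w_2=sU^\top (w_2\circ\Phi)$, after adjusting normalizations. This maps $\dom{D_M}$ bijectively onto the corresponding domain, preserves the trace conditions on $\gamma$ and $\Gamma_w$, and satisfies $(\lambda i+D_M)$ on the image $\leftrightarrow s^{-1}(s\lambda i+D_M)$ on the preimage, up to fixed $L^2$-scaling factors. Hence an estimate with $p(\lambda)=|\lambda|^{-d}$ transfers with $p$ replaced by a constant multiple of $|s\lambda|^{-d}\approx|\lambda|^{-d}$. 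So we may assume that $(\Omega_w,\gamma)$ itself is a Lipschitz interface extension of $(\Omega'_w,\gamma')$. That is, $\Omega'_w\subset\Omega_w$, $\Gamma'_w\subset\Gamma_w$, and $\Omega_w\backslash\overline{\Omega'_w}$ is Lipschitz.

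The core step is then the extension by zero. Take $(w_1,w_2)\in\dom{D_M}$ on $\Omega'_w$ with $\tr^{\gamma'}(w_1)=0$. Then $w_1\in H^1_0(\Omega'_w)$, so its zero extension $\tilde w_1$ lies in $H^1_0(\Omega_w)$. Since $\tr^{\gamma'}_N(w_2)=0$, Proposition~\ref{prop:patching}(ii), applied across $\gamma'$ to the triple formed by $\Omega'_w$ and $\Omega_w\backslash\overline{\Omega'_w}$, shows that the zero extension $\tilde w_2$ lies in $H^\di(\Omega_w)$, with $\di\tilde w_2$ equal to the zero extension of $\di w_2$. Since $\tilde w_2$ vanishes on the added region, which borders $\gamma$, its normal trace on $\gamma$ is $0$.

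The main obstacle is that $\tilde w_2$ is generally not in $\gr H^1_{\Gamma_w}(\Omega_w)$: a potential would have to be constant on the added region, while the potential of $w_2$ need not be constant on $\gamma'$. I would fix this with the Helmholtz projection $\bbP_\gamma$ used in Lemma~\ref{lem:smallextensionoperator}. Write $\tilde w_2=g+k$ with $g=(1-\bbP_\gamma)\tilde w_2\in\gr H^\Delta_{\Gamma_w}(\Omega_w)$ and $k=\bbP_\gamma\tilde w_2$, which is divergence-free with $\trg_N(k)=0$. Then $\di g=\di\tilde w_2$ and $\trg_N(g)=0$, so $(\tilde w_1,g)\in\dom{D_M}$ on $\Omega_w$ with vanishing trace on $\gamma$. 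Moreover
\[
(\lambda i+D)(\tilde w_1,\tilde w_2)=(\lambda i\tilde w_1+\di g,\ \lambda i g+\gr\tilde w_1)+(0,\lambda i k),
\]
and the two summands are orthogonal, because $\gr\tilde w_1\in\gr H^1_{\Gamma_w}(\Omega_w)\perp k$. Hence the residual $R$ of $(w_1,w_2)$ on $\Omega'_w$, which equals the norm of the left-hand side, dominates both $\|(\lambda i+D_M)(\tilde w_1,g)\|_2\gtrsim|\lambda|^{-d}\|(\tilde w_1,g)\|_2$ and $|\lambda|\|k\|_2$. It follows that $\|(w_1,w_2)\|_2\le\|(\tilde w_1,g)\|_2+\|k\|_2\lesssim(|\lambda|^d+|\lambda|^{-1})R\lesssim|\lambda|^dR$. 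This is \eqref{eq:polynomialboundforD} on $(\Omega'_w,\gamma')$ with the same weight $p(\lambda)=|\lambda|^{-d}$, and the chain in the first paragraph then concludes the proof.
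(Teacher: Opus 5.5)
Your proposal is correct and follows the paper's proof essentially step for step. You use Theorem~\ref{thm:abstractresolventbounds}(ii) and Remark~\ref{rem:reductiontoD} to get the $D_M$-estimate on $(\Omega_w,\gamma)$, and transfer it by similarity (Lemma~\ref{lem:similar}) and by zero extension plus the Helmholtz projection with the orthogonal splitting (Proposition~\ref{prop:interfacextension}); there you use $|\lambda|\|k\|_2\le R$ where the paper uses $p\le 1$, to the same effect. You then conclude with Proposition~\ref{prop:reductiontohzero} and Theorem~\ref{thm:abstractresolventbounds}(ii). One small correction: in the similarity step the second component should be $U^\top(w_2\circ\Phi)$ without the factor $s$ (or both components scaled by the same factor), since otherwise $(\lambda i+D_M)$ does not transform into a multiple of $(\mu i+D_M)$ at a single frequency $\mu$.
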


In particular, this theorem shows that polynomial decay rates on $\TS$ stay preserved, even when one changes the heat coefficients $S$ or increases the size of the interface $\gamma.$

On our way to proving Theorem~\ref{thm:domdependence}, we first demonstrate how similarity affects the sought after residual estimate~\eqref{eq:polynomialboundforD}. 

\begin{lemma} \label{lem:similar}
    Consider two $s$-similar admissible Lipschitz pairs $(\Omega_w, \gamma),(\Omega'_w, \gamma')$, $s\neq 0$, and some weight function $p:\R \to [0,\infty)$.
Assume that the mixed boundary wave equation generator $D_M$ has residual estimates on $\Omega'_w$, i.e., \begin{align*}
    \|(\lambda i+D_M)w\|_{L^2(\Omega'_w)} \geq p(\lambda)\|w\|_{L^2(\Omega'_w)}, ~~|\lambda|\gg1,\lambda\in \R
\end{align*} for all $w=(w_1,w_2)  \in \dom{D_M(\Omega'_w)}$ with $\tr^{\gamma'}(w_1)= 0,$ 
then
$D_M$ has residual estimates \begin{align*}
    \|(\lambda i+D_M)w\|_{L^2(\Omega_w)}\geq |s|p\left(\frac{\lambda}{s}\right)\|w\|_{L^2(\Omega_w)}, ~~|\lambda|\gg1,\lambda\in \R
\end{align*}  for all $w=(w_1,w_2)  \in \dom{D_M(\Omega_w)}$ with $\trg(w_1)= 0.$
\end{lemma}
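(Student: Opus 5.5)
The plan is a direct change of variables along the similarity transform $\Phi(x) = sUx+\tau$ with $\Phi(\Omega_w)=\Omega'_w$ and $\Phi(\gamma)=\gamma'$. Given $w=(w_1,w_2)\in\dom{D_M(\Omega_w)}$ with $\trg(w_1)=0$, I define the transported pair on $\Omega'_w$ by
\begin{align*}
    w'_1(y) \coloneq w_1(\Phi\inv(y)), \qquad w'_2(y) \coloneq U\, w_2(\Phi\inv(y)), \qquad y\in \Omega'_w .
\end{align*}
The argument then consists of three steps: domain invariance, intertwining of the operators, and scaling of the norms.

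\textbf{Step 1 (domain invariance).} I first check that $w'\in \dom{D_M(\Omega'_w)}$ and $\tr^{\gamma'}(w'_1)=0$. Since $\Phi$ is a bi-Lipschitz (indeed affine) diffeomorphism, composition with $\Phi\inv$ maps $H^1(\Omega_w)$ onto $H^1(\Omega'_w)$. It commutes with the restricted traces and sends $\Gamma_w=\dO_w\backslash\overline{\gamma}$ onto $\Gamma'_w$, so the Dirichlet conditions on $\Gamma_w$ and on $\gamma$ transfer. For the second component I write $w_2=\gr q$ with $q\in H^\Delta_{\Gamma_w}(\Omega_w)$. By the chain rule $\gr'(q\circ\Phi\inv) = \tfrac1s U(\gr q)\circ\Phi\inv$, hence $w'_2 = s\,\gr'(q\circ\Phi\inv)\in \gr H^1_{\Gamma'_w}(\Omega'_w)$. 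Similarly $\di' w'_2 = \tfrac1s(\di w_2)\circ\Phi\inv\in L^2$. For the condition $\tr^{\gamma'}_N(w'_2)=0$ I would use the characterization \eqref{eq:normaltraceidentity}. Given a test function $g'\in H^1(\Omega'_w)$ vanishing on $\Gamma'_w$, I pull it back to $g=g'\circ\Phi$ and change variables in $\int_{\Omega'_w} w'_2\cdot\gr' g' + \di'(w'_2) g'\dx y$. This yields $|s|^{n}\cdot\tfrac1s$ times the corresponding integral on $\Omega_w$, which vanishes because $\trg_N(w_2)=0$. This also sidesteps any orientation issue when $s<0$ or $\det U<0$, since the outward normal is never used explicitly.

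\textbf{Step 2 (intertwining).} The chain-rule formulas from Step 1 give
\begin{align*}
    \bigl(\tfrac{\lambda}{s} i + D_M(\Omega'_w)\bigr) w' = \tfrac1s\, \mathcal{U}\bigl((\lambda i + D_M(\Omega_w))w\bigr),
\end{align*}
where $\mathcal{U}(a,b) \coloneq (a\circ\Phi\inv, U b\circ\Phi\inv)$. I expect the first component to be immediate. In the second component, $\tfrac{\lambda}{s} i\,U w_2\circ\Phi\inv + \tfrac1s U(\gr w_1)\circ\Phi\inv$ is exactly $\tfrac1s U\bigl(\lambda i w_2+\gr w_1\bigr)\circ\Phi\inv$.

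\textbf{Step 3 (norm scaling).} Since $U$ is orthogonal and the Jacobian of $\Phi$ is $|s|^n$, we have $\|\mathcal{U}v\|_{L^2(\Omega'_w)} = |s|^{n/2}\|v\|_{L^2(\Omega_w)}$. For $|\lambda|\gg1$ we also have $|\lambda/s|\gg1$, so the hypothesis applied at $\lambda/s$ gives
\begin{align*}
    |s|^{n/2-1}\|(\lambda i+D_M)w\|_{L^2(\Omega_w)} \geq p\left(\tfrac{\lambda}{s}\right)|s|^{n/2}\|w\|_{L^2(\Omega_w)} .
\end{align*}
Multiplying by $|s|^{1-n/2}$ yields the claimed estimate.

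The only step requiring care is the first one, namely that the transported second component lies in $\gr H^\Delta_{\Gamma'_w}(\Omega'_w)$ with vanishing normal trace on $\gamma'$. I expect the weak (test-function) characterization of the restricted normal trace to settle it cleanly.
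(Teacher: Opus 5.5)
Your proposal is correct and follows essentially the paper's argument: the same change of variables along $\Phi$, the same rotation of the vector component by $U$, and the same intertwining and $|s|^{n/2}$-scaling computation. The paper runs the map in the opposite direction, pulling back via $\Psi(w)=(w_1\circ\Phi,\,U^\top w_2\circ\Phi)$, which is the inverse of your map; your weak-identity treatment of the normal trace on $\gamma'$ via \eqref{eq:normaltraceidentity} is a slightly cleaner substitute for the paper's pointwise transformation rule for $\trg_N$, and pushing forward from $\Omega_w$ means you never need surjectivity of the transport map.
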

\begin{proof}
    We first collect properties of how similarity transforms interact with $\di,\gr$, trace operators, and $L^2$-norms. Let $\Phi:\R^n \to \R^n, x \mapsto  sUx +\tau$ be a similarity transform with $\Phi(\Omega_w) = \Omega_w', \Phi(\gamma) = \gamma'$. We observe, for $f\in H^1(\Omega'_w), F\in H^\di(\Omega'_w), x \in \Omega_w$, the identities
    \begin{enumerate}
       
        \item $\di(U^\top F \circ \Phi)(x) = s(\di F)(\Phi(x))$,
        \item $\gr (f\circ \Phi)(x) = s U^\top (\gr f)(\Phi (x))$,
         \item $\|f\circ \Phi\|_{L^2(\Omega_w)} = |s|^{-n/2}\|f\|_{L^2(\Omega'_w)}  , $ 
        \item $\trg(f \circ \Phi) = \tr^{\gamma'}(f)\circ \Phi,$
        \item $\trg_N(U^\top F \circ \Phi) = \tr^{\gamma'}_N(F)\circ \Phi.$
    \end{enumerate}
    We note that the last identity is only well-defined if $\tr_N^{\gamma'}(F)$ is an $L^2(\gamma')$-function, not just an $\hnehz(\gamma')$-distribution. However, this is no obstruction, as every element $w\in \dom{D_M(\Omega_w)}$ satisfies $\trg_N(w_2)=0.$
    
    We observe that if $w = (w_1,w_2) $ is an element of $\dom{D_M(\Omega'_w)}$ with $\tr^{\gamma'}(w_1) = 0$, then
    $$\Psi(w)  \coloneq  \begin{pmatrix}
        \Psi_1(w)\\\Psi_2(w)
    \end{pmatrix}  \coloneq \begin{pmatrix}
        w_1 \circ \Phi\\ U^\top w_2\circ \Phi
    \end{pmatrix} $$ is an element of $\dom{D_M(\Omega_w)}$ with $\trg(\Psi_1(w)) = 0.$ As $U$ is invertible, we find that $\Psi$ is a linear bijection between 
    $$ \Xi  \coloneq  \{w \in \dom{D_M(\Omega'_w)}~|~ \tr^{\gamma'}(w_1) = 0\}$$ and $$ \{w \in \dom{D_M(\Omega_w)}~|~ \tr^{\gamma}(w_1) = 0\}.$$

    Now assume that we have the residual bounds 
\begin{align*}
    \|(\lambda i+D_M)w\|_{L^2(\Omega'_w)}\geq p(\lambda)\|w\|_{L^2(\Omega'_w)}, ~~|\lambda|\gg1,\lambda\in \R,w\in \Xi,
\end{align*}
it suffices to show \begin{align*}
    \|(\lambda i+D_M)\Psi (w)\|_{L^2(\Omega_w)}\geq p(\lambda)\|\Psi (w)\|_{L^2(\Omega_w)}, ~~|\lambda|\gg1,\lambda\in \R,w\in \Xi.
\end{align*} The reverse implication then follows as similarity is an equivalence relation.

Using the identities~(i)--(v) from the beginning of this proof, we directly compute
\begin{align*}
     \|(\lambda i+D_M)\Psi (w)\|_{L^2(\Omega_w)} & = \left\|\begin{pmatrix}
         \lambda i w_1\circ \Phi +  \di(U^\top w_2 \circ \Phi)
         \\  \lambda i U^\top  w_2\circ \Phi +  \gr( w_1 \circ \Phi)
     \end{pmatrix}\right\|_{L^2(\Omega_w)}
     \\&=  |s|\left\|\begin{pmatrix}
         \frac{\lambda i}{s} w_1\circ \Phi +  \di(w_2) \circ \Phi
         \\ U^\top \left(\frac{\lambda i}{s}  w_2\circ \Phi +  \gr( w_1) \circ \Phi \right)
     \end{pmatrix}\right\|_{L^2(\Omega_w)}
     \\&=  |s|^{-\frac{n}{2}+1}\left\|\begin{pmatrix}
         \frac{\lambda i}{s} w_1+  \di(w_2) 
         \\ \frac{\lambda i}{s}  w_2+  \gr w_1
     \end{pmatrix}\right\|_{L^2(\Omega'_w)}
     \\&=  |s|^{-\frac{n}{2}+1}\left\| \left(\frac{\lambda i}{s} -D_M\right)w \right\|_{L^2(\Omega'_w)}
     \\& \geq p\left(\frac{\lambda}{s}\right)|s|^{-\frac{n}{2}+1}\left\|w \right\|_{L^2(\Omega'_w)}
    \\ &=  p\left(\frac{\lambda}{s}\right)|s|\left\|\Psi(w) \right\|_{L^2(\Omega_w)}\qedhere.
\end{align*}
\end{proof}

\begin{remark}\label{rem:sisconsideredconstant}
    The scaling factor $s\neq 0$ depends only on the domains $\Omega_w,\Omega'_w$ and can hence be considered constant. In particular, for $p\in \R$ and under the assumptions of Lemma~\ref{lem:similar}, we find that the residual estimate \begin{align*}
    \|(\lambda i+D_M)w\|_{L^2(\Omega'_w)} \gtrsim |\lambda|^{p}\|w\|_{L^2(\Omega'_w)}, ~~|\lambda|\gg1,\lambda\in \R
\end{align*} for all $w=(w_1,w_2)  \in \dom{D_M(\Omega'_w)}$ with $\tr^{\gamma'}(w_1)= 0$ is equivalent to the residual estimate \begin{align*}
    \|(\lambda i+D_M)w\|_{L^2(\Omega_w)}\gtrsim |\lambda|^{p}\|w\|_{L^2(\Omega_w)}, ~~|\lambda|\gg1,\lambda\in \R
\end{align*}  for all $w=(w_1,w_2)  \in \dom{D_M(\Omega_w)}$ with $\trg(w_1)= 0.$
\end{remark}

Secondly, we determine how Lipschitz interface extensions influence the sought after residual estimate~\eqref{eq:polynomialboundforD}. 

\begin{proposition}\label{prop:interfacextension}
Let $p:\R \to [0,1]$ be a weight function, 
        let $(\Omega_w,\gamma)$ be an admissible Lipschitz pair and let $(\Omega'_w, \gamma')$ be a Lipschitz interface extension of $(\Omega_w,\gamma)$. If the mixed boundary wave equation generator $D_M$ has residual estimates  \begin{align*}
    \|(\lambda i+D_M)w\|_{L^2(\Omega'_w)} \geq p(\lambda)\|w\|_{L^2(\Omega'_w)}, ~~|\lambda|\gg1,\lambda\in \R,
\end{align*} for all $w=(w_1,w_2)  \in \dom{D_M(\Omega'_w)}$ with $\tr^{\gamma'}(w_1)= 0,$ 
then
$D_M$ has the same residual estimates on $\Omega_w$, i.e., \begin{align*}
    \|(\lambda i+D_M)w\|_{L^2(\Omega_w)}\geq p(\lambda)\|w\|_{L^2(\Omega_w)}, ~~|\lambda|\gg1,\lambda\in \R,
\end{align*}  for all $w=(w_1,w_2)  \in \dom{D_M(\Omega_w)}$ with $\trg(w_1)= 0.$

\end{proposition}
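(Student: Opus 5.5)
The plan is to extend by zero. Take $w=(w_1,w_2)\in\dom{D_M(\Omega_w)}$ with $\trg(w_1)=0$, and define $w'=(w_1',w_2')$ on $\Omega'_w$ as the zero extension of $w_1,w_2$ to $\Omega'_w\backslash\overline{\Omega_w}$. We will show three things: that $w'\in\dom{D_M(\Omega'_w)}$, that $\tr^{\gamma'}(w_1')=0$, and that $(\lambda i+D_M)w'$ is the zero extension of $(\lambda i+D_M)w$. Then both $L^2$-norms are unchanged, and the assumed estimate on $\Omega'_w$ transfers verbatim to $\Omega_w$ with the same weight $p$.

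The key steps go through the patching results of Proposition~\ref{prop:patching}, applied to the Lipschitz interface triple formed by $\Omega'_w$, $\Omega_w$ and $\Omega'_w\backslash\overline{\Omega_w}$.

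\textbf{First component.} We have $\trGw(w_1)=0$ and $\trg(w_1)=0$, so $\tr(w_1)=0$ on all of $\dO_w$. Its trace on the common boundary with $\Omega'_w\backslash\overline{\Omega_w}$ therefore matches that of the zero function. Proposition~\ref{prop:patching}(i) then gives $w_1'\in H^1(\Omega'_w)$, with $\gr w_1'$ equal to the zero extension of $\gr w_1$. Since $\Gamma_w\subset\Gamma'_w$ and the remaining part of $\Gamma'_w$ lies on the boundary of the added piece, where $w_1'$ vanishes, we get $w_1'\in H^1_{\Gamma'_w}(\Omega'_w)$. Moreover $\tr^{\gamma'}(w_1')=0$, because $\gamma'$ lies in the boundary of the added region, where $w'_1=0$.

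\textbf{Second component.} Write $w_2=\gr q$ with $q\in H^\Delta_{\Gamma_w}(\Omega_w)$. The boundary condition $\trg_N(w_2)=0$ makes the normal trace of $w_2$ on the interface agree with that of the zero field, so Proposition~\ref{prop:patching}(ii) gives $w_2'\in H^\di(\Omega'_w)$ with $\di w_2'$ equal to the zero extension of $\di w_2$. To see that $w_2'\in\gr H^\Delta_{\Gamma'_w}(\Omega'_w)$, we do \emph{not} use the zero extension of $q$: its trace on $\gamma$ need not vanish, so that extension would not be $H^1$ and would in any case have the wrong gradient. Instead we invoke the orthogonality characterization from the proof of Theorem~\ref{thm:mildsolutions}: $\gr H^1_{\Gamma'_w}(\Omega'_w)$ is the orthogonal complement of the divergence-free fields with vanishing normal trace on $\gamma'$. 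Given such a field $v$ on $\Omega'_w$, we compute
\[
(w_2',v)_{L^2(\Omega'_w)}=\int_{\Omega_w}\gr q\cdot\overline{v}\dx x=\duality{\tr_N(v|_{\Omega_w})}{\tr(q)}-\int_{\Omega_w} q\,\overline{\di v}\dx x .
\]
The integral term vanishes since $\di v=0$. For the boundary term, $\tr(q)$ vanishes on $\Gamma_w$, so only the pairing on $\gamma$ remains. That pairing vanishes only if $\tr_N(v|_{\Omega_w})$ vanishes on $\gamma$, which does not follow from the assumptions on $v$ alone. We therefore supply it another way: $w_2'$ has vanishing normal trace on $\gamma$, so $w_2'=\gr\tilde q$ where $\tilde q$ solves the mixed Neumann problem on $\Omega'_w$. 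This step is the main obstacle, and the one I would verify most carefully. A cleaner fallback is to check only $w_2'\in\ker(D_M)^\perp$-type membership via the Helmholtz decomposition, or to avoid the issue entirely by noting that the paper's residual estimates only require $w'$ to lie in the closure-invariant domain.

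\textbf{Normal trace and conclusion.} The condition $\tr^{\gamma'}_N(w_2')=0$ holds trivially, because $w_2'$ vanishes near $\gamma'$. Finally, $(\lambda i+D_M)w'$ is the zero extension of $(\lambda i+D_M)w$, so
\[
\|(\lambda i+D_M)w\|_{L^2(\Omega_w)}=\|(\lambda i+D_M)w'\|_{L^2(\Omega'_w)}\ge p(\lambda)\|w'\|_{L^2(\Omega'_w)}=p(\lambda)\|w\|_{L^2(\Omega_w)},
\]
which is the claimed estimate on $\Omega_w$.
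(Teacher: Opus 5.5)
\textbf{There is a genuine gap in your second step.} The zero extension $w_2'$ is in general \emph{not} an element of $\gr H^1_{\Gamma'_w}(\Omega'_w)$. Hence $w'\notin\dom{D_M(\Omega'_w)}$, and the hypothesis cannot be applied to it. (Your treatment of the first component, of $\tr^{\gamma'}(w_1')=0$ and of $\tr^{\gamma'}_N(w_2')=0$ is fine and matches the paper.)

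Vanishing normal trace on $\gamma$ only makes the zero extension an $H^\di$-field. Whether it is a gradient depends on the tangential behaviour, i.e.\ on whether $\trg(q)$ is locally constant. For instance, take $\Omega_w=(0,1)^2$, $\gamma=\{0\}\times(0,1)$, $\Omega'_w=(-1,1)\times(0,1)$, $\gamma'=\{-1\}\times(0,1)$ and $q(x,y)=(1-x^2)\sin(\pi y)$. Then $w=(0,\gr q)$ lies in $\dom{D_M(\Omega_w)}$ with $\trg(w_1)=0$. But any $H^1$-potential of the zero extension of $\gr q$ would be constant on $(-1,0)\times(0,1)$ and equal to $q$ plus a constant on $\Omega_w$. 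This contradicts the trace matching of Proposition~\ref{prop:patching}(i), because $\trg(q)=\sin(\pi y)$ is not constant.

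Your proposed repair conflates $w_2'$ with its Helmholtz projection. The solution $\tilde q\in H^1_{\Gamma'_w}(\Omega'_w)$ of the mixed Neumann problem satisfies $\gr\tilde q=(1-\bbP_{\gamma'})w_2'$, where $\bbP_{\gamma'}$ projects onto divergence-free fields with vanishing normal trace on $\gamma'$. It does not satisfy $\gr\tilde q=w_2'$. The fallbacks do not help either, since the assumed estimate is only available on $\dom{D_M(\Omega'_w)}$.

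\textbf{The missing idea} is to apply the hypothesis to the projected element $u\coloneq (w_1',(1-\bbP_{\gamma'})w_2')$ and to control the defect $\bbP_{\gamma'}w_2'$ separately. This $u$ does lie in $\dom{D_M(\Omega'_w)}$ with $\tr^{\gamma'}(u_1)=0$. Since $\di\bbP_{\gamma'}w_2'=0$, the zero extension of $(\lambda i+D_M)w$ equals $(\lambda i u_1+\di u_2,\ \lambda i u_2+\gr u_1+\lambda i\,\bbP_{\gamma'}w_2')$. Moreover $\lambda i u_2+\gr u_1\in\gr H^1_{\Gamma'_w}(\Omega'_w)=\ker(\bbP_{\gamma'})$ is orthogonal to $\bbP_{\gamma'}w_2'$. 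Hence, for $|\lambda|\geq 1$,
\begin{align*}
\|(\lambda i+D_M)w\|_{L^2(\Omega_w)}^2&=\|(\lambda i+D_M)u\|_{L^2(\Omega'_w)}^2+\lambda^2\|\bbP_{\gamma'}w_2'\|_{L^2(\Omega'_w)}^2\\
&\geq p(\lambda)^2\|u\|_{L^2(\Omega'_w)}^2+\|\bbP_{\gamma'}w_2'\|_{L^2(\Omega'_w)}^2\geq p(\lambda)^2\|w\|_{L^2(\Omega_w)}^2 .
\end{align*}
The last step uses $\|w\|_{L^2(\Omega_w)}^2=\|u\|_{L^2(\Omega'_w)}^2+\|\bbP_{\gamma'}w_2'\|_{L^2(\Omega'_w)}^2$ together with $p\leq 1$. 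This is exactly where the hypothesis $p\leq 1$ enters. That your argument never used it is a symptom of the gap.
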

\begin{proof}
    Let $w=(w_1,w_2) \in \dom{D_M(\Omega_w)}$ be arbitrary with $\trg(w_1) = 0$. We  investigate whether the extension of $w$ by zero to $\Omega'_w \supset \Omega_w$, which we denote by $w'$, is again an element of $\dom{D_M(\Omega'_w)}$  with $\tr^{\gamma'}(w'_1) = 0$.
    
    We first recall 
    $$\dom{D_M(\Omega_w)} =H^1_{\Gamma_w}(\Omega_w) \times \left\{v\in \gr H^1_{\Gamma_w}(\Omega_w) \cap H^\di(\Omega_w) ~|~ \trg_N(v)=0\right\}.$$
    Since $\trg(w_1) = 0 = \trg_N(w_2)$, Proposition~\ref{prop:patching} guarantees that $w'$ is again an element of $H^1(\Omega'_w)\times H^\di(\Omega'_w)$. As $w'$ is identically zero on $\Omega'_w\backslash \overline{\Omega_w}$ and hence has zero trace on $$\partial \left(\Omega'_w \backslash \Omega_w \right)  \supset \left(\Gamma'_w \backslash \Gamma_w \right) \cup \gamma',$$  we additionally find $\tr^{\Gamma'_w}(w'_1) = 0$ and $\tr^{\gamma'}_N(w'_2)=0.$
    In total, the only property missing to show 
     $$w'\in \dom{D_M(\Omega'_w)} =H^1_{\Gamma'_w}(\Omega'_w) \times \left\{v\in \gr H^1_{\Gamma'_w}(\Omega'_w) \cap H^\di(\Omega'_w) ~|~ \tr^{\gamma'}_N(v)=0\right\}$$ is the inclusion $w'_2 \in \gr H^1_{\Gamma'_w}(\Omega'_w)$.
    Unfortunately, $w'_2$ is generally not even an element of $\gr H^1(\Omega'_w)$.

    Instead, we project the second component $w'_2$ onto this space. For this, let $\bbP_{\gamma'}$ denote the Helmholtz projection in $L^2(\Omega'_w)$ with range $$\left\{v \in H^\di(\Omega'_w)~|~ \di v = 0, \tr^{\gamma'}_N(v)=0\right\}$$ and kernel $\gr H^1_{\Gamma'_w}(\Omega'_w)$,   see  Theorem 4.2 in \cite{hodgeonforms}.
    We then define 
    $$u  \coloneq  \begin{pmatrix} w'_1 \\ (1-\bbP_{\gamma'})w'_2\end{pmatrix},$$ 
    which is indeed an element of $\dom{D_M(\Omega'_w)}$, as $\bbP_{\gamma'}w'_2$ has zero divergence and normal trace on $\gamma'$. 
    
    We can now apply the assumption to $u$, which yields
     \begin{align*}
        \|(\lambda i+D_M)u\|_{L^2(\Omega'_w)} \geq p(\lambda)\|u\|_{L^2(\Omega'_w)}, ~~|\lambda|\gg1,\lambda\in \R.
    \end{align*}
    On the other hand, we find
    \begin{align*}
        \|(\lambda i+D_M)w\|_{L^2(\Omega_w)}^2&= \left\|(\lambda i+D_M)w'\right\|_{L^2(\Omega'_w)}^2\\
        & = \left\|\lambda i w'_1 +\di w'_2\right\|_{L^2(\Omega'_w)}^2+\left\|\lambda i w'_2 + \gr w'_1\right\|_{L^2(\Omega'_w)}^2
        \\&= \|\lambda i u_1 +\di u_2\|_{L^2(\Omega'_w)}^2+\left\|\lambda i (u_2 +\bbP_{\gamma'} w'_2) + \gr u_1\right\|_{L^2(\Omega'_w)}^2
    \end{align*}
    for $|\lambda|\bb 1,\lambda \in \R.$
    Keeping in mind the orthogonality of the kernel and range of $\bbP_{\gamma'}$, as well as the fact that $\gr u_1 $ is an element of $\gr H^1_{\gamma'_w}(\Omega'_w)  = \ker(\bbP_{\gamma'})$, we find
    \begin{align*}
        &\hspace{3ex}\|\lambda i u_1 +\di u_2\|_{L^2(\Omega'_w)}^2+\left\|\lambda i (u_2 +\bbP_{\gamma'} w'_2) + \gr u_1\right\|_{L^2(\Omega'_w)}^2
        \\& = \|\lambda i u_1 +\di u_2\|_{L^2(\Omega'_w)}^2+\|\lambda i u_2 + \gr u_1\|_{L^2(\Omega'_w)}^2+\left\|\bbP_{\gamma'} w'_2\right\|_{L^2(\Omega'_w)}^2
        \\&=  \|(\lambda i+D_M)u\|_{L^2(\Omega'_w)}^2 +\left\|\bbP_{\gamma'} w'_2\right\|_{L^2(\Omega'_w)}^2
        \\& \geq p(\lambda)\|u\|_{L^2(\Omega'_w)}^2 +\left\|\bbP_{\gamma'} w'_2\right\|_{L^2(\Omega'_w)}^2
        \\&\geq p(\lambda) \|w'\|_{L^2(\Omega'_w)} ^2
        = p(\lambda) \|w\|_{L^2(\Omega_w)}^2.
    \end{align*} The penultimate step uses the assumption $p\leq 1$.  We conclude 
     \begin{align*}
    \|(\lambda i+D_M)w\|_{L^2(\Omega'_w)} &\geq p(\lambda)\|w\|_{L^2(\Omega'_w)}, ~~|\lambda|\gg1,\lambda\in \R.   \qedhere
\end{align*}
\end{proof}
With this, we are now able to prove that polynomial decay rates of the wave-heat 
semigroup $\TS$ behave monotonically with respect to $(\Omega_w,\gamma)$, and are independent of $S$ and $\Omega_h.$

\begin{proof}[Proof of Theorem~\ref{thm:logdecay}]
    Let $(\Omega,\Omega_w,\Omega_h),(\Omega',\Omega'_w,\Omega'_h)$ be as in the statement of the Theorem.
    Additionally, let $\TS,\AS$ denote the wave-heat semigroup and its generator on $(\Omega,\Omega_w,\Omega_h)$, while $(\TS)',(\AS)'$ denote the wave-heat semigroup and its generator on $(\Omega',\Omega'_w,\Omega'_h)$. Similarly, we denote by $D_M,D'_M$ the operator $D_M$ on  $(\Omega,\Omega_w,\Omega_h),(\Omega',\Omega'_w,\Omega'_h)$, respectively.
    
    Assume $\TS$ possess polynomial decay with rate $1/d, d>0$ for some accretive $S \in \domS $. 
    By Theorem~\ref{thm:abstractresolventbounds}.(ii), this implies the resolvent bound 
    $$\|(\lambda i +\AS ) f\|_{\bbH_1} \gtrsim |\lambda|^{-d}\|f\|_{\bbH_1},~~|\lambda|\gg1,\lambda\in \R, f\in \dom{\AS}.$$
    As in Remark~\ref{rem:afterhzeroreductiob}, we restrict to those $f=(w_1,w_2,h_1)\in \dom{\AS}$ with $h_1=0$, which yields
    \begin{align*}
    \|(\lambda i+D_M)w\|_2 \gtrsim |\lambda|^{-d}\|w\|_2, ~~|\lambda|\gg1,\lambda\in \R
\end{align*} for all $w=(w_1,w_2)  \in \dom{D_M}$ with $\trg(w_1)= 0,$ see also the beginning of Subsection~\ref{sec:domaininvariance}.
Since $(\Omega_w,\gamma)$ is similar to a Lipschitz interface extension of $(\Omega'_w,\gamma')$, Remark~\ref{rem:sisconsideredconstant} and Proposition~\ref{prop:interfacextension} imply \begin{align*}
    \|(\lambda i+D'_M)w'\|_2 \gtrsim |\lambda|^{-d}\|w'\|_2, ~~|\lambda|\gg1\lambda\in \R,
\end{align*} for all $w'=(w'_1,w'_2)  \in \dom{D'_M}$ with $\tr^{\gamma'}(w_1)= 0.$
After rewriting this estimate as a residual estimate on $(\AS)'$,
namely  $$\|(\lambda i +(\AS)') f'\|_{\bbH_1} \gtrsim |\lambda|^{-d}\|f'\|_{\bbH_1},~~|\lambda|\gg1,\lambda\in \R$$ for all $f' =(w'_1,w'_2,h'_1)\in \dom{(\AS)'}$ with $h'_1=0,$ we can apply Proposition~\ref{prop:reductiontohzero} to infer 
$$\|(\lambda i +(\AS)') f'\|_{\bbH_1} \gtrsim |\lambda|^{-(2d+2)}\|f'\|_{\bbH_1},~~|\lambda|\gg1,\lambda\in \R, f'\in \dom{(\AS)'}.$$
Lastly, Theorem~\ref{thm:abstractresolventbounds}.(ii) yields the claimed 
 polynomial decay of  $(\TS)'$ with rate $1/(2d+2)$  on $(\Omega',\Omega'_w,\Omega'_h)$ for each accretive $S\in \domS.$ 
\end{proof}

\section{Specific decay rates}
\label{sec:mainspecificrates}

\subsection{Logarithmic decay}
The logarithmic decay in the sense of \eqref{eq:logdecaydef} has already been established in the case $S\equiv 1$ by Fathallah \cite{ines}, 
using the implication $$\eqref{eq:expresolventbound}\implies \eqref{eq:logdecaydef}$$ as well as Carleman estimates near the interface $\gamma$, following the work of Bellassoued \cite{Bellassoued}. 

We extend the results of Fathallah \cite{ines} to non-trivial heat coefficients $S$ using the reduction to the wave domain established in Proposition~\ref{prop:reductiontohzero}.

\begin{theorem}[Logarithmic Decay]\label{thm:logdecay}
    Let $(\Omega,\Omega_w,\Omega_h)$ be a Lipschitz interface triple in $\R^n$ and let $S\in \domS $ be an accretive heat coefficient, i.e., 
    $$\re(\overline{v}^\top S(x) v) \gtrsim  |v|^2 $$ for all $x\in \C^n$ and almost all $x \in \Omega_h$. Then the semigroup $\TS$ generated by the generator $\AS$ of the coupled wave-heat system~\eqref{eq:1} possesses logarithmic decay with rate $1/d=1$, i.e.,  \begin{align*}
        \forall k \in \N ~ \exists C_k>0:  \|\TS(t)x\|_{\bbH_1} \leq \frac{C_k}{\log(t+2)^k} \|x\|_{(\AS)^k} ~ \forall t\geq0,  x\in \dom{(\AS)^k}.
    \end{align*}
   
\end{theorem}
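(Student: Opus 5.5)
The plan is to reduce the claim, via Theorem~\ref{thm:abstractresolventbounds}.(i), to an exponential resolvent bound with power $d=1$ for $\AS$. The reduction to the wave domain in Proposition~\ref{prop:reductiontohzero} then removes all dependence on $S$ and $\Omega_h$. By Lemma~\ref{lem:contractive} and Proposition~\ref{prop:kernelandspectrum}, $\TS$ is a bounded semigroup whose generator $\AS$ satisfies $\sigma(\AS)\cap i\R=\varnothing$. It therefore suffices to show
\begin{align*}
\|(\lambda i+\AS)f\|_{\bbH_1}\gtrsim e^{-C|\lambda|}\|f\|_{\bbH_1},\quad |\lambda|\gg 1,\ f\in\dom{\AS}.
\end{align*}
Apply Proposition~\ref{prop:reductiontohzero} with $p(\lambda)=e^{-C|\lambda|}$, which certainly satisfies $p(\lambda)\lesssim|\lambda|$. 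It then suffices to prove the estimate with $p$ only for $f=(w_1,w_2,0)$. The loss $(p(\lambda)/|\lambda|)^2=|\lambda|^{-2}e^{-2C|\lambda|}\gtrsim e^{-C'|\lambda|}$ keeps the power $d=1$.

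By Remark~\ref{rem:reductiontoD}, the remaining claim is the $S$-independent estimate \eqref{eq:polynomialboundforD} with $p(\lambda)=e^{-C|\lambda|}$. It is to be shown for all $w=(w_1,w_2)\in\dom{D_M}$ with $\trg(w_1)=0$, where $\trg_N(w_2)=0$ holds automatically. Write $w_2=\gr\phi$ with $\phi\in H^\Delta_{\Gamma_w}(\Omega_w)$, and set $F=(\lambda i+D_M)w$. Then $w_1$ and $\phi$ satisfy a Helmholtz-type system. Eliminating $w_2$ gives $(\Delta+\lambda^2)w_1 = \lambda i F_1-\di F_2$ in $\Omega_w$, with $w_1=0$ on $\Gamma_w$, $w_1=0$ on $\gamma$, and $\partial_\nu w_1=\trg_N(\gr w_1)=\trg_N(F_2)$ on $\gamma$, since $\trg_N(w_2)=0$. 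So $w_1$ has (almost) vanishing Cauchy data on the open piece $\gamma$ of the boundary. The required statement is a quantitative unique continuation estimate: a function solving a Helmholtz equation with small Cauchy data on $\gamma$ is bounded by $e^{C|\lambda|}$ times the data and the source.

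The main obstacle is this quantitative unique continuation, with exponential cost $e^{C|\lambda|}$, from a boundary piece in a merely Lipschitz domain. I would first try to import the Carleman-estimate argument of Fathallah \cite{ines} and Bellassoued \cite{Bellassoued}. To avoid boundary regularity issues at $\gamma$, I would extend $w$ by zero across $\gamma$ into a small Lipschitz collar $\Omega'_w\setminus\overline{\Omega_w}$. This is exactly as in the proof of Proposition~\ref{prop:interfacextension}: the zero Dirichlet and Neumann-type traces make the patching legitimate by Proposition~\ref{prop:patching}. The result is an interior problem on $\Omega'_w$ in which $w_1$ vanishes on an open set. The interior Carleman and interpolation estimates of Lebeau--Robbiano type then give
\begin{align*}
\|w_1\|_{H^1(\Omega_w)} \lesssim e^{C|\lambda|}\bigl(\|F\|_{2}+\|w_1\|_{L^2(\text{collar})}\bigr),
\end{align*}
where the last term is zero. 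The Dirichlet condition on $\Gamma_w$ is handled by the standard boundary Carleman estimates for the Laplacian, for which Lipschitz regularity is the delicate point. If that fails, I would reduce via the similarity and extension monotonicity of Lemma~\ref{lem:similar} and Proposition~\ref{prop:interfacextension} to a smoother configuration. Finally, the $w_2$ component is controlled through $\lambda i w_2 = F_2-\gr w_1$, which gives $\|w_2\|\le|\lambda|^{-1}(\|F_2\|+\|w_1\|_{H^1})$. This closes the estimate and yields logarithmic decay with $1/d=1$.
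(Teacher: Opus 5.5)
Your reduction matches the paper's. Both pass through Theorem~\ref{thm:abstractresolventbounds}.(i) and Proposition~\ref{prop:reductiontohzero} with $p(\lambda)=e^{-C|\lambda|}$, so everything rests on the $S$-independent estimate \eqref{eq:polynomialboundforD} with exponential weight. The two routes differ in how that estimate is obtained.

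The paper does no unique-continuation work. It takes Fathallah's resolvent bound for the $S\equiv 1$ system, in the second-order formulation $\mathcal{A}$ on $L^2(\Omega_w)\times H^1_{\Gamma_w}(\Omega_w)\times L^2(\Omega_h)$. It evaluates this bound on the states $(w_1,p,0)$ with $\gr p=w_2$, which lie in $\dom{\mathcal{A}}$ precisely because $h_1=0$. The Poincar\'e inequality on $H^1_{\Gamma_w}(\Omega_w)$ converts the norms back to $\bbH_1$. The case $\Gamma_w=\varnothing$, where Poincar\'e fails, is covered separately by Corollary~\ref{cor:GGCpolynomrate}.

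You instead re-derive the estimate. Zero extension across $\gamma$ is legitimate by Proposition~\ref{prop:patching}, as in Propositions~\ref{prop:interfacextension} and~\ref{prop:GCCimpliesD}. It turns the claim into a Lebeau--Robbiano quantitative unique continuation bound for the Dirichlet Helmholtz equation, with the solution vanishing on an open set. This has real advantages:
\begin{itemize}
\item it identifies the needed bound as the one behind logarithmic decay of damped waves;
\item it never touches the heat part;
\item it needs no case split on $\Gamma_w$.
\end{itemize}
The price is a full Carleman argument. The paper's proof is a few lines long because it observes that the $h_1=0$ restriction of the known $S\equiv 1$ bound \emph{is} the required estimate.

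Two points in your version need correcting.

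First, your fallback for the regularity issue does not work. A Lipschitz interface extension must keep $\Gamma_w\subset\Gamma'_w$, and similarity transforms preserve regularity. So Lemma~\ref{lem:similar} and Proposition~\ref{prop:interfacextension} can only alter the interface and can never smooth a rough $\Gamma_w$. The boundary Carleman estimate at a merely Lipschitz Dirichlet boundary therefore remains an unproved input in your plan. The paper leaves the same point open, since Fathallah's theorem is proved under stronger regularity than Lipschitz.

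Second, the elimination gives $(\Delta+\lambda^2)w_1=\di F_2-\lambda i F_1$; your sign is reversed. Also, $\trg_N(F_2)$ is undefined, because $F_2$ is only in $L^2$. The zero-extension formulation avoids Neumann data, but the source is then only in $H^{-1}$, with norm $\lesssim|\lambda|\,\|F\|_2$. Your Carleman estimate must therefore allow $H^{-1}$ sources, which is harmless at exponential cost.
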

\begin{proof}

   We begin by noting that in Theorem 1.1 of \cite{ines}, Fathallah has already established the resolvent bound
   $$\|(i\lambda +\mathcal{A})\|_{H \to H} \leq C \exp(C |\lambda|)$$ for some $C>0$ and all $\lambda\in \R,|\lambda|\bb 1$, where
    $H$ is the Hilbert space
    $H= L^2(\Omega_w)\times H^1_{\Gamma_w}(\Omega_w) \times L^2(\Omega_h)$ and $\mathcal{A}$ is a modified version of the wave equation generator $\AS$, namely
    \begin{align*}
        \mathcal{A}\begin{pmatrix}
            w_1\\w_2\\h_1
        \end{pmatrix}  \coloneq  \begin{pmatrix}
            0&\Delta& 0\\
            1 & 0 & 0 \\
            0&0 & \Delta
        \end{pmatrix}\begin{pmatrix}
            w_1\\w_2\\h_1
        \end{pmatrix}= \begin{pmatrix}
            \Delta w_2 \\ w_1 \\ \Delta h_1
        \end{pmatrix} 
    \end{align*}with
    \begin{align*}
         \dom{\mathcal{A}} \coloneq  &\left\{ \begin{pmatrix}
            w_1\\w_2\\h_1
        \end{pmatrix} \in   H^1_{\Gamma_w}(\Omega_w)\times H^\Delta_{\Gamma_w}(\Omega_h)\times H^\Delta_{\Gamma_h}(\Omega_h)~\right|
        \\&\left.\phantom{\begin{pmatrix}
            .\\.\\.
        \end{pmatrix} }\trg(w_1) = \trg(h_1), \trg_N(\gr w_2) = -\trg_N(\gr h_1)  \right\}.
    \end{align*}
    Our main task will be transforming this resolvent bound on $\mathcal{A}$ into the residual bound~\eqref{eq:resolventestimatepolynomial2} on $\AS$.
    
    For this, let us first assume $\Gamma_w \neq \varnothing$ and let 
    \begin{align*}
     \begin{pmatrix}
        w_1 \\ w_2 \\0
    \end{pmatrix} \in \dom{\AS}&=\left\{
            \begin{pmatrix}
                w_1\\w_2\\h_1
            \end{pmatrix}
            \in H^1_{\Gamma_w}(\Omega_w)\times \gr H^\Delta_{\Gamma_w}(\Omega_w)\times H^{\di S \gr}_{\Gamma_h}(\Omega_h)~ \right| \nonumber\\&\hspace{6ex}  \trg(w_1)=\trg(h_1), \trg_N(w_2)=-\trg_N(S \gr h_1)\left. \rule{0cm}{0.75cm}\right\}
    \end{align*}
  be arbitrary. In particular, there exists a potential $p\in H^\Delta_{\Gamma_w}(\Omega_w)$ with $\gr p = w_2.$
  Additionally, we find $$\trg_N(\gr  p) = \trg_N(w_2) = -\trg_N(S \gr h_1) = 0 = -\trg_N(\gr h_1)$$ and hence that $(w_1,p,0) $ is an element of $\dom{\mathcal{A}},$
   allowing us to infer 
   \begin{align*}
       &\hspace{2ex}\left\|\lambda i w_1 + \Delta p\right\|_{L^2(\Omega_w)}+\left\|\lambda i p + w_1\right\|_{H^1(\Omega_w)} \approx \left\|\left(i \lambda+\mathcal{A}\right)\begin{pmatrix}
           w_1 \\ p \\0
       \end{pmatrix}\right\|_{H}\\& \gtrsim \exp(-C|\lambda|) \left\|\begin{pmatrix}
           w_1 \\ p \\0
       \end{pmatrix}\right\|_{H} 
       = \exp(-C|\lambda|)\left(\left\|w_1\right\|_{L^2(\Omega_w)}+\left\| p \right\|_{H^1(\Omega_w)}\right)
   \end{align*} for $\lambda\in\R,|\lambda|\bb 1.$
   
    Recalling $\tr^{\Gamma_w}(w_1) = \tr^{\Gamma_w}(p) =0$ and 
   \begin{align*}
       \left\|\left(i \lambda+\AS \right)\begin{pmatrix}
           w_1 \\ w_2 \\0
       \end{pmatrix}\right\|_{\bbH_1}&=\left\|\left(i \lambda+\begin{pmatrix}
           0 & \di \\ \gr &0
       \end{pmatrix}\right)\begin{pmatrix}
           w_1 \\ w_2
       \end{pmatrix}\right\|_{L^2(\Omega_w)},
   \end{align*}
    the Poincaré-inequality, which uses $\Gamma_w\neq \varnothing$, allows us to compute
    \begin{align*}
         \left\|\left(i \lambda+\AS \right)\begin{pmatrix}
           w_1 \\ w_2 \\0
       \end{pmatrix}\right\|_{\bbH_1} & \approx \|(i\lambda w_1 + \Delta p)\|_{L^2(\Omega_w)}+\|\gr (i\lambda p + w_1)\|_{L^2(\Omega_w)}
       \\& \approx \|(i\lambda w_1 + \Delta p)\|_{L^2(\Omega_w)}+\|i\lambda p + w_1\|_{H^1(\Omega_w)}
       \\& \gtrsim \exp(-C|\lambda|)\left(\left\|w_1\right\|_{L^2(\Omega_w)}+\left\| p \right\|_{H^1(\Omega_w)}\right)
       \\& \approx \exp(-C|\lambda|)\left(\left\|w_1\right\|_{L^2(\Omega_w)}+\left\| w_2 \right\|_{L^2(\Omega_w)}\right)
       \\& \approx \exp(-C|\lambda|)\left\|\begin{pmatrix}
           w_1\\w_2\\0
       \end{pmatrix}\right\|_{\bbH_1}.
    \end{align*}
    Applying Proposition~\ref{prop:reductiontohzero} to this, we deduce
    \begin{align*}
        \|(\lambda i +\AS  )f\|_{\bbH_1} \gtrsim \frac{\exp(-2C|\lambda|)}{|\lambda|^2} \|f\|_{\bbH_1},~~ |\lambda|\gg1,\lambda\in \R
    \end{align*}
    for all $f \in \dom{\AS}$. 
    For large $|\lambda|\bb 1$, one can find a (large) constant $C'>0$ such that $\exp(-2C|\lambda|)|\lambda|^{-2} \gtrsim \exp(-C'|\lambda|)$. 
    Theorem~\ref{thm:abstractresolventbounds}.(i) then yields the claimed logarithmic decay of $\TS$.

    In the case $\Gamma_w = \varnothing$, the domain $\Omega_h$ controls $\Omega_w$ in time (see Remark~\ref{rem:lions}), and we will show an even stronger, polynomial decay rate of $\TS$. This is done in Subsection~\ref{sec:GCC}, specifically Corollary~\ref{cor:GGCpolynomrate}.
\end{proof}

Having established a weak, but domain-independent logarithmic decay on the semigroup $\TS$, the remaining part of this paper will be about demonstrating stronger decay rates for $\TS$, i.e., demonstrating stronger than exponential residual estimates of the form \eqref{eq:polynomialboundforblock} for specific classes of domains.

\subsection{One-dimensional domains}
In \cite{zuazua1d,1dimoptimalrate}, decay rates for the semigroup $\TS$ on bounded $1$-dimensional Lipschitz interface triples were established in the case $S\equiv 1$, i.e., under the assumption that the heat coefficient $S$ is space independent. 
It was shown that $T^\perp_1$ possesses polynomial decay with rate $1/d=2$, and that this rate is optimal. 
Additionally, the same holds if the Dirichlet boundary condition on $\Gamma_w$ is replaced by a Neumann boundary condition. 

Similarly, the case of an unbounded heat domain $\Omega_w$ was considered in \cite{infiniteheatpart}, yielding a slightly slower, optimal, polynomial decay with rate $1/d = 1.$

Instead of applying Theorem~\ref{thm:domdependence} to directly infer that $\TS$ possesses polynomial decay with rate $1/d=1/3$ for all accretive $S\in L^\infty(\Omega_h,\C)$, we instead demonstrate assumption~\eqref{eq:polynomialboundforD} ourselves with $p(\lambda) \equiv 1$, which together with Proposition~\ref{prop:reductiontohzero} yields the improved polynomial decay rate $1/d=1/2$ for all accretive $S\in \domS $.

\begin{lemma}\label{lem:1dresidualbound}
    Let $\Omega_w$ be an interval in $\R,$ and let $\gamma$ be one of the boundary points of $\Omega_w.$ The mixed boundary wave equation generator $D_M$ on $(\Omega_w,\gamma)$, defined in \eqref{eq:defofDM}, satisfies the residual estimate 
    $$\|(\lambda i +D_M)w\|_2 \geq \|w\|_2 $$ for all $\lambda \in \R$ and all  $w=(w_1,w_2)\in \dom{D_M}$ with $\trg(w_1)=w_1\restrict{\gamma}=0.$
\end{lemma}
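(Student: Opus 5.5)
The plan is to reduce the residual estimate to an explicit first-order ODE on the interval, solve it by Duhamel's formula, and estimate the solution pointwise. The step I expect to be the real obstacle is obtaining the constant exactly $1$ rather than a constant depending on the interval length.

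\textbf{Step 1: unpack $\dom{D_M}$ in one dimension.} By similarity (Lemma~\ref{lem:similar}, a reflection and translation with $s=\pm 1$, which leaves $p\equiv 1$ unchanged) we may assume $\Omega_w=(0,L)$ and $\gamma=\{0\}$. Then $\Gamma_w$ is either $\{L\}$ or empty.

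In one dimension every $L^2(0,L)$ function is the derivative of an $H^1$ function, which can be chosen to vanish at $L$. Hence $\gr H^\Delta_{\Gamma_w}(\Omega_w)$ is just $H^1(0,L)$. Moreover, the normal trace at $\gamma$ is the point evaluation up to the sign $\nu=-1$. So every $w\in\dom{D_M}$ with $\trg(w_1)=0$ satisfies
\[
w_1,w_2\in H^1(0,L), \qquad w_1(0)=0=w_2(0),
\]
where the second condition comes from $\trg_N(w_2)=0$.

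\textbf{Step 2: diagonalise the residual.} Set $f\coloneq \lambda i w_1+w_2'$ and $g\coloneq \lambda i w_2+w_1'$, so that $(\lambda i+D_M)w=(f,g)$. Introduce the characteristic variables $a\coloneq w_1+w_2$ and $b\coloneq w_1-w_2$. They satisfy
\[
a'+\lambda i a=f+g, \qquad b'-\lambda i b=g-f, \qquad a(0)=b(0)=0.
\]
Duhamel's formula gives $a(x)=\int_0^x e^{-\lambda i(x-y)}(f+g)(y)\dx y$, and analogously for $b$. Since the exponential factor has modulus one, this yields $|a(x)|\le\int_0^x|f+g|\dx y$ independently of $\lambda\in\R$. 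Cauchy--Schwarz and integration over $x$ then give
\[
\|a\|_2^2\le \tfrac{L^2}{2}\|f+g\|_2^2, \qquad \|b\|_2^2\le \tfrac{L^2}{2}\|f-g\|_2^2.
\]
By the parallelogram law, $\|a\|_2^2+\|b\|_2^2=2\|w\|_2^2$ and $\|f+g\|_2^2+\|f-g\|_2^2=2\|(f,g)\|_2^2$. Combining these,
\[
\|w\|_2\le \tfrac{L}{\sqrt2}\,\|(\lambda i+D_M)w\|_2 \qquad \text{for all } \lambda\in\R.
\]

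\textbf{Step 3: the constant (main obstacle).} Step 2 gives the claimed inequality with constant $1$ directly whenever $L\le\sqrt2$. For a general interval I would normalise the length via Lemma~\ref{lem:similar}. An $s$-similarity with $s=L/\sqrt2$ transports the estimate with $p\equiv1$ on an interval of length $\sqrt 2$ to the estimate with constant $|s|^{-1}=\sqrt2/L$ on $(0,L)$, which is consistent with Step 2.

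I would try first to sharpen the pointwise bound, for instance by integrating from the nearer endpoint where a boundary condition is available. This fails, however, because the only boundary data sit at $x=0$: $\Gamma_w$ constrains $w_1$ alone, whereas both characteristic variables are needed. I therefore do not expect to reach constant exactly $1$ for arbitrary length; at $\lambda=0$ a bump-type $g$ shows that the best constant grows linearly in $L$.

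What I can prove, and what the subsequent application of Proposition~\ref{prop:reductiontohzero} actually uses, is the following. The estimate holds with $p\equiv1$ up to a constant depending only on $|\Omega_w|$, that is, $\|(\lambda i+D_M)w\|_2\gtrsim\|w\|_2$ in the sense of Remark~\ref{rem:sisconsideredconstant}. It holds literally with constant $1$ once the interval is normalised to length at most $\sqrt2$.
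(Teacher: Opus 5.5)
Your argument is the paper's. The paper writes $(\lambda i+D_M)w=f$ as $\dot w=Jw+(f_2,f_1)^\top$ with skew-Hermitian $J$, uses $w(0)=0$ and the unitarity of $e^{J(x-t)}$ to get $|w(x)|\le\int_0^x|f|$, and integrates. You do the same after diagonalising $J$ into $w_1\pm w_2$. Your Cauchy--Schwarz step is slightly sharper than the paper's $L^\infty$ step, giving the factor $L/\sqrt2$ instead of $L$.

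Your caveat about the constant is correct, and it applies to the paper as well. The paper's opening reduction (``after translating and rescaling, we may assume $\Omega_w=(0,1)$'') is not free: by Lemma~\ref{lem:similar} a rescaling multiplies the constant by $|s|$. So the paper's argument only yields $\|(\lambda i+D_M)w\|_2\ge|\Omega_w|^{-1}\|w\|_2$.

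The literal claim with constant $1$ is in fact false for long intervals:
\begin{itemize}
\item On $(0,L)$ at $\lambda=0$, the function $w=(0,\sin(\pi x/(2L)))$ has residual-to-norm ratio $\pi/(2L)$.
\item Remark~\ref{rem:1doptimal} transplanted to $(0,L)$ uses $w_k=\sin(\pi x/L)\,(\sin(k\pi x/L),\,i\cos(k\pi x/L))$ with $\lambda_k=k\pi/L$. Its ratio is $\pi/L$, even as $|\lambda_k|\to\infty$.
\end{itemize}
So the version you prove is the correct statement: $\gtrsim$ with a constant depending only on $|\Omega_w|$, or constant $1$ when $|\Omega_w|\le\sqrt2$. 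It is exactly what Proposition~\ref{prop:reductiontohzero} needs to give Corollary~\ref{cor:1dpolynomialdecay}.

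Two small points:
\begin{itemize}
\item In this lemma $\Gamma_w$ is the other endpoint and is never empty. This is harmless, since you never use the boundary condition there.
\item A localized bump in the residual only produces growth of order $\sqrt L$. Linear growth of the best constant needs a residual spread over the whole interval, as in the two examples above.
\end{itemize}
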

\begin{proof}
    After translating and rescaling, we may assume $\Omega_w= (0,1), \gamma = \{0\}.$ Let $\lambda\in \R,w\in \dom{D_M}$, and set $f=(f_1,f_2)^\top  \coloneq  (\lambda i +D_M)w$.
    A direct computation yields
    $$
        \dot{w}(x) = \begin{pmatrix}
            \dot{w}_1(x)\\\dot{w}_2(x)
        \end{pmatrix} = \begin{pmatrix}
            0 & -i\lambda \\ -i\lambda & 0
        \end{pmatrix}w(x)+\begin{pmatrix}
            f_2(x)\\f_1(x)
        \end{pmatrix}$$ for $x\in (0,1)$.

    In view of $w\in \dom{D_M}$, specifically $w_2(0)=-\trg_N(w_2)=0$ and $w_1(0)=\trg(w_1)=0,$
    we infer 
    $$w(x) = \int_0^x \exp(J(x-t))\begin{pmatrix}
        f_2(t)\\f_1(t)
    \end{pmatrix}\dx t,$$ where $J$ denotes the skew-Hermitian matrix 
    $$J=\begin{pmatrix}
            0 & -i\lambda \\ -i\lambda & 0
        \end{pmatrix}.$$
    Since $\exp(J(x-t))$ is unitary, we find 
    $$|w(x)| \leq \int_0^x \modulus{\begin{pmatrix}
        f_2(t)\\f_1(t)
    \end{pmatrix}} \dx t \leq \|f\|_{L^2((0,1))}~~\forall x\in (0,1),$$ and hence $\|w\|_{L^2((0,1))}\leq \|w\|_{L^\infty((0,1))} \leq \|f\|_{L^2((0,1))},$ which completes the proof. 
\end{proof}
\begin{remark}\label{rem:1doptimal}
   Consider the sequence $$w_k  \coloneq  \sin(\pi x  )\begin{pmatrix}
        \sin(k\pi x)\\
        i\cos(k\pi x)
    \end{pmatrix}, ~~\lambda_k = k\pi,~~k\in \Z$$ on $(\Omega_w,\gamma) = ((0,1), \{0\})$, which satisfies
    \begin{align*}
        w_k \in \dom{D_M}, ~~\trg((w_k)_1)=0,~~
        ((\lambda i +D_M)w_k)(x) = \pi \cos(\pi x)\begin{pmatrix}
            i\cos(\pi k x)\\
            \sin(\pi k x)
        \end{pmatrix}
    \end{align*} and hence $\pi \|w_k\|_2 = \|(\lambda i +D_M)w_k\|_2$ for all $k\in \Z.$
    From this, we see that the residual estimate in Lemma~\ref{lem:1dresidualbound} is optimal up to a constant factor.
\end{remark}
In view of Proposition~\ref{prop:reductiontohzero} and Theorem~\ref{thm:abstractresolventbounds}.(ii), Lemma~\ref{lem:1dresidualbound} directly implies the polynomial decay of the wave-heat semigroup $\TS$. 
We note that in one dimensions, $S\in L^\infty(\Omega_h,\C)$ is accretive if and only if  $\essinf_{x\in \Omega_h} \re(S(x))>0.$
\begin{corollary}[One-dimensional polynomial decay] \label{cor:1dpolynomialdecay}
Let $(\Omega,\Omega_w,\Omega_h)$ be a Lipschitz interface triple in $\R$ and let $S\in L^\infty(\Omega_h,\C)$ satisfy $$\essinf_{x\in \Omega_h} \re(S(x))>0.$$ Then the semigroup $\TS$ of the wave-heat system~\eqref{eq:1} 
    possesses polynomial decay with rate $1/d = 1/2$, i.e., 
\begin{align*}
        \forall k \in \N ~ \exists C_k>0: ~ \left\|\TS(t)x\right\|_{\bbH_1} \leq \frac{C_k}{(1+t)^{\frac{k}{2}}} \|x\|_{(\AS )^k} ~~ \forall t\geq0,  x\in \dom{(\AS )^k}.
\end{align*}
Additionally, $\TS$ does not possess polynomial decay of rate $1/d>2$ for all $S\in L^\infty(\Omega_h,\C)$, as was shown in \cite{1dimoptimalrate}.
\end{corollary}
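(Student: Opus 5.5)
The plan is to combine Lemma~\ref{lem:1dresidualbound} with Proposition~\ref{prop:reductiontohzero} and then apply Theorem~\ref{thm:abstractresolventbounds}.(ii).

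First, fix a one-dimensional Lipschitz interface triple. Then $\Omega$ is a bounded interval, and $\Omega_w,\Omega_h$ are disjoint open subintervals whose closures cover $\overline{\Omega}$. Hence $\gamma$ consists of the single common endpoint, and $\Gamma_w$ is the other endpoint of $\Omega_w$ (nonempty here). By Remark~\ref{rem:afterhzeroreductiob} and the reformulation \eqref{eq:polynomialboundforD}, the required residual estimate for elements $f=(w_1,w_2,0)\in\dom{\AS}$ is exactly the estimate for $D_M$ on $(\Omega_w,\gamma)$ with $\trg(w_1)=0$. Note that $h_1=0$ forces $\trg(w_1)=0$ and $\trg_N(w_2)=0$, so $(w_1,w_2)\in\dom{D_M}$ and $\|(\lambda i+\AS)f\|_{\bbH_1}=\|(\lambda i+D_M)(w_1,w_2)\|_2$. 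Lemma~\ref{lem:1dresidualbound} then gives
\[
\|(\lambda i+\AS)f\|_{\bbH_1}\ge \|f\|_{\bbH_1}
\]
for all $\lambda\in\R$. This is the hypothesis of Proposition~\ref{prop:reductiontohzero} with $p\equiv 1$, and $p(\lambda)\lesssim|\lambda|$ holds trivially for $|\lambda|\gg1$.

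Proposition~\ref{prop:reductiontohzero} therefore yields, for all $f\in\dom{\AS}$,
\[
\|(\lambda i+\AS)f\|_{\bbH_1}\gtrsim |\lambda|^{-2}\|f\|_{\bbH_1}, \qquad |\lambda|\gg1 .
\]
By Theorem~\ref{thm:mildsolutions} and Proposition~\ref{prop:kernelandspectrum}, $\TS$ is a contraction semigroup. Moreover, $\sigma(\AS)\cap i\R=\varnothing$: nonzero imaginary points are excluded because $\sigma(\AS)\cap i\R\subset\{0\}$, and $0$ is excluded since we are on $\ker(A_S)^\perp$.

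To justify $0\notin\sigma(\AS)$ fully, I would argue that $0$ is not an eigenvalue of $\AS$. Since $\AS$ is dissipative, the relevant part of the spectrum is the approximate point spectrum. To conclude $0\in\rho(\AS)$, I would use the compactness argument of Lemma~\ref{lem:Aextcompactresolvent} transported through the closure relation framework. This is the same input the paper already relies on for Theorem~\ref{thm:abstractresolventbounds}.

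The lower residual bound controls the resolvent norm, $\|(\lambda i+\AS)^{-1}\|\lesssim|\lambda|^2$ for $|\lambda|\gg1$. Since $\lambda i$ lies in the resolvent set, the lower bound is equivalent to this resolvent estimate. Theorem~\ref{thm:abstractresolventbounds}.(ii) with $d=2$ then gives decay $(1+t)^{-k/2}$ for each $k$, which is the claim.

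The final sentence of the corollary, on optimality, is quoted from \cite{1dimoptimalrate} for $S\equiv1$, so nothing needs proving there.

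The only mildly delicate step is verifying the identification of $\{f\in\dom{\AS}: h_1=0\}$ with $\{w\in\dom{D_M}:\trg(w_1)=0\}$ and the equality of norms. This follows directly from characterization \eqref{eq:characterizationofASperp}.
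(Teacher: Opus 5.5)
Your proposal is correct and matches the paper's argument: Lemma~\ref{lem:1dresidualbound} gives the $h_1=0$ residual bound with $p\equiv 1$. Proposition~\ref{prop:reductiontohzero} upgrades it to $\|(\lambda i+\AS)f\|_{\bbH_1}\gtrsim|\lambda|^{-2}\|f\|_{\bbH_1}$, and Theorem~\ref{thm:abstractresolventbounds}.(ii) with $d=2$ then yields the rate, while the optimality clause is only quoted.

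Your caveat that $0\in\rho(\AS)$ needs more than $0\notin\sigma_p(\AS)$ is fair, since the paper simply takes it from Proposition~\ref{prop:kernelandspectrum}. In one dimension it is immediate without any closure-relation argument: the graph norm of $\AS$ bounds all three components in $H^1$ (use Lemma~\ref{lem:A21h} at $\lambda=0$ for $h_1$), so $\AS$ has compact resolvent and injectivity suffices.
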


\subsection{Geometric Control Condition}
\label{sec:GCC}

We next introduce an additional geometric assumption, which was shown to be closely related to the decay of wave-heat systems by Rauch, Zhang, Zuazua in \cite{zuazuapolynomial, zuazualongtime}.

\begin{definition}[Control in time]\label{def:GCC}
    Let $(\Omega,\Omega_w,\Omega_h)$ be a Lipschitz interface triple in $\R^n$ with interface $\gamma$. We say that $\Omega_h$ controls $\Omega_w$ in time if there exists a time $\tau>0$ such every wave equation solution $W$ with Dirichlet boundary conditions, i.e., every $$W \in C^0\left(\R_{\geq 0}, H^\Delta_0(\Omega)\right)\cap C^1\left(\R_{\geq 0}, H^1_0(\Omega)\right)\cap C^2\left(\R_{\geq 0}, L^2(\Omega)\right)$$ with
    \begin{align*}
        \partial_t^2 W(t,x) = \Delta W(t,x)~~ &\forall(t,x) \in (0,\infty)\times \Omega,\\
        W(0,\cdot) = w_0 \in H^\Delta_0(\Omega),~~ &\partial_t W(0,\cdot) = w_1 \in H^1_0(\Omega),
    \end{align*}
    satisfies the observability inequality 
    \begin{align}
        \|w_0\|_{H^1(\Omega)}^2 + \|w_1\|_{L^2(\Omega)}^2 \lesssim \int_0^\tau  \|\partial_t W(t,\cdot)\|^2_{L^2(\Omega_h)} \dx t. \label{eq:observintime}
    \end{align}
    
    Similarly, for admissible Lipschitz pairs $(\Omega_w,\gamma)$, we say that $\gamma \subset \dO_w$ controls $\Omega_w$ if there exists a Lipschitz interface triple $(\Omega,\Omega_w,\Omega_h)$ with interface $\gamma$ such that $\Omega_h$ controls $\Omega_w$ in time.
\end{definition}

The observability inequality~\eqref{eq:observintime} was shown to hold by Bardos, Lebeau, and Rauch  in \cite{rauchGCC} if $\Omega_w$ is smooth enough and $\Omega_h$ satisfies the Geometric Control Condition (GCC) in $\Omega$, i.e., every generalized geodesic in $\Omega$ traveling at unit speed, and reflecting off $\dO_w$ according to the laws of geometric optics, enters $\Omega_h$ in uniformly bounded time $\tau>0.$ See also Figure \ref{fig:GCC}.

We refer to \cite{DuyckaertsOptimal,zuazualongtime,zuazuapolynomial} for more detail and to \cite{BurqGCC} for weaker regularity assumptions on $\dO$.

\begin{figure}[htbp]
    \centering
\begin{tikzpicture}[thick]
\begin{scope}[scale=1.4,shift={(0,0)}] 

\draw[densely dashed] (0,0) -- (0,1);

\draw (-1.25,0)--(0,0);
\draw (-1.25,1)--(0,1);
\draw (-1.25,1)--(-1.25,0);

\draw
(0,1)
.. controls (0,1.7) and (2,1.7) .. (2,0.5)
.. controls (2,-0.3) and (0,-0.5) .. (0,0);

\node at (-0.6,0.5) {$ \Omega_h$};
\node at (0.13,0.25) {$\gamma$};
\node at (1,0.55) {$\Omega_w$};
\node at (2,0) {$\Gamma_w$};
\node at (-1,1.15) {$\Gamma_h$};

\end{scope}
\begin{scope}[scale = 0.6, shift={(11,1)}]

\draw
(-4,0)
.. controls (-4,2) and (-3,3) .. (-1.5,3)
.. controls (-0.7,3) and (-0.5,1.8) .. (-0.6,0.5)
.. controls (-0.6,-0.2) and (0.6,-0.2) .. (0.6,0.5)
.. controls (0.5,1.8) and (0.7,3) .. (1.5,3)
.. controls (2.2,3) and (3,2) .. (3,0)
.. controls (3,-3) and (-4,-3) .. (-4,0)
-- cycle;

\draw[densely dashed]
(-2.5,-0.5)
.. controls (-2.5,1.5) and (-2.2,2.5) .. (-1.5,2.5)
.. controls (-1,2.5) and (-1,1.4) .. (-1,0.6)
.. controls (-1,-0.8) and (1,-0.8) .. (1,0.6)
.. controls (1,1.4) and (1,2.5) .. (1.5,2.5)
.. controls (2,2.5) and (2.5,1.5) .. (2.5,0.5)
.. controls (2.5,-2.2) and (-2.5,-2.2) .. (-2.5,-0.5)
-- cycle;

\node at (-3.2,0) {$\Omega_w$};
\node at (-1.6,0) {$\Omega_h$};
\node at (1.45,2) {$\gamma$};
\node at (0.6,3) {$\Gamma_w$};
\end{scope}
\end{tikzpicture}
\caption{Two Lipschitz interface triples, only the right one satisfies the Geometric Control Condition, even though $\Omega_h$ is surrounded by $\Omega_w$, i.e., $\Gamma_h = \varnothing$.}
    \label{fig:GCC}
\end{figure}

\begin{remark}\label{rem:lions}
A common case in which $\Omega_h$ controls $\Omega_w$ in time is when  Lion's multiplier condition \cite{lionsmutlipliercondition} is fulfilled, which, for smooth enough $\Gamma_w,$ states that there exists an $x_0 \in \R^n$ such that $$ (x-x_0)\cdot \nu_w(x) \leq 0 ~~\forall x\in \Gamma_w,$$
see also Figure~\ref{fig:lionsmultiplercond}.
Regarding estimates on the resulting observability time $\tau$, we refer to Remark 3.4 in \cite{lionsmutlipliercondition}.

We note that in the case $\Gamma_w = \varnothing,$ i.e., in the case where $\Omega_h$ completely surrounds $\Omega_w$, Lion's multiplier condition and hence the observability estimate $\eqref{eq:observintime}$ is automatically satisfied.
\end{remark}
\begin{figure}[htbp]
    \centering
\begin{tikzpicture}[thick]
\begin{scope}[scale=1,shift={(0,0)}] 
\draw[densely dashed]
(-1.5,2)
.. controls (-0.7,2.5) and (-0.5,1.8) .. (-0.6,0.5)
.. controls (-0.6,-0.2) and (0.6,-0.2) .. (0.6,0.5)
.. controls (0.6,1.2) and (1.5,1.2) .. (1.5,0.5)
.. controls (1.5, -0.8) and (-0.5,-1.3) .. (-1.5,-1);
\draw
(-1.5,2)
.. controls (-2.2,1.5) and (-2.5,-0.7) .. (-1.5,-1);

\draw[->, shorten >= 2mm] (-2.14,0.2) -- (-2.8,0.2);
\node at (-2.75,0.4) {$ \nu_w$};

\node at (-1.8,0.7) {$ \Gamma_w$};

\node at (-0.9,-0.4) {$ \Omega_w$};
\node at (-0.4, 0.5) {$ \gamma$};
\node (A) at (-5,-0.1) {};
\draw[loosely dotted]
(A)-- (-1.5,2);
\draw[loosely dotted]
(A)-- (-1.5,-1);
\node at (-5.3,-0.1) {$x_0$};
\fill (A) circle (2pt);
\end{scope}
\end{tikzpicture}
    \label{fig:placeholder}
    
\caption{$\Gamma_w$ satisfies Lion's multipler condition.}
    \label{fig:lionsmultiplercond}
\end{figure}
In the following, we show that the observability estimate~\eqref{eq:observintime} and hence the Geometric Control Condition imply a relatively fast polynomial decay rate  for the semigroup $\TS$, i.e., there is some $d>0$ such that
\begin{align*}
        \forall k \in \N ~ \exists C_k>0: ~ \left\|\TS(t)x\right\|_{\bbH_1} \leq \frac{C_k}{(1+t)^{\frac{k}{d}}} \|x\|_{(\AS)^k} ~~ \forall t\geq0,  x\in \dom{(\AS)^k}.
\end{align*}
This was first observed in the case $S\equiv 1$ by Zhang, Zuazua in \cite{zuazualongtime} with $1/d=1/6$, and then later improved by Duyckearts \cite{DuyckaertsOptimal} to any rate $1/d <1.$
Our methods yield a slightly slower rate of $1/p = 1/2$ but allow for non-trivial, accretive heat coefficients $S\in \domS$.

The proof of the following result roughly follows a Hautus-test-type argument, where we deduce a residual bound (see \eqref{eq:residualonOmegaDD}) from an observability in time estimate (see \eqref{eq:hautustestsimilar}). The observability in time estimate is inferred via an auxiliary wave equation.
First, however, we transform the claimed residual estimate involving $D_M$ on $\Omega_w$ into a residual estimate involving $D_D$ (defined in \eqref{eq:defofDD}) on $\Omega$. 
\begin{proposition} \label{prop:GCCimpliesD}
    Let $(\Omega_w,\gamma)$ be an admissible Lipschitz pair in $\R^n$ with $\gamma$ controlling $\Omega_w$ in time.
    Then the residual bound 
    \begin{align*}
    \|(\lambda i+D_M)w\|_2 \gtrsim \|w\|_2, ~~|\lambda|\gg1,\lambda\in \R,
\end{align*} holds for all $w=(w_1,w_2)  \in \dom{D_M}$ with $\trg(w_1)= 0.$
\end{proposition}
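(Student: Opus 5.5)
Let $w=(w_1,w_2)\in\dom{D_M}$ with $\trg(w_1)=0$, and set $f=(f_1,f_2)\coloneq(\lambda i+D_M)w$. The plan is to transfer the problem to the Dirichlet wave operator $D_D$ on the whole domain $\Omega$ and then run a Hautus-type argument there.

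\textbf{Step 1 (extension by zero).} Fix a Lipschitz interface triple $(\Omega,\Omega_w,\Omega_h)$ for which $\Omega_h$ controls $\Omega_w$ in time. Extend $w$ by zero to $\Omega$ and call the result $x=(x_1,x_2)$. Since $\trg(w_1)=0$ and $\trg_N(w_2)=0$, Proposition~\ref{prop:patching} gives $x_1\in H^1(\Omega)$ and $x_2\in H^\di(\Omega)$. Moreover $\tr^{\Gamma_w}(w_1)=0$, so $x_1\in H^1_0(\Omega)$, and hence $x\in\dom{D_D}$.

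\textbf{Step 2 (projection onto the range).} In general $x_2\notin\gr H^1_0(\Omega)$, so, exactly as in the proof of Proposition~\ref{prop:interfacextension}, I split $x_2=\gr q+\bbP x_2$ using the Helmholtz decomposition $L^2(\Omega)^n=\gr H^1_0(\Omega)\oplus\{\di v=0\}$. Set $y\coloneq(x_1,\gr q)$. Then $y\in\dom{D_D}\cap\overline{\Ima(D_D)}$ and $\di\gr q=\di x_2$. Orthogonality gives
\[
\|(\lambda i+D_D)y\|^2+\|\lambda\,\bbP x_2\|^2=\|(\lambda i+D_D)x\|^2=\|f\|^2 .
\]
Because $\lambda\bbP x_2=\bbP f_2$, we get $\|\bbP x_2\|\le|\lambda|^{-1}\|f\|$. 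It therefore suffices to show
\begin{equation}\label{eq:residualonOmegaDD}
\|y\|\lesssim\|(\lambda i+D_D)y\|+\|y\|_{L^2(\Omega_h)},
\end{equation}
since $y$ vanishes on $\Omega_h$ up to the component $\gr q\restrict{\Omega_h}=-\bbP x_2\restrict{\Omega_h}$, whose norm is $O(|\lambda|^{-1}\|f\|)$. In fact only the first component is observed, and $x_1=0$ on $\Omega_h$.

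\textbf{Step 3 (Hautus test from observability).} This is the main obstacle. I will prove~\eqref{eq:residualonOmegaDD} with only the first component on $\Omega_h$ on the right, i.e.\ with $\|y_1\|_{L^2(\Omega_h)}$, via Duhamel's formula. Let $U(t)=e^{tD_D}$, which is unitary on $\overline{\Ima(D_D)}$, and put $g\coloneq(\lambda i+D_D)y$. Then
\[
U(t)y=e^{-i\lambda t}y+\int_0^t e^{-i\lambda(t-s)}U(s)g\dx s,
\]
so $\|U(t)y-e^{-i\lambda t}y\|\le\tau\|g\|$ for $t\in[0,\tau]$.

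The state $U(t)y$ is precisely the pair $(W_t,\gr W)$ of a Dirichlet wave solution in energy form, and~\eqref{eq:observintime}, written in these first-order variables and extended by density to finite-energy data, gives
\[
\|y\|^2\lesssim\int_0^\tau\|(U(t)y)_1\|^2_{L^2(\Omega_h)}\dx t\lesssim\tau\|y_1\|^2_{L^2(\Omega_h)}+\tau^3\|g\|^2 .
\]
Two technical points remain: matching $\|w_0\|_{H^1}$ with $\|\gr W(0)\|_{L^2}$ via Poincar\'e, and checking that the observability class of solutions in Definition~\ref{def:GCC} covers our $y$ by density. Once these are settled, $y_1=x_1=0$ on $\Omega_h$, so $\|y\|\lesssim\|g\|\le\|f\|$.

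\textbf{Step 4 (conclusion).} Finally,
\[
\|w\|\le\|y\|+\|\bbP x_2\|\lesssim\|f\|+|\lambda|^{-1}\|f\|,
\]
which is the claimed bound $\|(\lambda i+D_M)w\|_2\gtrsim\|w\|_2$ for $|\lambda|\gg1$.
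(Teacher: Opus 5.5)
Your proposal is correct and follows essentially the paper's proof: extension by zero to $\Omega$, a Helmholtz splitting whose divergence-free part costs only $|\lambda|^{-1}\|f\|$, observability applied to the gradient part through its potential, and a Duhamel comparison with the stationary ansatz with error at most $t\|(\lambda i+D_D)y\|$. You apply that comparison directly to $y$ rather than to the full extension, which is slightly more direct, and your sign $e^{-i\lambda t}$ is the correct one. The two technical points you flag are immediate: $\|\gr q\|\le\|q\|_{H^1(\Omega)}$ needs no Poincar\'e inequality, and since $y\in\dom{D_D}$ gives $q\in H^\Delta_0(\Omega)$, the function $W(t)=q+\int_0^t (U(s)y)_1\dx s$ already lies in the class of Definition~\ref{def:GCC}, so no density argument is needed.
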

\begin{proof}
    
    Firstly, let 
    $(\Omega,\Omega_w,\Omega_h)$ be a Lipschitz interface triple with interface $\gamma$ such that $\Omega_h$ controls $\Omega_w$, and let $D_D$ denote the Dirichlet wave equation generator on $\Omega$, not $\Omega_w$, namely
    \begin{align*}
    D_D=\begin{pmatrix}
        0 & \di \\ \gr & 0
    \end{pmatrix} :H^1_{0}(\Omega) \times H^\di(\Omega) \subset L^2(\Omega)^{1+n}\to L^2(\Omega)^{1+n}.
    \end{align*} 
    In view of the proof of Lemma~\ref{lem:contractive}, $D_D$ is skew-adjoint.    We note that $D_M$ still acts on $L^2(\Omega_w)$, not $L^2(\Omega)$.
    
    Let $w = (w_1,w_2)  \in \dom{D_M}$ with $\trg(w_1) = 0$ be arbitrary, the extension of $w$ by zero to $\Omega$, which we denote by $w' = (w'_1,w'_2) $, is then an element of $\dom{D_D},$ see Proposition~\ref{prop:patching}.
    Due to 
    $$\|w'\|_{L^2(\Omega)} = \|w\|_{L^2(\Omega_w)},~~\|(\lambda i +D_D)w'\|_{L^2(\Omega)} = \|(\lambda i +D_M)w\|_{L^2(\Omega_w)},$$ it suffices to show 
    \begin{align}\label{eq:residualonOmegaDD}
        \|(\lambda i +D_D)v\|_{L^2(\Omega)} \gtrsim \|v\|_{L^2(\Omega)}, ~~ |\lambda|\bb 1, \lambda\in \R
    \end{align}
    for all $v \in \dom{D_D}$ with $v\restrict{\Omega_h} = 0.$

    Secondly, 
    for $v \in \dom{D_D}$ with $v\restrict{\Omega_h} = 0$, we define
    $$w_\gr  \coloneq  \begin{pmatrix} v_1 \\ (1-\bbP_{\gamma})v_2\end{pmatrix}, ~~ w_0  \coloneq  \begin{pmatrix}
        0 \\ \bbP_{\gamma} v_2
    \end{pmatrix},$$
    where $\bbP$ denotes the Helmholtz projection in $L^2(\Omega)$ with kernel $\gr H^1_{0}(\Omega)$ and range $\{v \in H^\di(\Omega)~|~ \di v = 0\}$,
    see Theorem 4.2 in \cite{hodgeonforms}.
    
    As in the proof of Proposition~\ref{prop:interfacextension}, this ensures $w_\gr \in H^1_0(\Omega)\times \gr H^\Delta_0(\Omega),$ in particular, there exists a potential $p\in H^\Delta_0(\Omega)$ with $\gr p = (w_\gr)_2.$
    
    Let $T_D: [0,\infty) \times L^2(\Omega)^{1+n}\to L^2(\Omega)^{1+n}$ be the unitary semigroup generated by $D_D.$
    With this, we find that 
    $$W(t,x)  \coloneq  p(x)+\int_0^t (T_D(s)w_\gr )_1(x) \dx s$$ is an element of 
    $$C^0\left(\R_{\geq 0}, H^\Delta_0(\Omega)\right)\cap C^1\left(\R_{\geq 0}, H^1_0(\Omega)\right)\cap C^2\left(\R_{\geq 0}, L^2(\Omega)\right)$$ 
    and a solution to the wave equation
     \begin{align*}
        \partial_t^2 W(t,x) = \Delta W(t,x)~~ &\forall(t,x) \in (0,\infty)\times \Omega,\\
        W(0,\cdot) = p,~~& \partial_t W(0,\cdot) = v_1.
    \end{align*}
    
    Indeed,  we observe
    \begin{align*}
        \partial_t W(t,\cdot) = (T_D(t)w_\gr)_1 \in H^1_0(\Omega)
    \end{align*}
    since $T_D$ maps $\dom{D_D}$ into $\dom{D_D}.$
    Moreover, \begin{align*}
        \gr W(t,\cdot) &= \gr p +\int_0^t  \gr (T_D(s)w_\gr)_1 \dx s
        = (w_\gr)_2 +\int_0^t (D_D T_D(s)w_\gr)_2 \dx s
        \\&= (w_\gr)_2 +\left(\int_0^t \partial_t T_D(s)w_\gr \dx s\right)_2
        = \left( T_D(t)w_\gr \right)_2,
    \end{align*}
    which implies
    \begin{align*}
        \partial_t^2 W(t,\cdot) = \partial_t (T_D(t) w_\gr)_1 = \di ( T_D(t) w_\gr )_2 = \Delta W(t,\cdot) \in L^2(\Omega).
    \end{align*}
    This also shows $$W \in C^0\left(\R_{\geq 0}, H^\Delta_0(\Omega)\right)\cap C^1\left(\R_{\geq 0}, H^1_0(\Omega)\right)\cap C^2\left(\R_{\geq 0}, L^2(\Omega)\right).$$ 
    The initial data $W(0,\cdot) = p,~\partial_t W(0,\cdot) = v_1$ is clear. 

    Applying the assumption that $\Omega_h$ controls $\Omega_w$ in time to $W$, we find
    \begin{align}
        \|w_\gr\|^2_{L^2(\Omega)}&=\|(w_\gr)_2\|_{L^2(\Omega)}^2 + \|(w_\gr)_1\|_{L^2(\Omega)}^2 \leq \|p\|_{H^1(\Omega)}^2 + \|w_1'\|_{L^2(\Omega)}^2 \nonumber
        \\&\lesssim \int_0^\tau  \|\partial_t W(t,\cdot)\|^2_{L^2(\Omega_h)} \dx t
        \lesssim \int_0^\tau  \|(T_D(t) w_\gr)_1\|^2_{L^2(\Omega_h)} \dx t. \label{eq:hautustestsimilar}
    \end{align} 
    Re-adding the orthogonal term $w_0$ to $w_\gr$, we infer
    \begin{align*}
        \|v\|_{L^2(\Omega)}^2 &= \|w_\gr\|^2_{L^2(\Omega)} +\|w_0\|_{L^2(\Omega)}^2 
        \\&  \lesssim \int_0^\tau  \|(T_D(t) w_\gr)_1\|^2_{L^2(\Omega_h)} \dx t+\|w_0\|_{L^2(\Omega)}^2
        \\&  \leq \int_0^\tau  \|T_D(t) w_\gr\|^2_{L^2(\Omega_h)} \dx t+\|w_0\|_{L^2(\Omega)}^2
        \\&  \leq \int_0^\tau  \|T_D(t) v\|^2_{L^2(\Omega_h)}+\|T_D(t) w_0\|^2_{L^2(\Omega_h)} \dx t+\|w_0\|_{L^2(\Omega)}^2
        \\&  \lesssim \int_0^\tau  \|T_D(t) v\|^2_{L^2(\Omega_h)} \dx t+\|w_0\|_{L^2(\Omega)}^2.
    \end{align*} 
    If we now temporarily assume the estimate 
    \begin{align} \label{eq:temp}
        \int_0^\tau  \|T_D(t) v\|^2_{L^2(\Omega_h)} \dx t\lesssim \|(\lambda i +D_D)v\|^2_2,~~ |\lambda|\bb 1, \lambda\in \R,
    \end{align} then 
     \begin{align*}
        \|v\|_{L^2(\Omega)}^2 &\lesssim \|(\lambda i +D_D)v\|_2^2+\|w_0\|_{L^2(\Omega)}^2 
        \\&= \|(\lambda i +D_D)w_\gr \|_2^2+(1+|\lambda|)\|w_0\|_{L^2(\Omega)}^2 
        \\&\lesssim \|(\lambda i +D_D)v \|_2^2,
    \end{align*} which we previously observed was enough to conclude.
    Therefore, it remains to demonstrate estimate~\eqref{eq:temp} for all $v \in \dom{D_D}$ with $v\restrict{\Omega_h}=0.$ 

    Let $r$ denote the residual $(\lambda i +D_D)v$ and let $S(t)v \coloneq  \exp(i\lambda t)v$ denote the stationary wave ansatz, which is identically zero on $\Omega_h$. The error $E(t)  \coloneq  (T_D(t)-S(t))v$ then satisfies 
    \begin{align*}
        \partial_t E(t) &=  -i \lambda S(t)v+ D_D T_D(t)v\\&=  D_D S(t)v+ D T_D(t)v-(i\lambda +D_D)S(t)v 
        \\&= D_D E(t) - \exp(i\lambda t)r.
    \end{align*}
    To estimate the size of the error $E(t)$, we observe $E(0) = 0$ and
    \begin{align*}
       2\|E(t)\|_{L^2(\Omega)}\partial_t\|E(t)\|_{L^2(\Omega)}
        &=\partial_t\|E(t)\|_{L^2(\Omega)}^2 
        \\&= 2 \re \sca{\partial_t E(t)}{E(t)}_{L^2(\Omega)} 
        \\&=2 \re \sca{D E(t)-\exp(i\lambda t)r}{E(t)}_{L^2(\Omega)}
        \\& \lesssim\|r\|_{L^2(\Omega)}\|E(t)\|_{L^2(\Omega)} ,
    \end{align*} where the last step follows from the skew-adjointness of $D.$
    Integrating this yields $\|E(t)\|_{L^2(\Omega)} \lesssim t\|r\|_{L^2(\Omega)}$.
    
    Lastly, we find
    \begin{align*} 
        \int_0^\tau  \|T_D(t) v\|^2_{L^2(\Omega_h)} \dx t &= \int_0^\tau  \|E(t)\|^2_{L^2(\Omega_h)} \dx t
        \lesssim \int_0^\tau  t^2 \|r\|^2_{L^2(\Omega)} \dx t
        \\&\approx \tau^3\|r\|^2_{L^2(\Omega)}
        = \tau^3\|(\lambda i +D_D)v\|^2_{L^2(\Omega)}.
    \end{align*} 
    As $\tau$ depends only on the domain and can hence be considered constant,
 we have indeed demonstrated estimate~\eqref{eq:temp} and 
    thereby completed the proof.\qedhere
    
\end{proof}

Keeping in mind Proposition~\ref{prop:reductiontohzero} and Theorem~\ref{thm:abstractresolventbounds}.(ii), Proposition~\ref{prop:GCCimpliesD} directly implies the sought after polynomial decay of the wave-heat semigroup $\TS$.
\begin{corollary}\label{cor:GGCpolynomrate}
    Let $(\Omega,\Omega_w,\Omega_h)$ be a Lipschitz interface triple in $\R^n$ with $\Omega_h$ controlling $\Omega_w$ in time, and let $S \in \domS$ be accretive.
    Then the semigroup $\TS$ of the wave-heat system~\eqref{eq:1} 
    possesses polynomial decay with rate $1/d = 1/2$, i.e., 
\begin{align*}
        \forall k \in \N ~ \exists C_k>0: ~ \left\|\TS(t)x\right\|_{\bbH_1} \leq \frac{C_k}{(1+t)^{\frac{k}{2}}} \|x\|_{(\AS )^k} ~~ \forall t\geq0,  x\in \dom{(\AS )^k}.
\end{align*}
\end{corollary}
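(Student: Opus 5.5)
The plan is to chain the earlier results: Proposition~\ref{prop:GCCimpliesD} gives a residual bound with $p\equiv 1$ on the wave domain, Proposition~\ref{prop:reductiontohzero} lifts it to all of $\dom{\AS}$ with a loss of $|\lambda|^{-2}$, and Theorem~\ref{thm:abstractresolventbounds}.(ii) turns that into the decay rate. Write $\gamma$ for the interface of the given triple $(\Omega,\Omega_w,\Omega_h)$.

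First, I note that $(\Omega_w,\gamma)$ is an admissible Lipschitz pair, with the given triple as witness. Since $\Omega_h$ controls $\Omega_w$ in time, $\gamma$ controls $\Omega_w$ in the sense of Definition~\ref{def:GCC}. Proposition~\ref{prop:GCCimpliesD} then yields
\[
\|(\lambda i+D_M)w\|_2\gtrsim \|w\|_2,\qquad |\lambda|\gg1,
\]
for all $w\in\dom{D_M}$ with $\trg(w_1)=0$.

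Second, by the characterization~\eqref{eq:characterizationofASperp} and Remark~\ref{rem:afterhzeroreductiob}, elements $f=(w_1,w_2,0)\in\dom{\AS}$ correspond exactly to such $w$. Indeed, $h_1=0$ forces $\trg(w_1)=0$ and $\trg_N(w_2)=0$. Moreover, $\|(\lambda i+\AS)f\|_{\bbH_1}=\|(\lambda i+D_M)w\|_2$ and $\|f\|_{\bbH_1}=\|w\|_2$. So the hypothesis of Proposition~\ref{prop:reductiontohzero} holds with $p\equiv1$, and $p(\lambda)\lesssim|\lambda|$ is trivially satisfied for $|\lambda|\ge1$. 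That proposition, valid for every accretive $S\in\domS$, gives
\[
\|(\lambda i+\AS)f\|_{\bbH_1}\gtrsim |\lambda|^{-2}\|f\|_{\bbH_1}\quad\text{for all } f\in\dom{\AS},\ |\lambda|\gg1.
\]

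Third, this lower bound gives $\|(\lambda i+\AS)^{-1}\|\lesssim|\lambda|^2$ for $|\lambda|\gg1$. Here I use that $i\R\subset\rho(\AS)$: by Proposition~\ref{prop:kernelandspectrum}, $\sigma(\AS)\cap i\R\subset\{0\}$. The point $0$ lies in the resolvent set as well, because $\AS$ has compact resolvent and $0$ is not an eigenvalue. If one prefers to avoid the compactness argument, Theorem~\ref{thm:abstractresolventbounds} only needs the resolvent bound for large $|\lambda|$ together with $\sigma(\AS)\cap i\R=\varnothing$. The semigroup $\TS$ is contractive by Theorem~\ref{thm:mildsolutions}. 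Theorem~\ref{thm:abstractresolventbounds}.(ii) with $d=2$ then gives the claimed decay $(1+t)^{-k/2}$.

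The main obstacle is verifying $0\in\rho(\AS)$, which Theorem~\ref{thm:abstractresolventbounds} formally requires. My first attempt is the following. Injectivity of $\AS$ holds since $\ker\AS=\{0\}$ on $\ker(A_S)^\perp$. For surjectivity, I would use compactness of the resolvent of $\AS$: Rellich gives compactness in the $w_1$ and $h_1$ components, and for the $w_2$ component I would argue via potentials as in Lemma~\ref{lem:Aextcompactresolvent}. Failing that, I would use that the residual bound $\|\AS f\|\gtrsim\|f\|$ near $\lambda=0$ follows by the same estimates used for $\lambda$ in a compact set.
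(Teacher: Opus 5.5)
Your chain is exactly the paper's proof, which is the single sentence preceding the corollary: Proposition~\ref{prop:GCCimpliesD} with $p\equiv 1$, then Proposition~\ref{prop:reductiontohzero} giving $\|(\lambda i+\AS)f\|_{\bbH_1}\gtrsim|\lambda|^{-2}\|f\|_{\bbH_1}$, then Theorem~\ref{thm:abstractresolventbounds}.(ii) with $d=2$. The paper treats the hypothesis $\sigma(\AS)\cap i\R=\varnothing$ as settled by Proposition~\ref{prop:kernelandspectrum} and the remark that $0$ is not an eigenvalue of $\AS$. You are right that this alone does not give $0\in\rho(\AS)$. The point cannot be dropped: the claimed $k=1$ estimate itself forces $i\R\subset\rho(\AS)$, since an approximate eigenvector at a point of $\sigma(\AS)\cap i\R$ would violate it.

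The problem is your argument for it: $\AS$ does \emph{not} have compact resolvent when $n\geq 2$. The potential trick of Lemma~\ref{lem:Aextcompactresolvent} works only because of the Dirichlet condition on all of $\partial\Omega$. For $w_2=\gr p$ on $\Omega_w$, the boundary term $\langle\trg_N(w_2),\trg(p)\rangle$ on $\gamma$ survives and is not compact. Concretely:
\begin{itemize}
\item Let $(\phi_k)$ be orthonormal in $\hehz(\gamma)$, let $h^k\in H^{\di S\gr}_{\Gamma_h}(\Omega_h)$ solve $\di S\gr h^k=0$ with trace $\phi_k$ on $\gamma$, and let $u_1^k,u_2^k$ be the extensions from Lemma~\ref{lem:smallextensionoperator}.
\item Then $(u_1^k,u_2^k,h^k)$ is bounded in the graph norm of $\AS$ and $h^k\rightharpoonup 0$ in $H^1(\Omega_h)$. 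Hence $\trg_N(u_2^k)=-\trg_N(S\gr h^k)\rightharpoonup 0$ in $\hnehz(\gamma)$.
\item But $\re\langle\trg_N(S\gr h^k),\trg(\overline{h^k})\rangle=\re\ioh S\gr h^k\cdot\overline{\gr h^k}\dx x\geq\kappa\|\gr h^k\|^2\gtrsim 1$ for large $k$. So these normal traces stay away from $0$ in norm.
\item Since $\di u_2^k$ is bounded and $L^2(\Omega_w)\hookrightarrow H^1(\Omega_w)'$ is compact, an $L^2$-convergent subsequence of $(u_2^k)$ would make the normal traces converge in norm. That contradicts the previous point.
\end{itemize}
Your fallback is not supported either: every residual estimate in the paper (Lemma~\ref{lem:poincare}, Propositions~\ref{prop:reductiontohzero} and~\ref{prop:GCCimpliesD}) requires $|\lambda|\geq 1$.

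What does work is direct solvability. Let $(g_1,\gr q,g_3)\in\ker(A_S)^\perp$ with $q\in H^1_{\Gamma_w}(\Omega_w)$.
\begin{itemize}
\item Set $w_1=q+c$, with $c=0$ if $\Gamma_w\neq\varnothing$.
\item Solve $\di S\gr h_1=g_3$, $h_1=w_1$ on $\gamma$, $h_1=0$ on $\Gamma_h$: lift the boundary data and apply Lax--Milgram on $H^1_0(\Omega_h)$, which is coercive by accretivity.
\item Solve $\Delta\psi=g_1$, $\psi\in H^1_{\Gamma_w}(\Omega_w)$, $\trg_N(\gr\psi)=-\trg_N(S\gr h_1)$ variationally, and put $w_2=\gr\psi$.
\item If $\Gamma_w=\varnothing$, let $\eta$ solve $\di S\gr\eta=0$, $\eta=1$ on $\gamma$, $\eta=0$ on $\Gamma_h$. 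The Neumann compatibility condition is then affine in $c$, with coefficient equal (up to sign) to $\ioh S\gr\eta\cdot\overline{\gr\eta}\dx x$. Its real part is at least $\kappa\|\gr\eta\|^2>0$, so $c$ can be chosen.
\end{itemize}
Thus the closed operator $\AS$ is bijective, and $0\in\rho(\AS)$. With this replacement your proof is complete.
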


We note that this result closely mirrors Theorem 2.3 in \cite{dampedwaveandhautus}, where a generalized version of a wave equation with interior damping was considered.
Furthermore, Corollary~\ref{cor:GGCpolynomrate} can also be used to find a second proof of Corollary~\ref{cor:1dpolynomialdecay}, by demonstrating that every $1$-dimensional domain is controlled in time by each non-empty boundary part.

Lastly, Figure~\ref{fig:implications} presents an informal overview of the different results relating to polynomial decay rates on $\TS$ that were shown in this paper.

\begin{figure}
    \centering
\begin{tikzpicture}[
    node distance=2cm,
    block/.style={ draw,minimum size=1cm, font=\large\bfseries},
    align=center,
    arrow/.style={-{Stealth}, thick},
    doublearrow/.style={<->, {Stealth}-{Stealth}, thick}
    labelstyle/.style={
        draw=none,    
        fill=none,    
        font=\tiny, 
        color=black
    },
]

    \node[block] (E) at (0, -3) {$\|(\lambda i+D_M)w\|_2 \gtrsim |\lambda|^{-p}\|w\|_2$\\ $\forall |\lambda|\gg1, w\in \dom{D_M}$\\ with $\trg(w_1)= 0.$};
    \node[block] (F) at (6, -3) {Polynomial Decay of\\$\TS$ with rate $(1/d)$\\for some accretive\\ $S\in \domS $.};
    \node[block] (G) at (6, -6) {Polynomial Decay of\\$\TS$ with rate $(1/d)$\\for all accretive\\ $S\in \domS $.};
    \node[block] (GCC) at (1.4, 0) {$\gamma$ controls\\$\Omega_w$ in time\\(or $n=1$)};

    \draw[arrow] (E.180) 
        .. controls ++(-0.5,0) and ++(0,0.5) .. ++(-0.5,-1) 
        .. controls ++(0,-0.5) and ++(-1.5,0) .. ++(2,-0.5) 
        node[below = 0pt] at (-1,-4.5) {The same holds on $(\Omega'_w,\gamma'),$\\ if $(\Omega_w,\gamma)$ is similar to a \\Lipschitz interface\\extension of $(\Omega'_w,\gamma')$.}
        .. controls ++(1,0)  and ++(0,-0.5) .. (E.270);

    \draw [arrow, ] (F) to node[below,yshift=0mm] {$p = d$} (E);
    \draw [arrow, ] (G) to (F);
    \draw [arrow, ] (E) to node[below, xshift =-2.5mm, yshift=-1.5mm] {$d =  2p+2$\\\& $d>0$} (G);
    \draw[arrow, loosely dashed] (GCC) to node[right,yshift=4mm,xshift = -3mm] {$d=1/2$, \\$S\equiv 1$} (F);
    \draw[arrow] (GCC) to node[right]{$p=0$} (E.30);

\end{tikzpicture}
    \caption{An informal overview of all the results relating to polynomial decay rates on $\TS$. 
    The dashed arrow was shown in \protect\cite{1dimoptimalrate,DuyckaertsOptimal,zuazua1d}, while the solid arrows correspond to the results established in this paper.}
    \label{fig:implications}
    
\end{figure}

\subsection{Outlook.}

In future work, we aim to find observability-type conditions which are necessary and sufficient for the sought after residual estimate~\eqref{eq:polynomialboundforD} to hold.

By disproving these observability conditions in certain geometric regimes, we are able to find a first Lipschitz interface triple on which $\TS$ does not possess polynomial decay rates. Even further, this allows us to show that the logarithmic decay
rates obtained in Theorem~\ref{thm:logdecay} are optimal, i.e., there are Lipschitz interface triples on which $\TS$ does not have logarithmic decay with any rate $1/d>1$.

Moreover, we use methods from harmonic analysis and lattice geometry to demonstrate these sufficient observability estimates in different geometric regimes, allowing us to find explicit polynomial decay rates on $\TS$ in multiple new cases.
In particular, we aim to generalize the polynomial decay on $T_{1}^\perp$ obtained in \cite{rectangulardomains}, where the Lipschitz interface triple was given by two adjacent, two-dimensional squares.

\appendix

\section{More on traces and extension operators} \label{sec:appendix}

This section is about three additional results related to traces and extension operators, whose proofs were postponed.

First, we present the proof of
Proposition~\ref{prop:patching}, which we repeat here as Proposition~\ref{prop:patchinginappendix} for convenience.

\begin{proposition}\label{prop:patchinginappendix}
    Let $(\Omega,\Omega_w,\Omega_h)$ be a Lipschitz interface triple with interface $\gamma$. Let $\pi\in L^2(\Omega)$ and set $w \coloneq  \pi \restrict{\Omega_w}, h  \coloneq  \pi \restrict{\Omega_h}.$
    \begin{enumerate}
        \item Assume $w\in H^1(\Omega_w)$ and $ h\in H^1(\Omega_h)$. The functions $w,h$ have matching boundary trace, i.e., $$\trg(w) =\trg(h) \tn{ in }\heh(\gamma),$$ if and only if $\pi$ is an element of $ H^1(\Omega)$.
        \item Assume $w\in H^\di(\Omega_w) $ and $ h\in H^\di(\Omega_h)$. The functions $w,h$ have matching normal trace, i.e., $$\trg_N(w) = -\trg_N(h) \tn{ in } \hnehz(\gamma),$$ if and only if $\pi$ is an element of $ H^\di(\Omega)$.
        
        \item Assume $w\in H^\Delta(\Omega_w)$ and $h\in H^\Delta(\Omega_h)$. The functions $w,h$ have matching boundary trace and normal derivative, i.e., $$\qquad \trg(w) =\trg(h)\tn{ in } \heh(\gamma)  \quad \tn{and}\quad \trg_N(\gr w) = -\trg_N(\gr h) \tn{ in } \hnehz(\gamma),$$ if and only if $\pi$ is an element of $ H^\Delta(\Omega)$.
    \end{enumerate}
\end{proposition}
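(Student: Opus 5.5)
The plan is to characterize membership of the patching $\pi$ in $H^1(\Omega)$ and $H^\di(\Omega)$ through distributional derivatives tested against $C_c^\infty(\Omega)$. In each case I compare the distributional gradient or divergence of $\pi$ with the piecewise one, using the Green identity~\eqref{eq:normaltraceidentitybydensity} on $\Omega_w$ and on $\Omega_h$ separately. The interface contributions then appear as a duality pairing on $\gamma$, and they vanish exactly when the traces match. Part (iii) follows by combining (i) and (ii).

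\textbf{Part (ii).} Take $\phi\in C_c^\infty(\Omega)$. Its restrictions lie in $H^1(\Omega_w)$ and $H^1(\Omega_h)$. They vanish near $\Gamma_w$ and $\Gamma_h$, and their traces on $\gamma$ coincide and lie in $\hehz(\gamma)$. The latter holds because $\tr(\phi)$ restricted to $\dO_w$ is Lipschitz with support compactly contained in $\gamma\cup\Gamma_w$ and vanishing on $\Gamma_w$. Applying Remark~\ref{rem:smoothrestrictedtraces}, identity~\eqref{eq:normaltraceidentity}, on both sides gives
\begin{align*}
\io \pi\cdot\gr\phi \dx x + \iow \di(w)\phi\dx x+\ioh \di(h)\phi\dx x = \duality{\trg_N(w)+\trg_N(h)}{\trg(\phi)}.
\end{align*}
If the normal traces match, the right-hand side vanishes. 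Then $\di\pi$ equals the $L^2$ function given by the piecewise divergences, so $\pi\in H^\di(\Omega)$.

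Conversely, if $\pi\in H^\di(\Omega)$, its divergence restricts to $\di w$ and $\di h$. The right-hand side then vanishes for all test functions, and by density it vanishes for all $\phi\in H^1_0(\Omega)$. The main obstacle is to show that $\{\trg(\phi)\mid\phi\in H^1_0(\Omega)\}$ exhausts (or is dense in) $\hehz(\gamma)$.

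\begin{itemize}
\item[\textbullet] Given $g\in\hehz(\gamma)$, extend it by zero to $\tilde g\in\heh(\dO_w)$.
\item[\textbullet] Lift $\tilde g$ with a right inverse of $\tr$ to some $u_w\in H^1(\Omega_w)$ with $\trGw(u_w)=0$.
\item[\textbullet] Similarly obtain $u_h\in H^1(\Omega_h)$. This uses that $g$ extended by zero to $\dO_h$ is in $\heh(\dO_h)$, which holds by the intrinsic characterization of $\hehz(\gamma)$ (Chapter~33 of \cite{tartar2007introduction}).
\item[\textbullet] Glue $u_w$ and $u_h$; this needs part (i) first.
\end{itemize}

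I therefore prove (i) before (ii).

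\textbf{Part (i).} The argument is analogous: for $\Phi\in C_c^\infty(\Omega)^n$, apply~\eqref{eq:normaltraceidentitybydensity} with $f=\Phi$ on each subdomain. This yields
\begin{align*}
\io \pi\,\di\Phi\dx x+\iow \gr w\cdot\Phi\dx x+\ioh\gr h\cdot\Phi\dx x=\duality{\trg_N(\Phi|_{\Omega_w})}{\trg(w)-\trg(h)}.
\end{align*}
In this pairing, $\trg(w)-\trg(h)\in\heh(\gamma)$, and $\Phi\cdot\nu_w$ is a smooth compactly supported function on $\gamma\cup\Gamma_w$, which is in fact Lipschitz. If the traces match, this pairing is zero and $\pi\in H^1(\Omega)$ with piecewise gradient.

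If instead $\pi\in H^1(\Omega)$, I avoid duality altogether. Approximate $\pi$ by $C^\infty(\overline\Omega)$ functions in $H^1(\Omega)$; this is possible since $\Omega$ is Lipschitz. For smooth functions, both one-sided traces equal the restriction to $\gamma$, and continuity of $\trg$ from $H^1(\Omega_w)$ and from $H^1(\Omega_h)$ passes this to the limit. This direction is easy and needs no extension argument.

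\textbf{Part (iii).} This is immediate. Given (i), the condition $\pi\in H^1(\Omega)$ is equivalent to matching traces, and then $\gr\pi$ is the patching of $\gr w$ and $\gr h$. Applying (ii) to this patching gives that $\gr\pi\in H^\di(\Omega)$ if and only if the normal derivatives match.

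I expect the only delicate point to be the density or surjectivity of traces of $H^1_0(\Omega)$ functions onto $\hehz(\gamma)$ in the converse of (ii). If gluing the two lifts proves awkward, I would first try a local Lipschitz-graph flattening near $\gamma$ combined with a reflection.
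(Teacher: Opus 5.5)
Your proposal is correct and follows essentially the same route as the paper: Green's identity on each subdomain tested against $C_c^\infty(\Omega)$ for the forward directions of (i) and (ii), smooth approximation for the converse of (i), a function $\psi\in H^1_0(\Omega)$ with prescribed trace in $\hehz(\gamma)$ (obtained through (i)) for the converse of (ii), and (i) combined with (ii) for (iii). Your lift-and-glue construction, including the remark that $\hehz(\gamma)$ is intrinsic, simply spells out what the paper compresses into ``the existence of $\psi$ follows from Statement (i)''; one small correction is that $\Phi\cdot\nu_w$ is only in $L^\infty(\gamma)$, not smooth or Lipschitz, since $\nu_w$ is merely bounded and measurable on a Lipschitz boundary. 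This is harmless: by $\tr_N(\Phi)=\tr(\Phi)\cdot\nu$ the interface term is the ordinary integral $\int_\gamma(\Phi\cdot\nu_w)(\trg(w)-\trg(h))\dx\sigma$, and that is the correct way to read your pairing, because $\trg(w)-\trg(h)$ lies in $\heh(\gamma)$ but not necessarily in $\hehz(\gamma)$.
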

\begin{proof}

\begin{enumerate}[wide, labelwidth=0pt, labelindent=0pt] 
  
\item   We first note the divergence theorem for Lipschitz domains.
On any bounded Lipschitz domain $\Omega$ with unit outer normal vector $\nu$, one has
\begin{align}
    \io \di(f ) \dx x &=  \int_{\dO}  \tr(f) \cdot \nu \dx \sigma, \label{eq:divtheorems}\\
   \io \partial_{x_j} f \dx x &=  \int_{\dO}  \tr(f) \cdot \nu_{j} \dx \sigma \nonumber
\end{align}
for all  $\Phi \in H^1(\Omega)^n$ and $j\in \{1,\dots,n\}$.  
This follows from identity \eqref{eq:normaltraceidentitybydensity} by substituting $\psi = 1$ and $\tr_N(f) = \tr(f)\cdot \nu$.

Assume now that $w \in H^1(\Omega_w), h \in H^1(\Omega_h)$ have matching trace $\trg(w) = \trg(h)$. We have to show $\pi \in  H^1(\Omega)$.  Both $\pi$ and its hypothetical gradient $$q(x) \coloneq  \left\{
\begin{array}{ll}
(\gr w)(x), & x \in \Omega_w \\
(\gr h)(x), & x \in \Omega_h  \\
\end{array}
\right.$$ are clearly $L^2$-integrable on $\Omega$. Therefore, it suffices to confirm $q = \gr \pi$ in the weak sense, i.e.,  $\io  \pi \partial_{x_j} \varphi + q_j \varphi \dx x = 0$ for all $\varphi\in C^\infty_c(\Omega)$, $1\leq j\leq n$. 

Let $\nu_w,\nu_h$ denote the unit outer normal vectors on $\partial \Omega_w,\partial \Omega_h$.
Keeping in mind $\varphi\restrict{\Gamma_w}=0=\varphi\restrict{\Gamma_h}$ as well as the second identity in \eqref{eq:divtheorems}, we directly verify
\begin{align*}
    \io  \pi \partial_{x_j} \varphi + q_j \varphi \dx x &=
    \iow   \partial_{x_j} (w \varphi) \dx x + \ioh \partial_{x_j} (h \varphi)  \dx x \\&=  \int_{\partial \Omega_w } \tr(w)  \varphi \cdot(\nu_w)_j \dx \sigma +\int_{\partial \Omega_h}  \tr(h) \varphi \cdot(\nu_h)_j  \dx \sigma\\&= \ig\trg( w)  \varphi \cdot(\nu_w)_j -\trg(h)  \varphi \cdot(\nu_w)_j  \dx \sigma=0.
\end{align*}

Conversely, assume $\pi \in H^1(\Omega)$. Let $(\psi_k)_{k\in \bbN} \in C^\infty(\overline{\Omega})$ be an approximating sequence of $\pi$ in $H^1(\Omega)$, which exists by Theorem 3.29 of \cite{McLean}. Restricting this sequence to $\Omega_w,\Omega_h$ then yields an approximating sequence of $w,h$ in $ H^1(\Omega_w), h\in H^1(\Omega_h)$. By the continuity of the trace operator, we have 
\begin{align*}
    \lim_{k\to \infty} \|\trg(w)-\psi_k\restrict{\gamma}\|_{H^{1/2}(\gamma)} &\leq \lim_{k\to \infty} \|\tr(w)-\psi_k\restrict{\partial \Omega_w}\|_{H^{1/2}(\partial \Omega_w)}\\&\lesssim\lim_{k\to \infty} \|w-\psi_k\restrict{ \Omega_w}\|_{H^{1}(\Omega_w)} = 0,
\end{align*} and analogously $\lim_{k\to \infty} \|\trg(h)-\psi_k\restrict{\gamma}\|_{H^{1/2}(\gamma)} = 0$, proving $\trg(w)= \trg(h)$ in $H^{1/2}(\gamma)$.\\

    \item Assume $w\in H^\di(\Omega_w), h\in H^\di(\Omega_h)$, and $\trg_N(w) = - \trg_N(h)$. We have to show $\pi \in H^\di(\Omega)$. Both $\pi$ and its hypothetical divergence $$q(x) \coloneq  \left\{
\begin{array}{ll}
(\di (w))(x), & x \in \Omega_w \\
(\di (h))(x), & x \in \Omega_h  \\
\end{array}
\right.$$ are clearly $L^2$-integrable on $\Omega$. Therefore, it suffices to confirm $\di (\pi) = q$ in the weak sense, i.e., $$\io  q \varphi + \pi \cdot \gr \varphi \dx x= 0 ~~\forall\varphi\in C^\infty_c(\Omega).$$ Using identity~\eqref{eq:normaltraceidentity}, which requires $\varphi\restrict{\Gamma_w}= 0 = \varphi\restrict{\Gamma_h}$, we confirm 
    \begin{align*}
        \io  q \varphi + \pi \cdot \gr \varphi \dx x&=  \iow  \di(w) \varphi + w \cdot \gr \varphi\dx x+\ioh \di(h) \varphi + h \cdot \gr \varphi\dx x \\&= \duality{\trg_N(w)}{\varphi\restrict{\gamma}}+\duality{\trg_N(h)}{\varphi\restrict{\gamma}} = 0.
    \end{align*}
    Conversely, assume $w\in H^\di(\Omega_w), h\in H^\di(\Omega_h)$ and $\pi \in H^\di(\Omega)$. We have to show $\trg_N(w) = - \trg_N(h)$ in $\hnehz(\gamma).$ 
    For this, let $v \in \hehz(\gamma)$ be arbitrary, and let $ \psi \in H^1_0(\Omega)$ be such that $v = \trg(\psi \restrict{\Omega_w})= \trg(\psi \restrict{\Omega_h})$. The existence of $\psi$ follows from Statement (i). We indeed find
    \begin{align*}
       &\hspace{3ex} \duality{\trg_N(w)}{v} + \duality{\trg_N(h)}{v} \\&= \iow  \di(w) \psi+ w \cdot \gr \psi\dx x+\ioh \di(h) \psi+ h \cdot \gr \psi\dx x  \\& = \io \di(\pi \psi) \dx x = \duality{\tr_N(\pi)}{\tr(\psi)}= 0.
    \end{align*}
    
    \item We observe that a function $f$ is an element of $H^\Delta(\Omega)$ if and only if $f$ is an element of $H^1(\Omega)$ and $\gr f$ is an element of $H^\di(\Omega)$. Statement (iii) hence follows by combining Statement (ii) applied to $\gr f$ and Statement (i) applied to $f$.\qedhere
    \end{enumerate}
\end{proof}

Secondly, we show that similarly to the restricted (Dirichlet) trace operator
$\trG: H^1(\Omega)\to \heh(\Gamma),$ the restricted normal trace operator $\trG_N$ is also bounded and surjective.

 \begin{lemma} \label{lem:normaltracesurj} Let $\Gamma$ be a relatively open subset of the boundary $\dO$ of some
bounded Lipschitz domain $\Omega \subset \R^n.$
     The restricted normal trace operator $$\trG_N:H^\di(\Omega)\to \hnehz(\Gamma),$$ defined in Definition \ref{def:restrictedtraceoperators}.(iii), is well-defined, bounded and surjective.
 \end{lemma}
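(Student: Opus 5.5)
Well-definedness and boundedness come straight from the definition; surjectivity is the real content, and I would get it from Hilbert-space duality together with the surjectivity of $\tr_N$ onto $\hneh(\dO)$.

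\emph{Well-definedness and boundedness.} For $g\in\hehz(\Gamma)$, its zero extension $\Tilde g$ lies in $\heh(\dO)$ with $\|\Tilde g\|_{\heh(\dO)}\approx\|g\|_{\hehz(\Gamma)}$, as recorded in Definition~\ref{def:restrictedtraceoperators}.(ii). Hence
\begin{align*}
|\duality{\tr_N(f)}{\Tilde g}| \le \|\tr_N(f)\|_{\hneh(\dO)}\|\Tilde g\|_{\heh(\dO)} \lesssim \|f\|_{H^\di(\Omega)}\|g\|_{\hehz(\Gamma)}.
\end{align*}
So $g\mapsto\duality{\tr_N(f)}{\Tilde g}$ is a bounded antilinear (or linear) functional on $\hehz(\Gamma)$, and $\|\trG_N(f)\|_{\hnehz(\Gamma)}\lesssim\|f\|_{H^\di(\Omega)}$. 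This uses only that $\tr_N$ is bounded.

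\emph{Surjectivity.} The zero-extension map $E_0:\hehz(\Gamma)\to\heh(\dO)$, $g\mapsto\Tilde g$, is bounded, injective, and bounded below. Its range $E_0(\hehz(\Gamma))$ is therefore a closed subspace $V$ of the Hilbert space $\heh(\dO)$.

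Take $\ell\in\hnehz(\Gamma)$. Define $\ell\circ E_0^{-1}$ on $V$; it is bounded because $E_0$ is bounded below. Extend it to all of $\heh(\dO)$ by composing with the orthogonal projection onto $V$; a Hahn--Banach extension would also do. This gives $L\in\hneh(\dO)$ with $L(\Tilde g)=\ell(g)$ for all $g\in\hehz(\Gamma)$.

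By the surjectivity of $\tr_N:H^\di(\Omega)\to\hneh(\dO)$ (Lemma 20.2 in \cite{tartar2007introduction}), there is $f\in H^\di(\Omega)$ with $\tr_N(f)=L$. By construction, $\trG_N(f)(g)=\duality{L}{\Tilde g}=\ell(g)$ for every $g$, i.e.\ $\trG_N(f)=\ell$.

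The main point to handle carefully is that $E_0$ is an isomorphism onto a closed subspace. This rests on the norm equivalence between $\|g\|_{\hehz(\Gamma)}$ and $\|\Tilde g\|_{\heh(\dO)}$ cited from Chapter~33 of \cite{tartar2007introduction}. In the case $n=1$ everything is finite-dimensional and the argument is trivial.
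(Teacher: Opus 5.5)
Your proposal is correct and follows essentially the same route as the paper: boundedness from the norm equivalence $\|g\|_{\hehz(\Gamma)}\approx\|\Tilde g\|_{\heh(\dO)}$ together with boundedness of $\tr_N$, and surjectivity by extending the functional from the zero-extension subspace to all of $\heh(\dO)$ via the orthogonal decomposition and then invoking surjectivity of $\tr_N$. You also make explicit that the range of the zero-extension map is closed, which the paper uses only implicitly when it splits $\heh(\dO)$ into that subspace and its orthogonal complement.
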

 
 \begin{proof}
    We recall that for each element $g$ of $\hehz(\Gamma)$, $\Tilde{g}$ denotes its extension by zero to $\heh(\dO)$, which by Definition \ref{def:restrictedtraceoperators}(ii) exists and satisfies $\|g\|_{\hehz(\Gamma)}\approx \|\Tilde{g}\|_{\heh(\dO)}$.
    
    For the well-definedness and boundedness, we use the boundedness of the non-restricted normal trace operator $\tr_N$ to estimate
     \begin{align*}
         \left\|\trG_N (f)\right\|_{\hnehz(\Gamma)} &= \sup_{\substack{g\in \hehz(\Gamma)\\\|g\|_{\hehz(\Gamma)} = 1}} \left|\duality{\tr_N(f)}{\Tilde{g}}\right|
         \approx    \sup_{\substack{g\in \hehz(\Gamma)\\\|\Tilde{g}\|_{\heh(\dO)} = 1}} \left|\duality{\tr_N(f)}{\Tilde{g}}\right|
         \\ & \leq \sup_{\substack{h \in \heh(\dO)\\\|h\|_{\heh(\dO)} =1}} \left|\duality{\tr_N(f)}{h}\right| = \|\tr_N(f)\|_{\hneh(\dO)} \lesssim \|f\|_{H^\di(\Omega)}.
     \end{align*}
     To infer the surjectivity of $\trG_N$,
     denote by $$\Tilde{H}^{1/2}_0(\Gamma)  \coloneq  \{g\in H^{1/2}(\dO)~|~ g = 0 \tn{ on } \dO \backslash\Gamma\}$$ the image of $\hehz(\Gamma)$ under extension by zero.
     We note that every linear functional $L\in\hnehz(\Gamma)$ can be extended to a linear functional $\Tilde{L} \in \hneh(\dO)$ by setting $$ \duality{\Tilde{L}}{h+h^\perp}  \coloneq  \duality{L}{h\restrict{\Gamma}}, ~~h\in \Tilde{H}^{1/2}_0(\Gamma), h^\perp\in \Tilde{H}^{1/2}_0(\Gamma)^\perp.$$
     By the surjectivity of $\tr_N:H^\di(\Omega)\to \hneh(\dO)$, there then exists an $f\in H^\di(\Omega)$ with $\tr_N(f) = \Tilde{L}$.
     The surjectivity of $\trG_N$ then follows from $\trG_N(f)=L$.
     Indeed, 
     $$\duality{\trG_N(f)}{g}=\duality{\tr_N(f)}{\Tilde{g}} = \duality{\Tilde{L}}{\Tilde{g}} = \duality{L}{g}$$ for all $g\in \hehz(\Gamma)$.
 \end{proof}

The following result was used in Lemma \ref{lem:smallextensionoperator} as part of the larger goal of decomposing $\dom{\AS}$ non-orthogonally into a part that is independent of $\Omega_h$, and a part whose $L^2$-mass is supported mostly on $\Omega_h$, where strong dissipation occurs.

\begin{proposition}[$L^2$-bounded $H^\di$-extension operator]
    \label{prop:hdivextensionop}
    
    Consider a bounded Lipschitz domain $\Omega\subset \R^n$. There then exists an extension operator $$E^\di_{\Omega}: H^\di(\Omega)\to H^\di(\R^n)$$ satisfying 
    $$\left(E^\di_{\Omega} v\right) \restrict{\Omega} = v ~~\tn{and}~~\left\|E^\di_{\Omega} v\right\|_{H^\di(\R^n)} \lesssim \|v\|_{H^\di(\Omega)}.$$ 
    This operator can be assumed to be $L^2$-bounded, i.e., to also satisfy $$\left\|E^\di_{\Omega} v\right\|_{L^2(\R^n)} \lesssim \|v\|_{L^2(\Omega)}.$$
\end{proposition}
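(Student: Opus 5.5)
The plan is to combine a Sobolev extension of $v$ with a divergence correction. Following Stein (or Calderón), there is a universal extension operator $E:W^{k,2}(\Omega)\to W^{k,2}(\R^n)$ that is bounded simultaneously for all $k\ge 0$, in particular for $k=0$. However, it does not preserve $H^\di$, because the divergence of $Ev$ across $\dO$ can have a jump in its normal component. A cleaner route is to use the normal trace. Fix a large ball $B\supset\overline{\Omega}$ and set $\Omega^c_B \coloneq B\backslash\overline{\Omega}$, which is a bounded Lipschitz domain. By Proposition~\ref{prop:patching}.(ii) (applied to the triple $(B,\Omega,\Omega^c_B)$, or directly via the same weak computation), a patching of $v$ with a field $u\in H^\di(\Omega^c_B)$ satisfying $\tr_N^{\dO}(u)=-\tr_N(v)$ on $\dO$ and $\tr_N(u)=0$ on $\partial B$ lies in $H^\di(B)$. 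Extending it by zero outside $B$ then gives an element of $H^\di(\R^n)$.

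The first step constructs $u$ by solving a Neumann problem on $\Omega^c_B$. We seek $\phi\in H^1(\Omega^c_B)$ with $\Delta\phi=c$ in $\Omega^c_B$, $\partial_\nu\phi=-\tr_N(v)$ on $\dO$ and $\partial_\nu\phi=0$ on $\partial B$. The constant $c$ is chosen to satisfy the compatibility condition, namely $c=-\duality{\tr_N(v)}{1}/|\Omega^c_B|=-\io\di v\dx x/|\Omega^c_B|$. Lax--Milgram on $H^1(\Omega^c_B)/\C$ gives $\|\gr\phi\|_{L^2}\lesssim\|\tr_N(v)\|_{\hneh(\dO)}+|c|\lesssim\|v\|_{H^\di(\Omega)}$. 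Setting $u=\gr\phi$ yields $\|u\|_{H^\di}\lesssim\|v\|_{H^\di(\Omega)}$, which proves the first claim.

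The second step, which I expect to be the main obstacle, is the $L^2$-bound $\|u\|_{L^2}\lesssim\|v\|_{L^2(\Omega)}$. This fails for the construction above, since $\|\tr_N(v)\|_{\hneh}$ is not controlled by $\|v\|_{L^2}$. To fix it, I would modify the extension to be linear and nonlocal in a way that only sees $L^2$ data. Let $F$ be the set of $v\in H^\di(\Omega)$, and decompose $v=\gr q+z$ via the Helmholtz decomposition, with $q\in H^1(\Omega)/\C$ solving the Neumann problem $\io\gr q\cdot\gr\psi=\io v\cdot\gr\psi$ for all $\psi\in H^1(\Omega)$. Then $\|\gr q\|_{L^2}\le\|v\|_{L^2}$, $z$ is divergence free with $\tr_N(z)=0$, and $\Delta q=\di v$. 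The piece $z$ is extended by zero, which preserves both the $L^2$ and $H^\di$ norms by Proposition~\ref{prop:patching}.(ii). For $\gr q$, we extend $q$ with the Stein operator $E$, which is bounded $H^1(\Omega)\to H^1(\R^n)$, and multiply by a cutoff $\chi$. This gives $\gr(\chi Eq)\in L^2$ with $\|\cdot\|_{L^2}\lesssim\|q\|_{H^1}\lesssim\|v\|_{L^2}$, after normalizing $q$ to have mean zero and using Poincaré--Wirtinger. The resulting field is not yet in $H^\di$, since $\Delta(\chi Eq)$ need not be in $L^2$. I would correct it on $\Omega^c_B$ by adding $\gr\eta$, where $\eta\in H^1_0(\Omega^c_B)$ solves $\Delta\eta=-\Delta(\chi Eq)$ in $H^{-1}$. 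This gives $\|\gr\eta\|_{L^2}\lesssim\|\gr(\chi Eq)\|_{L^2}$ and makes the divergence vanish on $\Omega^c_B$, but the normal trace of $\gr\eta$ on $\dO$ is then altered.

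The last part, matching the normal traces, is genuinely the delicate point. I would resolve it by replacing $\eta$ with the solution of the mixed problem $\Delta\eta=-\Delta(\chi Eq)+c$ with Neumann data $\partial_\nu\eta=0$ on $\dO$, interpreted weakly together with $\gr(\chi Eq)$. That is, I would directly solve for $\theta\in H^1(\Omega^c_B)$ with
\[
\int_{\Omega^c_B}\gr\theta\cdot\gr\psi\dx x=\io v\cdot\gr\tilde\psi+\di v\,\tilde\psi\dx x
\]
for all $\psi\in H^1(\Omega^c_B)$, where $\tilde\psi$ is an extension of $\psi$ to $\Omega$. To get $L^2$-control, the right-hand side must be rewritten as $\io v\cdot\gr\tilde\psi\dx x+\io\Delta q\,\tilde\psi\dx x=\io(v-\gr q)\cdot\gr\tilde\psi\dx x+\langle\text{terms in }\gr q\rangle$, and bounded by $\|v\|_{L^2}\|\psi\|_{H^1}$ using $\Delta q=\di v$. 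Verifying this bound, uniformly and independently of the chosen extension $\tilde\psi$, is where I expect the real work to lie.
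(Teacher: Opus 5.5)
Your first step is fine (with one compatibility constant per component of $\Omega^c_B$ if $\Omega$ has holes), and so is the splitting $v=\gr q+z$ with $z$ extended by zero. The final step, however, cannot be completed: the bound you hope to verify is false, not merely hard.

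By the Green formula defining $\tr_N$, your right-hand side $\io v\cdot\gr\tilde\psi+\di v\,\tilde\psi\dx x$ equals $\duality{\tr_N(v)}{\tr^{\dO}(\psi)}$, which is why it does not depend on $\tilde\psi$. Hence your $\theta$ is, up to sign and the compatibility constant, the Neumann solution from your first step, which you already saw is not $L^2$-bounded. The rewriting via $q$ does not help: $\io(v-\gr q)\cdot\gr\tilde\psi\dx x=\io z\cdot\gr\tilde\psi\dx x=0$, and the remaining terms are exactly $\duality{\partial_\nu q}{\tr^{\dO}(\psi)}=\duality{\tr_N(v)}{\tr^{\dO}(\psi)}$, whose control requires $\|\Delta q\|_{L^2}=\|\di v\|_{L^2}$.

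Concretely, suppose $\dO$ contains a flat piece $\{x_n=0\}$ with $\Omega$ locally below it, and take $v_\epsilon(x)=\chi(x')\max\{0,1+x_n/\epsilon\}e_n$ with a tangential cut-off $\chi$. Then $\|v_\epsilon\|_{L^2(\Omega)}\approx\epsilon^{1/2}\to0$ and $\|\di v_\epsilon\|_{L^2(\Omega)}\approx\epsilon^{-1/2}$, but $\tr_N(v_\epsilon)=\chi$, and therefore your $\theta$, do not depend on $\epsilon$ at all. The example also shows what any valid extension must do. Every exterior field $u$ with normal trace $-\chi$ on $\dO$ satisfies $\|\chi\|_{\hneh(\dO)}\lesssim\|u\|_{L^2(\Omega^c_B)}+\|\di u\|_{L^2(\Omega^c_B)}$. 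So an extension of $L^2$-size $O(\epsilon^{1/2})$ needs an exterior divergence at least of order one, i.e.\ a copy outside $\Omega$ of the boundary layer of $v_\epsilon$. Correctors with constant or zero divergence, such as your Neumann solution or your harmonic $\gr\eta$, can never provide this.

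So the missing idea is that the exterior divergence must be an $L^2$-bounded transplant of $\di v$ itself. The paper achieves this by a local construction:
\begin{enumerate}
\item a partition of unity and bi-Lipschitz flattening charts $\Phi_j$;
\item the Piola transform $\hat v=\det(D\Phi_j)(D\Phi_j)^{-1}v\circ\Phi_j^{-1}$;
\item a reflection across the flattened boundary (for $H^\di$ this must keep the normal component continuous, e.g.\ $\hat w=(-\hat v',\hat v_n)\circ\rho$ with $\rho(x',x_n)=(x',-x_n)$);
\item the inverse Piola transform.
\end{enumerate}
Each operation is multiplication by bounded functions or composition with bi-Lipschitz maps, hence $L^2$-bounded. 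Each also intertwines $\di$ up to bounded Jacobian factors and lower-order terms, hence is $H^\di$-bounded as well.

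If you prefer to keep your variational route, move the term $\io\di v\,\tilde\psi\dx x$ out of the Neumann data and into the exterior divergence. Fix a linear operator $R$ from functions on $\Omega^c_B$ to functions on $\Omega$ that preserves the trace on $\dO$ and is bounded on both $L^2$ and $H^1$ (e.g.\ Stein's extension operator for $\Omega^c_B$, restricted to $\Omega$). Then solve $\int_{\Omega^c_B}\gr\theta\cdot\gr\psi\dx x=-\io v\cdot\gr(R\psi)\dx x$ for all $\psi\in H^1(\Omega^c_B)$, adding locally constant source terms of size $\lesssim\|v\|_{L^2(\Omega)}$ for compatibility. This right-hand side is bounded by $\|v\|_{L^2(\Omega)}\|\psi\|_{H^1(\Omega^c_B)}$. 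Testing with $\psi\in C_c^\infty(\Omega^c_B)$ (for which $R\psi\in H^1_0(\Omega)$) and then with general $\psi$ shows two things: $\di\gr\theta=-R^*(\di v)$ up to those constants, and $\gr\theta$ has normal trace $-\tr_N(v)$ on $\dO$ and $0$ on $\partial B$. Patching with $v$ then yields the desired operator, and the Helmholtz splitting is no longer needed.
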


    The existence of an $H^\di$-extension operator is well-known, see, e.g., \cite{bedivan1996extension} and Theorem 3.1 in \cite{gopalakrishnan2012partial}. That such an operator can be constructed to be $L^2$-bounded is rarely explicitly stated.
    We thus present a strategy of how such an operator can be constructed.
    \begin{proof}[Sketch of proof]
    \begin{enumerate}[Step 1.]
    \item If $n=1$, the statement is trivial, e.g.,  one can reflect $v$ along $\dO$ and multiply it with a cut-off function.
    \item   For $n\geq 2$, we first note that every (strongly) Lipschitz domain in the sense of Definition \ref{def:lipschitz}.(i) is also weakly Lipschitz, i.e., for every $x\in \dO$, there is an open neighborhood $U_x\subset \R^n$ around $x$ and a bi-Lipschitz map 
    $\Phi_x:U_x\to (-1,1)^n$ with 
    \begin{align*}
       \Phi_x(\Omega\cap U_x) = (-1,1)^{n-1}\times (-1,0).
    \end{align*} For more information, we refer to Chapter 3 of \cite{hodgeonforms}.
    
        \item  Using the relative compactness of $\Omega$, cover $\partial \Omega$ by a finite number of open Lipschitz patches $\{U_j\}_{j=1}^N$ so that each admits a bi-Lipschitz homeomorphism $\Phi_j: U_j \to (-1,1)^n$ with 
        $$\Phi_j(\Omega\cap U_j) = (-1,1)^{n-1}\times (-1,0).$$  
        We may assume each $\Phi_j$ to be orientation-preserving, i.e., $\det( D\Phi_j )>0$ almost everywhere, by potentially post-composing $\Phi_j$ with the reflection
        $x\mapsto  (-x_1,x_2,\dots,x_n).$
        
        Additionally, choose a domain $U_0$ with
        $$\Omega\backslash \bigcup_{j=1}^N U_j \subset U_0 \subset \overline{ U_0} \subset \Omega.$$
        \item 
        Let $v \in H^\di(\Omega)$ be arbitrary and let $\{\chi_j\}_{j=0}^N$ be a smooth partition of unity on $\Omega$ with $\supp\chi_j \subset U_j, j = 0,\dots,N.$ Restrict $v$ to $U_j$ by setting $v_j \coloneq  v \chi_j\restrict{U_j\cap \Omega} \in H^\di(U_j\cap \Omega)$.
        \item 
        For $ j = 1,...,N$, construct the Piola transforms $$~~~\hat{v}_j  \coloneq  \det (D \Phi_j\circ \Phi_j\inv)\left(D \Phi_j\circ \Phi_j\inv\right)\inv v_j\circ \Phi_j\inv,$$ 
        which are known to satisfy $\hat{v}_j \in H^\di((-1,1)^{n-1}\times(-1,0))$ with 
        \begin{align*}
            &(\di (\hat{v}_j) )(x) =  \det (D \Phi_j(\Phi_j\inv x)) \cdot (\di (v_j))(\Phi_j\inv x),\\ &\|\hat{v}_j\|_{L^2((-1,1)^{n-1}\times(-1,0))} \approx \|v_j\|_{L^2(U_j\cap \Omega)}, \\&\|\di (\hat{v}_j)\|_{L^2((-1,1)^{n-1}\times(-1,0))} \approx \|\di( v_j)\|_{L^2(U_j\cap \Omega)}.
        \end{align*}
        For this, see Lemma 2.1.7 in \cite{boffi2013mixed}.
        \item 
        Symmetrically extend the Piola transforms to $(-1,1)^n $ by setting 
        $$\hat{w}_j(x) \coloneq  \left\{
            \begin{array}{ll}
            \hat{v}_j(x), & x_1 <0,\\
            \hat{v}_j(-x_1,x_2,\dots,x_n), & x_1>0.  \\
            \end{array} \right.$$
            
        \item Choose $w_0 = v_0$ and construct the inverse Piola transforms of the extensions $\hat{w}_j$, namely$$~~~w_j  \coloneq  \frac{1}{\det (D \Phi_j )}(D \Phi_j )\cdot \hat{w}_j \circ \Phi_j.$$ 
        For $j=1,...,N,$ the functions $w_j$ are then extensions of $v_j$ to $H^\di(U_j)$ with \begin{align*}
            &\|w_j\|_{L^2((-1,1)^{n-1}\times(-1,0))} \approx \|v_j\|_{L^2(U_j\cap \Omega)} \tn{ and} \\&\|\di (w_j)\|_{L^2((-1,1)^{n-1}\times(-1,0))} \approx \|\di (v_j)\|_{L^2(U_j\cap \Omega)}.
        \end{align*}
        Moreover, each $w_j$ has support compactly contained in $U_j.$
        \item Finally, the sought after $L^2$-bounded extension of $v$ is given by 
        \begin{align*}w(x) \coloneq  &\left\{
            \begin{array}{ll}
            \sum_{j=0}^N w_j(x), &  x \in \bigcup_{j=0}^N U_j\\
            0, & \tn{else}. \\
            \end{array} \right. \qedhere \end{align*}
        \end{enumerate}
    \end{proof}

\sectionnotoc{Acknowledgement}
This work was funded by the Deutsche Forschungsgemeinschaft (DFG, German Research Foundation) – Project-ID 531152215 – CRC 1701.

The author thanks Jochen Glück and  Birgit Jacob for their helpful comments and discussions.\\

\sectionnotoc{Declaration of generative AI and AI-assisted technologies in the manuscript preparation process.}
During the preparation of this work the author used Perplexity, ChatGPT, and Google Gemini for literature search and discussion. No new results of this paper were found, developed or written using generative AI.  \\

\bibliographystyle{plainurl}
\bibliography{literature}

\end{document}